\definecolor{webgreen}{rgb}{0,.5,0}
\definecolor{webbrown}{rgb}{.6,0,0}
\newcommand{\seqnum}[1]{\href{https://oeis.org/#1}{\underline{#1}}}
\begin{document}
	
	\theoremstyle{plain}
	\newtheorem{theorem}{Theorem}
	\newtheorem{corollary}[theorem]{Corollary}
	\newtheorem{lemma}[theorem]{Lemma}
	\newtheorem{proposition}[theorem]{Proposition}
	
	\theoremstyle{definition}
	\newtheorem{definition}[theorem]{Definition}
	\newtheorem{example}[theorem]{Example}
	\newtheorem{conjecture}[theorem]{Conjecture}
	
	\theoremstyle{remark}
	\newtheorem{remark}[theorem]{Remark}
	
	\begin{center}
	\end{center}
	
	\begin{center}
		\vskip 1cm{\LARGE\bf  On the Erd\H{o}s-Sloane and Shifted Sloane Persistence Problems\let\thefootnote\relax\footnote{This work has been supported by ``Projeto Tem\'atico Din\^amica em Baixas Dimens\~oes'' FAPESP Grant  2016/25053-8
			}\\
			\vskip 1cm}

		\large
		
		Gabriel Bonuccelli\\
		Lucas Colucci\\ 
		Edson de Faria\\

		\vspace{0.2cm}
		Instituto de Matem\'atica e Estat\'istica\\
		Universidade de S\~ao Paulo\\
		S\~ao Paulo\\ 
		Brazil\\
		\href{mailto:gabrielbhl@ime.usp.br}{\tt \{gabrielbhl,lcolucci,edson\}@ime.usp.br}
	\end{center}

	\vskip .2 in

	\begin{abstract}
		
		In this paper, we investigate two variations on the so-called persistence problem of Sloane: the shifted version, which was introduced by Wagstaff; and the nonzero version, proposed by Erd\H{o}s. We explore connections between these problems and a recent conjecture of de Faria and Tresser regarding equidistribution of the digits of some integer sequences and a natural generalization of it.
		
	\end{abstract}
	
	\section{Introduction}\label{intro}
	
	In 1973, Sloane \cite{sloane1973persistence} proposed the following question: given a positive integer $n$, multiply all its digits together to get a new number, and keep repeating this operation until a single-digit number is obtained. The number of operations needed is called the \emph{persistence} of $n$. Is it true that there is an absolute constant $C(b)$ (depending solely on the base $b$ in which the numbers are written) such that the persistence of every positive integer is bounded by $C(b)$? Despite many computational searches, heuristic arguments and related conjectures in favor of a positive answer, no proof or disproof of this conjecture has been found so far.

	Many variants of Sloane's problem have been considered as well. The famous book of Guy \cite{guy2013unsolved} mentions that Erd\H{o}s introduced the version of the Sloane problem wherein only the nonzero digits of a number are multiplied in each iteration, which we call the \emph{Erd\H{o}s-Sloane problem}. Another variant was raised by Diamond and Reidpath \cite{diamond1998counterexample}, where instead of the usual base $b$, numbers are taken to the so-called factorial base. Less related variations include the additive persistence (when the digits are summed up instead of multiplied) \cite{hinden1974additive} or versions where even more general functions of the digits are considered \cite{beardon1998sums, herzberg2014some}.
	
	In this paper, we will be concerned with the Erd\H{o}s-Sloane version and with the shifted Sloane problem, introduced by Wagstaff \cite{wagstaff1981iterating}, which consists in shifting all the digits of the number by a fixed positive integer before multiplying them.
	
	\section{Definitions and notation}\label{sec:def}
	
	For integers $t \geq 0$ and $b \geq 2$, the \emph{$t$-shifted Sloane map} in base $b$ is the map $S_{t,b}$ that takes a nonnegative integer $n = \sum_{i=0}^k d_ib^i$ (as usual, $0 \leq d_i \leq k-1$ for all $i$ and $d_k > 0$) to the integer $S_{t,b}(n)=\prod_{i=0}^k(d_i+t)$. This function was introduced (in the special case $b=10$) by Wagstaff \cite{wagstaff1981iterating}, motivated by a question of Erd\H{o}s and Kiss. Note that $t=0$ corresponds to the map that we iterate in the original persistence problem. Furthermore, the \emph{Erd\H{o}s-Sloane map} in base $b$, denoted by $S_b^*$, is the map that sends $n = \sum_{i=0}^k d_ib^i$ to $S_b^*(n)=\prod_{0 \leq i \leq k, d_i \neq 0 }d_i$. The set of nonnegative integers is denoted by $\mathbb{N}$. We use the notation $f^k(n)$ to denote the $k$-th iterate of a map $f$ on the point $n$, i.e., $f^0(n)=n$ and $f^{k}(n)=f(f^{k-1}(n))$ for every $k \geq 1$, and we let $\mathrm{Per}(f)$ denote the set of periodic points of $f$. Finally, for $0 \leq d \leq b-1$ and an integer $n$, we let \emph{$\#d(n)_b$} denote the number of digits $d$ in the expansion of $n$ in base $b$ (e.g., $\#2(100_{10})_3 = 1$, since $100_{10} = 10201_3$), and $\#(n)_b$ denote the number of digits of the base-$b$ expansion of $n$. The subscripts $b$ will be omitted when clear from the context.
	
	We are interested in the dynamics of $S_{t,b}$ and $S_b^*$. Contrarily to the original problem ($t=0$) and to $S_b^*$, for $t \geq 1$ it is not even clear that every $n$ reaches a fixed point or a cycle of $S_{t,b}$. If it does, the smallest number of iterations to reach it will be called, as usual, the \emph{persistence} of $n$, and it will be denoted, respectively, by $\nu_{t,b}(n)$ and $\nu_b^*(n)$ (we set those values to $\infty$ in case the corresponding sequence of iterates diverges). Even the basic question of whether $\nu_{t,b}(n)$ is finite for every $n$ is not so readily answered for many values of $t$ and $b$, and it is open for most of them. On the other hand, it is easy to see that $S_{b,b}(n) > n$ holds for every $b$ and $n$, so $\nu_{t,b}(n) = \infty$ for every $n$ whenever $t \geq b$. Thus, from now on, we will assume that $t \leq b-1$ unless stated otherwise.    
	
	\section{Questions}\label{sec:questions}
	
	For every $b \geq 2$ and $0 \leq t \leq b-1$, one defines the $t$-shifted Sloane problem in base $b$ as in the previous section. We will refer to this problem as the $(t,b)$ problem for short. Furthermore, for every $b \geq 2$, we may consider the Erd\H{o}s-Sloane problem in base $b$, and, similarly, we will refer to this problem as the $(*,b)$ problem.
	
	For each of the $(t,b)$ and $(*,b)$ problems, there are different questions one may ask about the corresponding iterating map $f=S_{t,b}$ (or $f=S_b^*$).	
	\begin{enumerate}
		
		\item The most basic question is the following: let $A_f$ denote the set of nonnegative integers $n$ such that the sequence of iterates $(f^k(n))_{k \geq 0}$ stabilizes (i.e., reaches either a cycle or a fixed point of $f$). What can be said about $A_f$? It is trivial that $A_f=\mathbb{N}$ in the original problem ($t=0$) and in the Erd\H{o}s-Sloane problem. We will prove (Theorem \ref{thm:f1bconverges}) that $A_f=\mathbb{N}$ for $t=1$ and $b \geq 2$ as well (extending a result of Wagstaff \cite{wagstaff1981iterating}). Furthermore, we will prove that some natural conjectures on the equidistribution of digits of some sequences imply that $A_f=\mathbb{N}$ for some pairs $(t,b)$ and that $\mathbb{N}-A_f$ contains all sufficiently large integers for other pairs $(t,b)$, but our results do not cover all the range of $(t,b)$ (Theorems \ref{thm:f23converges}, \ref{thm:45diverges}, \ref{thm:b/4converges}, \ref{thm:tsmallconverges} and \ref{thm:divergencelarge}).
		
		\item In case $A_f=\mathbb{N}$, i.e., every $n \in \mathbb{N}$ stabilizes under iteration by $f$, is there a universal constant that bounds the persistence of all numbers, that is, is there $C$ such that $\nu_{t,b}(n) \leq C$ (or $\nu_{b}^*(n) \leq C$) for every $n \in \mathbb{N}$? Note that the positive answer to the question for $t=0$ is the original Sloane conjecture. Perhaps surprisingly, we prove that the equidistribution conjectures imply a negative answer for $t=1$ and for the Erd\H{o}s-Sloane problem, which shows a pronounced difference in the behaviors of the $(0,b)$ problem and the $(1,b)$ and $(*,b)$ problems (Theorems \ref{thm:erdosbase3}, \ref{thm:perserdos}, \ref{thm:t1b3pers} and \ref{thm:perst1}).
		
		\item Still assuming $A_f=\mathbb{N}$, we know that every integer $n$ reaches either a fixed point or a cycle of $f$. What are the cycles and the fixed points of $f$? Besides the trivial cases $(0,b)$ and $(*,b)$, we are able to describe them precisely in the case $t=1$ for any $b \geq 3$ (Theorem \ref{thm:f1bconverges}).
		
		\item Finally, the most detailed question we deal with is the following: for a cycle (or fixed point) $C$ of $f$, which integers $n$ tend to an element of $C$ under iteration by $f$? That is, what are the backward orbits of each cycle $C$? We can answer this question precisely only in the case $t=1$ and $b=3$ (Theorem \ref{t1b3cycles}).
		
	\end{enumerate}
	
	\section{Equidistribution of digits in products of primes}

	If an integer $n$ contains a digit zero in its base-$b$ expansion, then $S_{0,b}(n) = 0$; otherwise, it is a product of positive digits in base $b$, i.e., the integers from 1 to $b-1$. This simple fact implies that almost all integers have persistence equal to one, in the sense that the number of integers up to $N$ having this property is asymptotically equal to $N$ when $N \to \infty$. Furthermore, it implies that, when considering the dynamics of $S_{0,b}$ (or $S_b^*$), it is frequently enough to deal with products of integers less than $b$, i.e., products of power of primes smaller than $b$. Similarly, for $S_{t,b}$,  it is enough to consider products of integers between $t$ and $t+b-1$. Based on strong computational evidence and heuristic models, de Faria and Tresser \cite{de2014sloane} proposed a conjecture that states, in particular, that some sequences of this kind of numbers have a very regular asymptotic distribution of digits. Before stating their conjecture, we introduce one more definition: given $\varepsilon > 0$, a number $n$, and a base $b$, we say that the digits of $n$ are $\varepsilon$-equidistributed (in base $b$) if, for every digit $d \in \{0,\dots,b-1\}$, we have $|\frac{\#d(n)_b}{\#(n)_b}-\frac{1}{b}|<\varepsilon$.	
	\begin{conjecture}[de Faria, Tresser \cite{de2014sloane}]\label{conj:equidist}
		Given an integer $q > 1$, a finite set of primes $F$ that does not contain all the primes dividing $q$, and a positive integer $a$, let $(N_i)_{i\geq 0}$ be a sequence of integers such that $N_0 = a$ and, for every $k \geq 0$, $N_{k+1} = N_k \cdot p_k$, where $p_k \in F$. Then the digits $\{0,\dots,q-1\}$ are asymptotically equidistributed when $n \to \infty$ in the base-$q$ expansion of the $N_i$. That is, given $\varepsilon >0$, there is $n_0$ such that $N_n$ is $\varepsilon$-equidistributed in base $q$ for every $n \geq n_0$.
	\end{conjecture}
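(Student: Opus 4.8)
This is a conjecture, and even its simplest special case---that the base-$10$ digits of $3^{n}$ become asymptotically equidistributed---appears to lie beyond current technology; so the plan below is really a description of the heuristic behind it and of the shape a proof would have to take, with the main obstacle flagged at the end. The first step is to reformulate the digit count as an equidistribution statement. Writing $N_{n}=\sum_{j=0}^{L_{n}}d_{j}q^{j}$ with $L_{n}=\#(N_{n})-1=\log_{q}N_{n}+O(1)$, one has $d_{j}=d$ precisely when $\{N_{n}q^{-j-1}\}\in[d/q,(d+1)/q)$, where $\{\cdot\}$ denotes fractional part; hence
\[
\#d(N_{n})\;=\;\#\Bigl\{\,0\le j\le L_{n}\;:\;\{N_{n}q^{-j-1}\}\in\tfrac1q[d,\,d+1)\,\Bigr\},
\]
and Conjecture~\ref{conj:equidist} is equivalent to the assertion that, as $n\to\infty$, the $L_{n}+1$ numbers $\{N_{n}q^{-1}\},\dots,\{N_{n}q^{-L_{n}-1}\}$ become equidistributed in $[0,1)$ at the scale of the partition into $q$ equal subintervals. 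The hypothesis that $F$ omits some prime $p^{\ast}\mid q$ is exactly what prevents degeneracy: it forces the exponent of $p^{\ast}$ in the factorization of $N_{n}$ to stay constant (equal to its exponent in $a$), so $N_{n}$ is never divisible by a large power of $q$---contrast the excluded configuration $q=10$, $F=\{2,5\}$, where $N_{n}$ can equal a power of $10$ and the digits are as far from equidistributed as possible. Already the single-prime case $N_{n}=ap^{n}$ with $p\in F$ contains the essential difficulty: letting the $p_{k}$ vary in $F$ merely makes the sequence more generic.

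The natural way to establish the equidistribution is Weyl's criterion applied in the $j$-variable: I would try to show that for every fixed integer $h\ge1$,
\[
\frac{1}{L_{n}}\sum_{j=0}^{L_{n}}e\!\left(h\,N_{n}q^{-j-1}\right)\longrightarrow0\qquad(n\to\infty),\qquad e(x):=e^{2\pi i x},
\]
together with the analogous estimates for finitely many consecutive digit positions, which would upgrade the conclusion to a normality-type statement about blocks of digits. For each $n$ the inner sum is a sum of $q$-power roots of unity governed by the residues $N_{n}\bmod q^{m}$, $m\le L_{n}$, so getting cancellation here is precisely the claim that the single, multiplicatively structured integer $N_{n}=a\prod_{k<n}p_{k}$ has pseudorandom base-$q$ digits. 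This is the step I expect to be the real obstacle, and a serious one: it lies in the same circle of problems as the normality of $2^{n}$, the normality of $\log 2$, and Erd\H{o}s's conjecture that $2^{n}$ has a digit $2$ in base $3$ for every $n>8$---none of which is known.

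What can be extracted unconditionally is much weaker and only indicates why the conjecture is believed. First, the \emph{leading} digits already obey Benford's law: since any $p$ with a prime factor outside the support of $q$ has $\log_{q}p$ irrational, the classical equidistribution of $(\{n\log_{q}p\})_{n\ge0}$ controls the top few digits of $N_{n}$. Second, one can presumably obtain metric or density versions---e.g.\ that the number of $n\le N$ for which $N_{n}$ fails to be $\varepsilon$-equidistributed is $o(N)$, or even $O(N^{1-\delta})$---by bounding, for each fixed scale $m$, how often $N_{n}\bmod q^{m}$ lands in a ``missing-digit'' residue class, using that multiplication by a $p\in F$ acts on $\mathbb{Z}/q^{m}\mathbb{Z}$ with large multiplicative order. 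Bridging the gap between such partial results and the stated conclusion---$\varepsilon$-equidistribution of \emph{every} $N_{n}$ for all $n$ beyond some $n_{0}$, uniformly over digit positions up to $L_{n}\asymp n\log_{q}p$---is exactly what makes Conjecture~\ref{conj:equidist} hard.
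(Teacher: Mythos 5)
The statement you were asked about is Conjecture~\ref{conj:equidist}: the paper offers no proof of it, explicitly attributes it to de Faria and Tresser, and in the remark immediately following it points out that even its simplest instance (equidistribution of the ternary digits of $2^n$) is open, as is the weaker Erd\H{o}s conjecture that $2^n$ eventually always contains a digit $2$ in base $3$. Your proposal correctly recognizes this and is honest that it is a heuristic outline rather than a proof, so there is no paper argument to compare it against.

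As a description of the problem your sketch is accurate and well-calibrated: the reformulation of $\#d(N_n)$ as an equidistribution statement for the fractional parts $\{N_n q^{-j-1}\}$ over digit positions $j$ is correct, the role of the excluded prime of $q$ (preventing $N_n$ from acquiring large powers of $q$ and hence trailing zeros) is exactly the right reading of the hypothesis, and you correctly locate the obstruction in the Weyl-sum cancellation for a single multiplicatively structured integer --- which is indeed the same circle of open problems the paper cites. The unconditional fragments you mention (Benford-type behavior of leading digits, density/counting bounds on exceptional $n$) are consistent with what is known; in particular the paper's remark after Theorem~\ref{t1b3cycles} cites Narkiewicz's bound of $1.62N^{\log_3 2}$ on the number of $n\le N$ with $\#1(2^n)_3=0$, which is precisely a result of the ``metric/density'' type you predict. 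In short: no gap to report, because you did not claim a proof; just be aware that in this paper the conjecture functions purely as a hypothesis from which the conditional theorems are derived.
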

	This form of the conjecture stems from an earlier one that arose in discussions between C.\ Tresser and G.\ Hentchel \cite[p.~381]{de2014sloane}.
	\begin{remark}
		The full version of Conjecture \ref{conj:equidist} also states that the equidistribution holds for blocks of consecutive digits of any length $l > 0$, i.e., given a block of $l$ digits, its proportion in the base-$q$ expansion of the numbers in the sequences $(N_i)_{i\geq 0}$ is asymptotically equal to $\frac{1}{q^l}$.
	\end{remark}	
	\begin{remark}
		Although Conjecture \ref{conj:equidist} seems very natural, even its simplest instances are not known to be true. For instance, it is not known whether the sequence $(2^n)_{n \geq 0}$ is asymptotically equidistributed in base $3$. Indeed, even the old conjecture of Erd\H{o}s \cite{erdos1979some} that states that all but finitely many terms of this sequence contain a digit two in its ternary expansion is still open.
	\end{remark}	
	Although Conjecture \ref{conj:equidist} is enough to prove results in base $3$ (and, in some cases, base $4$), for larger bases one needs a uniform, or ``multidimensional'' generalized version, which we now state.	
	\begin{conjecture}[Uniform generalization of Conjecture \ref{conj:equidist}]\label{conj:uniformdistr}
		Let $q>1$ be an integer, $F=\{p_1,\dots,p_k\}$ be a finite set of primes that does not contain all the primes dividing $q$, 
		and $a$ be a positive integer. Then, for every $\varepsilon > 0$, 
		there exists $N$ such that $a\prod_{i=1}^k p_i^{\alpha_i}$ is $\varepsilon$-equidistributed in base $q$ whenever $\alpha_i \geq N$ for some $i \in \{1,\dots,k\}$.
	\end{conjecture}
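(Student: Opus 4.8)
Since Conjecture \ref{conj:uniformdistr} is itself only conjectural, the natural target is to show that it is no stronger than Conjecture \ref{conj:equidist} --- indeed, equivalent to it --- thereby justifying its use as a working hypothesis in the theorems that follow. I would prove the two implications separately, starting with the easy one. Given any admissible sequence $(N_i)$ for Conjecture \ref{conj:equidist}, say $N_0=a$ and $N_{n+1}=N_np_n$ with $p_n\in F=\{p_1,\dots,p_k\}$, write $N_n=a\prod_{i=1}^k p_i^{c_i(n)}$, where $c_i(n)$ is the number of indices $m<n$ with $p_m=p_i$; then $\sum_i c_i(n)=n$, so $\max_i c_i(n)\ge n/k$. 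Hence, given $\varepsilon>0$, the threshold $N$ supplied by Conjecture \ref{conj:uniformdistr} (for the same $q$, $F$, $a$, $\varepsilon$) forces $N_n$ to be $\varepsilon$-equidistributed once $n\ge kN$. So Conjecture \ref{conj:uniformdistr} implies Conjecture \ref{conj:equidist}.

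For the converse --- the substantive direction --- I would argue by contradiction. Suppose Conjecture \ref{conj:uniformdistr} fails for some $q$, $F=\{p_1,\dots,p_k\}$, $a$ and $\varepsilon>0$. Then for each $N\ge 1$ there is a tuple $\alpha\in\mathbb{N}^k$ with $\max_i\alpha_i\ge N$ for which $a\prod_i p_i^{\alpha_i}$ is not $\varepsilon$-equidistributed in base $q$; taking $N=1,2,3,\dots$ produces a sequence of tuples $(\alpha^{(j)})_{j\ge1}$ with $\max_i\alpha_i^{(j)}\to\infty$ such that $M_j:=a\prod_i p_i^{\alpha_i^{(j)}}$ is never $\varepsilon$-equidistributed. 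By Dickson's lemma --- the statement that $(\mathbb{N}^k,\le)$ under the coordinatewise order is a well-quasi-order, which for $\mathbb{N}^k$ follows from a short induction on $k$ --- the sequence $(\alpha^{(j)})$ has an infinite coordinatewise non-decreasing subsequence $\alpha^{(j_1)}\le\alpha^{(j_2)}\le\cdots$, and since $\max_i\alpha_i^{(j)}\to\infty$ we may, after passing to a further subsequence, assume that $s_l:=\sum_i\alpha_i^{(j_l)}$ is strictly increasing.

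The crux is that an infinite coordinatewise-increasing chain of exponent tuples can be threaded by a single multiplication path, because $M_{j_{l-1}}\mid M_{j_l}$ for all $l$. Concretely, define $(N_n)$ by $N_0=a$, reach $M_{j_1}$ from $a$ by multiplying by each $p_i$ exactly $\alpha_i^{(j_1)}$ times (in any order), and for $l\ge 2$ reach $M_{j_l}$ from $M_{j_{l-1}}$ by multiplying by each $p_i$ exactly $\alpha_i^{(j_l)}-\alpha_i^{(j_{l-1})}\ge0$ times. This is a legitimate admissible sequence for Conjecture \ref{conj:equidist} with the same $q$, $F$ (the hypothesis that $F$ omits some prime divisor of $q$ is inherited) and $a$; moreover $M_{j_l}=N_{s_l}$ and $s_l\to\infty$. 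Applying Conjecture \ref{conj:equidist} with our $\varepsilon$ gives an $n_0$ with $N_n$ $\varepsilon$-equidistributed for all $n\ge n_0$, so picking $l$ with $s_l\ge n_0$ makes $M_{j_l}=N_{s_l}$ $\varepsilon$-equidistributed, contradicting its choice. This yields the reduction, hence the equivalence of Conjectures \ref{conj:equidist} and \ref{conj:uniformdistr}.

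The honest assessment is that this reduction is essentially all one can say: the combinatorial core (Dickson's lemma plus the path-threading) is soft, and the real obstacle --- an unconditional proof of either conjecture --- lies entirely in analytic number theory and is out of reach. Already the simplest instance, equidistribution of the base-$3$ digits of $2^n$, is open, and even the far weaker assertion that $2^n$ contains the digit $2$ in base $3$ for all large $n$ is a well-known unsolved problem of Erd\H{o}s; so the proposal above should be understood as showing Conjecture \ref{conj:uniformdistr} to be exactly as strong as Conjecture \ref{conj:equidist}, which is precisely what licenses using the uniform form in the sequel.
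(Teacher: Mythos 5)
The statement you were asked about is a conjecture: the paper offers no proof of it (and none is currently possible, since even its simplest instance, the equidistribution of the ternary digits of $2^n$, is an open problem). You correctly recognize this and instead prove something the paper does not claim: that Conjecture \ref{conj:uniformdistr} is \emph{equivalent} to Conjecture \ref{conj:equidist}, so that the ``uniform generalization'' is not actually stronger. Your argument is correct. The easy direction is fine: along a multiplication path $N_n=a\prod_i p_i^{c_i(n)}$ one has $\sum_i c_i(n)=n$, hence $\max_i c_i(n)\ge n/k$, which eventually exceeds the uniform threshold $N$. For the substantive direction, the key observation is that Conjecture \ref{conj:uniformdistr} (for fixed $q$, $F$, $a$, $\varepsilon$) is exactly the assertion that only finitely many exponent tuples $\alpha\in\mathbb{N}^k$ yield a non-$\varepsilon$-equidistributed number, since the complement $\{\alpha:\max_i\alpha_i<N\}$ is finite; by Dickson's lemma an infinite set of bad tuples contains an infinite coordinatewise-increasing chain, and since consecutive members of such a chain differ by multiplication by primes of $F$, the chain can be interpolated into a single admissible sequence for Conjecture \ref{conj:equidist} that fails to be eventually $\varepsilon$-equidistributed --- a contradiction. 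The details you supply (coordinate-by-coordinate extraction of the monotone subsequence, the divisibility $M_{j_{l-1}}\mid M_{j_l}$, the fact that $s_l\to\infty$ so the threaded path is genuinely infinite and hits the bad values at arbitrarily late indices) all check out. This is a genuinely useful remark that the paper does not make: the authors present Conjecture \ref{conj:uniformdistr} as an apparently stronger hypothesis needed for bases $b\ge 5$, whereas your reduction shows that every result stated as conditional on Conjecture \ref{conj:uniformdistr} could equally be stated as conditional on Conjecture \ref{conj:equidist} alone. Just be explicit that the equivalence is instance-by-instance (fixed $q$, $F$, $a$), which is all the applications require, and --- as you already do --- that what you have produced is a proof of relative strength, not a proof of the conjecture itself.
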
	
	
	\section{Main results}
	
	\subsection{Erd\H{o}s-Sloane problem}
	
	First, we consider the Erd\H{o}s-Sloane version. It is trivial that the only periodic points of the Erd\H{o}s-Sloane map in base $b$ are the fixed points $1,2,\dots,b-1$. As for the persistence of a number, we prove that Conjectures \ref{conj:equidist} and \ref{conj:uniformdistr} imply that the analog of Sloane's conjecture does not hold in this context.
	\begin{theorem}\label{thm:erdosbase3}
		Conjecture \ref{conj:equidist} implies that for both $S^*_3$ and $S^*_4$, there are integers with arbitrarily large persistence. Moreover, Conjecture \ref{conj:uniformdistr} implies the same result for $S_b^*$, for every $b \geq 5$.
	\end{theorem}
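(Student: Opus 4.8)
The plan is to prove the stronger statement that, for every $b\ge 3$, a suitable one‑parameter family of integers has persistence growing logarithmically in its size; arbitrarily large persistence follows at once (recall $A_{S_b^*}=\mathbb N$, so every $\nu_b^*$ is finite). Write $b=\prod_{q\mid b}q^{e_q}$ and let $\Lambda_b$ be the set of products of prime powers $<b$; since $S_b^*(\mathbb N)\subseteq\Lambda_b$, every orbit eventually lies in $\Lambda_b$, and the examples I would produce are powers of a single prime $<b$. For $M\in\Lambda_b$ let $\phi(M)$ be $M$ with all of its base‑$b$ trailing zeros deleted; because a trailing zero contributes the factor $1$ to the digit product, $S_b^*(M)=S_b^*(\phi(M))$, so along an orbit $M^{(j+1)}=S_b^*(M^{(j)})$ everything is governed by the digits of the truncations $\phi(M^{(j)})$. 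The argument rests on three steps: (i) deducing from the conjectures that every $M\in\Lambda_b$ with $\#(\phi(M))$ large is $\varepsilon$‑equidistributed; (ii) a growth estimate showing $S_b^*$ shrinks $\#(\phi(\cdot))$ by at most a fixed factor $<1$ when its argument is $\varepsilon$‑equidistributed; (iii) an elementary iteration.

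For step (i) the structural point is that $\phi(M)$ always has the form $a\cdot\prod_{p\in F}p^{\alpha_p}$ with $F=\{\text{primes}<b\}\setminus\{q_0\}$ for some prime $q_0\mid b$ and $a\in\{1,q_0,\dots,q_0^{e_{q_0}-1}\}$: the number of trailing zeros of $M$ is $\min_{q\mid b}\lfloor v_q(M)/e_q\rfloor$, so after deleting them some prime $q_0\mid b$ has $q_0$‑adic valuation $<e_{q_0}$. Such an $F$ fails to contain all the primes dividing $b$, so the equidistribution conjectures apply to the relevant sequences. Since there are only finitely many pairs $(q_0,a)$, taking the maximum of the finitely many thresholds from Conjecture~\ref{conj:uniformdistr} produces $L_0=L_0(\varepsilon,b)$ with: $\phi(M)$ is $\varepsilon$‑equidistributed whenever $\#(\phi(M))\ge L_0$. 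For $b=3$ we have $\phi(M)=M=2^m$ and $F=\{2\}$; for $b=4$ we have $\phi(M)\in\{3^{\beta},\,2\cdot 3^{\beta}\}$ and $F=\{3\}$. In those two cases $F$ is a singleton, so the weaker Conjecture~\ref{conj:equidist} already suffices — that is precisely why bases $3$ and $4$ get the cleaner hypothesis. For $b\ge 5$ the set $F$ always has at least two elements, which is what forces the multidimensional Conjecture~\ref{conj:uniformdistr}.

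For step (ii), suppose $\phi(M)$ is $\varepsilon$‑equidistributed of length $\ell$. Then each nonzero digit $d$ occurs $(\tfrac1b\pm\varepsilon)\ell$ times, so $S_b^*(M)=\prod_{d=2}^{b-1}d^{\#d(\phi(M))}$ has length $(\tfrac1b\pm\varepsilon)\,\ell\log_b((b-1)!)+O(1)$ and $v_q(S_b^*(M))=(\tfrac1b\pm\varepsilon)\,\ell\,v_q((b-1)!)$; in particular $S_b^*(M)$ has at most $(\tfrac1b+\varepsilon)\,\ell\,\min_{q\mid b}v_q((b-1)!)/e_q$ trailing zeros, whence
\[
\#\!\big(\phi(S_b^*(M))\big)\ \ge\ \frac{\ell}{b}\Big(\log_b\big((b-1)!\big)-\min_{q\mid b}\frac{v_q((b-1)!)}{e_q}\Big)-O(\varepsilon\ell)-O(1).
\]
The bracketed quantity is strictly positive for every $b\ge 3$: it equals $\log_b((b-1)!)>0$ when $b$ is prime, and when $b$ is composite one checks $\log_b((b-1)!)>\min_{q\mid b}v_q((b-1)!)/e_q$ directly for the small values and, for large $b$, via Legendre's formula together with Stirling's estimate — heuristically the left side is $\sim(b-1)(1-1/\ln b)$, far exceeding the right side, which is at most $v_{q^*}((b-1)!)\sim(b-1)/(q^*-1)$ for the largest prime factor $q^*$ of $b$. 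Fixing $\varepsilon$ small, this yields a constant $c_b\in(0,1)$ and a threshold $L_1\ge L_0$ with $\#(\phi(S_b^*(M)))\ge c_b\,\#(\phi(M))$ whenever $\#(\phi(M))\ge L_1$.

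For step (iii) I would pick a prime $p_0<b$ with $p_0\nmid b$ — say a prime in $(b/2,b)$ from Bertrand's postulate, or $p_0=2$ when $b$ is odd — and run the orbit from $M^{(0)}=p_0^m$, whose base‑$b$ expansion has no trailing zeros, so $\kappa_0:=\#(\phi(M^{(0)}))=\#(p_0^m)\to\infty$ as $m\to\infty$. With $\kappa_j:=\#(\phi(M^{(j)}))$, steps (i)–(ii) give $\kappa_{j+1}\ge c_b\kappa_j$ as long as $\kappa_j\ge L_1$, and while $\kappa_j\ge L_1\,(\ge 2)$ the integer $M^{(j)}$ has more than one digit and hence is not a fixed point (the fixed points being $1,\dots,b-1$). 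Since $c_b<1$, an immediate induction gives $\kappa_j\ge L_1$ for all $j\le \log(\kappa_0/L_1)/\log(1/c_b)$, and for those $j$ the orbit has not yet stabilized; hence $\nu_b^*(p_0^m)\ge \log(\kappa_0/L_1)/\log(1/c_b)\to\infty$ with $m$. The main obstacle, i.e. the only non‑routine point, is the strict inequality of step (ii), $\log_b((b-1)!)>\min_{q\mid b}v_q((b-1)!)/e_q$ — this is exactly what prevents the repeated deletion of trailing zeros from collapsing the length too quickly — together with the bookkeeping in step (i) for composite $b$, where the requirement that $F$ omit a prime dividing $b$ is what makes the passage to $\phi(M)$ unavoidable.
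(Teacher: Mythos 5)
Your argument is correct and follows the same overall strategy as the paper's proof: the same starting integers (high powers of a prime $p_0<b$ with $p_0\nmid b$, supplied by Bertrand's postulate), the same finitely many applications of the conjectures to numbers of the form $a\prod_{p\in F}p^{\alpha_p}$ with $F$ omitting a prime divisor of $b$ (which is exactly why Conjecture~\ref{conj:equidist} suffices when $F$ is a singleton, i.e., for $b=3,4$, and Conjecture~\ref{conj:uniformdistr} is needed for $b\ge 5$), and the same iteration showing that a size measure of the orbit decays at most geometrically, hence survives for $\gtrsim\log\log n$ steps. Where you genuinely differ is in the choice of size measure and hence in the key quantitative lemma. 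The paper tracks the exponent of a single prime $p_i\nmid b$ across iterates: since such an exponent cannot be absorbed into the factored-out powers and divisors of $b$, it passes unchanged from $S_b^*(M)$ to the zero-stripped number, and equidistribution alone yields the recursion $\alpha\mapsto \tfrac{1}{2b}(\log_b 2)\,\alpha$ with no further input. You instead track the digit-length of $\phi(M)$, which forces you to subtract the trailing zeros of $S_b^*(M)$ and therefore to establish the inequality $\log_b((b-1)!)>\min_{q\mid b}v_q((b-1)!)/e_q$. That inequality is true (trivially for prime $b$, by direct computation for small composite $b$, and by Stirling together with Legendre's formula for large $b$, as you indicate), but it is an extra verification that the paper's bookkeeping avoids entirely; in exchange, your decay constant $c_b$ exploits all $b-2$ nontrivial digits rather than only the digit $2$, your single argument treats all bases uniformly, and it delivers the lower bound of Theorem~\ref{thm:perserdos} as an immediate corollary. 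If you write this up, make explicit the implication from $\#(\phi(M))\ge L_0$ to ``some $\alpha_p\ge N$'' (it holds because $|F|<b$ forces $\max_p\alpha_p\ge(\#(\phi(M))-2)/b$), and include the finite check of your factorial inequality for small composite $b$, since that is the one nonroutine step of your route.
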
	
	\begin{proof}
		We divide the proof into three cases: $b=3$, $b=4$ and $b \geq 5$.		
		\begin{enumerate}[(i)]
			\item $\textit{Base 3}$
			
			Let $f$ denote $S^*_3$ for ease of notation. Taking $q=3$, $F=\{2\}$ and $a=1$ in Conjecture \ref{conj:equidist}, we get the following statement: for every $\varepsilon > 0$, there is $N$ such that the proportion of digits $d$ ($d=0,1,2$) in $2^n$ is in $(1/3-\varepsilon,1/3+\varepsilon)$ whenever $n \geq N$.
			
			Take $\varepsilon = 1/6$. There is $N$ such that the proportion of digits $d$ ($d=0,1,2$) in $2^n$ is in $(1/6,1/2)$ whenever $n \geq N$. Take $m$ such that $m(1/6)^{t-1}(\log_3{2})^{t-1} \geq \max\{N,3\}$ and consider the integer $n=2^m$. We claim that $\nu_3^*(n) \geq t$. 
			
			As $m = m(1/6)^0(\log_3{2})^0 \geq \max\{N,3\}$, we know that $2^m$ has at least $1/6$ of its digits equal to 2. Thus, $f(m) = 2^{m_1}$, where $m_1 \geq (1/6)\log_3{2^m} = m(1/6)\log_3{2}$. As $m(1/6)\log_3{2} \geq \max\{N,3\}$, we know that the persistence of $m$ is at least two and that $2^{m_1}$ has at least 1/6 of its digits equal to 2. Thus, $f(f(2^m))=f(2^{m_1})=2^{m_2}$, where $m_2\geq 2^{m(1/6)^2(\log_3{2})^2}$. Inductively, we get that $f^{t-1}(2^m)=2^{m_{t-1}}$, where $m_{t-1}\geq 2^{m(1/6)^{t-1}(\log_3{2})^{t-1}} \geq 3$: indeed, if $f^{i}(2^m) = 2^{m_i}$ with $m_i \geq m(1/6)^{i}(\log_3{2})^{i}$, we have that $f^{i}(2^m)$ is $\varepsilon$-equidistributed, since $m(1/6)^{i}(\log_3{2})^{i} \geq N$, and then at least $1/6$ of its digits are equal to $2$. This implies that $f^{i+1}(2^m) = 2^{m_{i+1}}$, where $m_{i+1} \geq (1/6)\log_3{f^{i}(2^m)} \geq m(1/6)^{i+1}(\log_3{2})^{i+1}$, and completes the induction. Hence, $f^{t-1}(2^m) \geq 2^{m(1/6)^{t-1}(\log_3{2})^{t-1}} \geq 3$, which means that $n=2^m$ has persistence at least $t$.
			
			\item $\textit{Base 4}$
			
			In this case, we apply Conjecture \ref{conj:equidist} twice, with $q=4$, $F={3}$, $a=1$; and $q=4$, $F={3}$, $a=2$, respectively, to get that for every $\varepsilon > 0$ there is $N$ such that $3^m$ and $2\cdot 3^m$ are $\varepsilon$-equidistributed whenever $m \geq N$. Noting that, for every $a$ and $b$, we have $f(2^a\cdot3^b)$ is either equal to $f(3^b)$ or $f(2\cdot 3^b)$ and applying the same argument as in item (i) with $\varepsilon=1/8$, one can show that $\nu_4^*(3^m) \geq t$ if $m(1/8)^{t-1}(\log_3 4)^{t-1} \geq \max\{N,4\}$.
			
			\item $\textit{Larger bases}$
			
			For $b \geq 5$, let $F$ be the set of primes smaller than $b$ and $F_r=F-\{r\}$. We apply Conjecture \ref{conj:uniformdistr} for each prime divisor $r$ of $b$ and each proper divisor $d$ of $b$, with $F_r$ being the set of primes considered, $q=b$ and $a=d$. Note that, by Bertrand's postulate (which states that, for every $n>1$, there is a prime $p$ such that $n<p<2n$), there is some prime $q$ such that $(b-1)/2<q<b$, which means that $q$ does not divide $b$. Taking the maximum of the $N$ given in each application of the conjecture with a fixed $\varepsilon > 0$, we get the following statement: for every $\varepsilon > 0$, there is $N$ such that $d\prod_{p_i \in F_r}p_i^{\alpha_i}$ is $\varepsilon$-equidistributed whenever $\alpha_i \geq N$ for some $i \in \{1,\dots,k\}$, where $d$ is a proper divisor of $b$ and $r$ is a prime divisor of $b$.
			Moreover, note that, for every $n$, one has $f(bn)=f(n)$, since the expansion of $bn$ and $n$ in base $b$ differ only by one digit $0$.
			
			We claim that $\nu*_{b}(n) \geq t$, where $n=(\prod_{p_i \in F, p_i\nmid b}p_i)^m$ and $m$ satisfies the inequality $m(1/(2b))^{t-1}(\log_b 2)^{t-1} \geq \max\{N,b\}$, where $N$ is the integer given by the applications of Conjecture \ref{conj:uniformdistr} as above with $\varepsilon = 1/(2b)$.
			As $m \geq N$, $n$ is $\varepsilon$-equidistributed, whence, using a rough bound $(\prod_{p_i \in F, p_i\nmid b}p_i)^m \geq 2^m$, we have $f(n)=\prod_{p_i \leq b \text{ prime }}p_i^{\beta_i}$, with $\beta_i \geq (1/b-\varepsilon)\#(n)_b \geq (1/(2b))(\log_b 2)m$. The fact that $f(ba)=f(a)$ for every $a$ implies that $f(f(n))=f(d\prod_{p_i \in F_ r}p_i^{\alpha_i})$ for some proper divisor $d$ of $b$ and some prime $r$ dividing $b$. As $\alpha_i \geq \beta_i$ for every $p_i$ that does not divide $b$ (because no power of $p_i$ can be factored out into powers or divisors of $b$), we have $\alpha_i \geq (1/(2b))(\log_b 2)m \geq N$ for some $i \in \{1,\dots,k\}$. Then, the number $d\prod_{1\leq i \leq k}p_i^{\alpha_i}$ is $\varepsilon$-equidistributed, and hence we have $f(f(n))=\prod_{p_i \leq b \text{ prime }}p_i^{\beta'_i}$ with $\beta'_i \geq (1/(2b))\#(d\prod_{1\leq i \leq k}p_i^{\alpha_i})_b \geq(1/(2b))(\log_b 2)\alpha_i \geq (1/(2b))^2(\log_b 2)^2 \cdot m$ for every $i$, and the argument can be repeated. Inductively, the exponents of the $p_i$ in $f^{t-1}(n)$ are at least $(1/(2b))^{t-1}(\log_b 2)^{t-1} \cdot m$. In particular, $f^{t-1}(n) \geq 2^{(1/(2b))^{t-1}(\log_b 2)^{t-1} \cdot m} \geq 2^{b} \geq b^2$. By the choice of $m$, this number is at least $b$, so $n$ has persistence at least $t$.
		\end{enumerate}
	\end{proof}
	In other words, assuming Conjectures \ref{conj:equidist} and \ref{conj:uniformdistr}, Theorem \ref{thm:erdosbase3} states that $\limsup_{n\to\infty} \nu_b^*(n) = \infty$ for every $b \geq 3$. Our next result gives an estimate on this number which is sharp up to a constant factor.	
	\begin{theorem}\label{thm:perserdos}
		For each base $b\geq 3$, we have 
		\begin{equation}\label{upperbound}
		\limsup_{n\to\infty} \frac{\nu_b^*(n)}{\log\log{n}}\leq \frac{1}{\log\left(\alpha^{-1}\right)},
		\end{equation}
		where $\alpha=\log_{b}{(b-1)}$. Moreover, if Conjecture \ref{conj:uniformdistr} holds, then we have
		\begin{equation}\label{lowerbound}
		\limsup_{n\to\infty} \frac{\nu_b^*(n)}{\log\log{n}}\geq  \frac{1}{\log{(\beta^{-1})}},
		\end{equation}
		where $\beta=\frac{\log_{b}{2}}{2b}$.
	\end{theorem}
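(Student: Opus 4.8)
The plan is to prove the two inequalities separately. Each is a quantitative sharpening of something already available: the upper bound \eqref{upperbound} follows from a crude size estimate for the Erd\H{o}s--Sloane map, while the lower bound \eqref{lowerbound} is obtained by keeping track of the constants in the construction used to prove Theorem \ref{thm:erdosbase3}.

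For \eqref{upperbound}, the starting point is that a positive integer $n$ has $\#(n)_b=\lfloor \log_b n\rfloor+1$ base-$b$ digits, each at most $b-1$, so that $S_b^*(n)\le (b-1)^{\#(n)_b}\le (b-1)\,n^{\alpha}$ with $\alpha=\log_b(b-1)\in(0,1)$. Writing $n_0=n$, $n_{j+1}=S_b^*(n_j)$ and $x_j=\log n_j$, this becomes the linear recursion $x_{j+1}\le \alpha x_j+\log(b-1)$, whence $x_j\le \alpha^{j}\log n+\tfrac{\log(b-1)}{1-\alpha}$ for all $j$. Therefore, as soon as $j\ge \frac{\log\log n}{\log(\alpha^{-1})}$ one has $\alpha^{j}\log n\le 1$, and hence $n_j\le M_b$ for a constant $M_b$ depending only on $b$. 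Since every orbit of $S_b^*$ reaches a fixed point (the trivial fact, recorded above, that $A_f=\mathbb{N}$ for $f=S_b^*$), the persistence is bounded on the finite set $\{1,\dots,\lceil M_b\rceil\}$ by a constant $D_b$, so $\nu_b^*(n)\le \frac{\log\log n}{\log(\alpha^{-1})}+O(1)$; dividing by $\log\log n$ and letting $n\to\infty$ gives \eqref{upperbound}.

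For \eqref{lowerbound}, I would revisit the proof of Theorem \ref{thm:erdosbase3}. That proof produces, for every $b\ge 3$, constants $N_b$ and $A_b=\prod_{p\le b\text{ prime},\,p\nmid b}p$ such that $\nu_b^*(A_b^{\,m})\ge t$ whenever $m\,\beta^{\,t-1}\ge N_b$, where $\beta=\tfrac{\log_b 2}{2b}$: for $b\ge 5$ this is precisely the bound obtained in case (iii), while for $b\in\{3,4\}$ cases (i) and (ii) give a bound of the same shape with a rate at least $\beta$, which suffices. Solving $m\,\beta^{\,t-1}\ge N_b$ for $t$ yields $\nu_b^*(A_b^{\,m})\ge 1+\big\lfloor \frac{\log m-\log N_b}{\log(\beta^{-1})}\big\rfloor$ for all $m\ge N_b$. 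Since $\log\log(A_b^{\,m})=\log m+\log\log A_b$, the ratio $\nu_b^*(A_b^{\,m})/\log\log(A_b^{\,m})$ tends to $\frac{1}{\log(\beta^{-1})}$ as $m\to\infty$, and because $A_b^{\,m}\to\infty$ this gives $\limsup_{n\to\infty}\nu_b^*(n)/\log\log n\ge \frac{1}{\log(\beta^{-1})}$, i.e.\ \eqref{lowerbound}.

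The main obstacle has, in effect, already been dealt with: it is Theorem \ref{thm:erdosbase3} together with the equidistribution conjectures on which it rests. What remains is largely bookkeeping — one must check that the additive constants (the extra $+1$ in $\#(n)_b$, the fixed-point term $\tfrac{\log(b-1)}{1-\alpha}$ in the recursion, and the constant $\log\log A_b$ in the lower bound) each contribute only $O(1)$ to $\nu_b^*$ and hence disappear after normalizing by $\log\log n$ — together with the observation, which should be made explicitly at the outset, that the orbit $(n_j)_{j\ge 0}$ really does stabilize, so that $\nu_b^*(n)$ is finite and all of the above is meaningful.
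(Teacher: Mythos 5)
Your proposal is correct and follows essentially the same route as the paper: the upper bound comes from the linear recursion $x_{j+1}\le\alpha x_j+O(1)$ (the paper phrases it in terms of digit counts $k_j$ rather than $\log n_j$, which is the same estimate up to constants), and the lower bound reuses the construction $n=A_b^{\,m}$ from Theorem \ref{thm:erdosbase3} and solves $m\beta^{t-1}\ge N_b$ for $t$, just as the paper does via the minimality of $m_t$. No gaps; the bookkeeping you flag (the $O(1)$ additive terms and the finiteness of the persistence on a bounded set) is exactly what the paper's proof also handles.
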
	
	\begin{proof}
		For ease of notation, let us denote by $f$ the Erd\H{o}s-Sloane map $S_b^*$ in base $b$. The proof is naturally divided into two parts.
		\begin{enumerate}
			\item[(i)] \emph{Upper bound}. Given $n > b$, let us denote by $k_j$ ($j=0,1,\ldots$) the number of digits of $f^j(n)$ in base $b$. Note that $k_j\leq 1+ \log_{b}{f^j(n)}$ for all $j$. Since each digit in the base-$b$ expansion of $f^j(n)$ is at most $b-1$, we have $f^{j+1}(n)\leq (b-1)^{k_j}$. Therefore, for all $j\geq 0$ we get 
			\begin{equation*}
				k_{j+1} \leq 1+ k_j\log_{b}{(b-1)} = 1+\alpha k_j.
			\end{equation*}
			By induction, it follows that for all $j\geq 1$ we have
			\begin{equation}\label{geomest}
			k_j\leq \alpha^jk_0 +(1+\alpha +\alpha^2+\cdots + \alpha^{j-1}) <\alpha^jk_0 + \frac{1}{1-\alpha}.
			\end{equation}
			Since $\alpha<1$, the first term in the right-hand side of \eqref{geomest} goes to zero as $j\to\infty$. Thus, let $j_0$ be the smallest natural number such that $\alpha^{j_0}k_0<1$. An easy calculation shows that 
			\begin{equation*}
			j_0 = \left\lceil \frac{\log_{b}{k_0}}{\log_{b}{(\alpha^{-1})}} \right\rceil, 
			\end{equation*}
			and since $k_0\leq 1+\log_{b}{n}\leq 2\log_{b}{n}$ when $n\geq b$, it follows that
			\begin{equation*}
			j_0\leq \frac{\log_{b}\log_{b}{n}}{\log_{b}{(\alpha^{-1})}}+\left(1+ \frac{\log_{b}{2}}{\log_{b}{(\alpha^{-1})}}\right).
			\end{equation*}
			But now note that $k_{j_0}\leq D$, where $D=1+\lceil (1+\alpha)^{-1}\rceil$. Hence, defining
			\begin{equation*}
			M = \max\left\{ \nu_b^*(m):m\text{ has at most } D \text{ digits in base }b\right\}< \infty,
			\end{equation*}
			we see that $\nu_b^*(f^{j_0}(n))\leq M$. Since we clearly have $\nu_b^*(n)= j_0 + \nu_b^*(f^{j_0}(n))$, it follows that
			\begin{align*}
				\nu_b^*(n)&\leq\frac{\log_{b}\log_{b}{n}}{\log_{b}{(\alpha^{-1})}} +\left( 1+ M+ \frac{\log_{b}{2}}{\log_{b}{(\alpha^{-1})}}\right)\\
				&=\frac{\log\log_{b}{n}}{\log{(\alpha^{-1})}} +\left( 1+ M+ \frac{\log_{b}{2}}{\log_{b}{(\alpha^{-1})}}\right).
			\end{align*}
			Dividing both sides of this inequality by $\log\log{n}$ and taking the $\limsup$ as $n\to \infty$, we get \eqref{upperbound} as desired. 
			
			\item[(ii)] \emph{Lower bound}. Here we shall use one of the ideas used in the proof of Theorem \ref{thm:erdosbase3}. 
			For each natural number $t$, let us consider $n_t=\left(\prod_{p_i \text{ prime}, p_i<b, p_i \nmid b} p_i\right)^{m_t}$, where $m_t$ is \emph{the smallest} positive integer such that 
			\begin{equation*}
			m_t\left(\frac{1}{2b}\right)^{t-1}\left(\log_b{2}\right)^{t-1} \geq  C=\max\{b,N\},
			\end{equation*}
			and where $N$ is given by Conjecture \ref{conj:uniformdistr} taking $\varepsilon=\dfrac{1}{2b}$. As we saw in the proof of Theorem \ref{thm:erdosbase3}, we have $\nu_b^*(n_t)\geq t$. Now, by the very definition of $m_t$, we know that 
			\begin{equation*}
			(m_t-1)\left(\frac{1}{2b}\right)^{t-1}\left(\log_b{2}\right)^{t-1}  <  C.
			\end{equation*}
			Taking logarithms (to base $b$) on both sides of this inequality and solving for $t$, we get
			\begin{equation*}
			t> \frac{\log_{b}{(m_t-1)}}{\log_{b}{\left(\beta^{-1}\right)}} +1 -
			\frac{\log_{b}{C}}{\log_{b}{\left(\beta^{-1}\right)}},
			\end{equation*}
			where $\beta=\frac{\log_{b}{2}}{2b}$. Note that 
			$
			\log_{b}{(m_t-1)} > \log_{b}{m_t} -\log_{b}{2}
			$
			(because $m_t>2$). Moreover, again from the definition of $m_t$, we have 
			\begin{equation*}
			\log_{b}{m_t} =\log_{b}\log_{b}{n_t} - \log_{b}\log_{b}\left(\prod_{1\leq i\leq k} p_i\right).
			\end{equation*}
			Putting all these facts together, we deduce that 
			\begin{align*}
			\nu_b^*(n_t) &> \frac{\log_{b}\log_{b}{n_t}}{\log_{b}{\left(\beta^{-1}\right)}} + K\\
			&= \frac{\log\log_{b}{n_t}}{\log{\left(\beta^{-1}\right)}} + K \stepcounter{equation}\tag{\theequation}\label{eq:nuestimate},
			\end{align*}
			where $K$ is a constant, namely
			\begin{equation*}
			K = 1- \frac{1}{\log_{b}\left(\beta^{-1}\right)}\log_{b}\left[2C\log_{b}{\left(\prod_{1\leq i\leq k}p_i\right)}\right].
			\end{equation*}
			Thus, the inequalities in (\ref{eq:nuestimate}) divided by $\log\log n_t$, letting $t\to\infty$, imply that 
			\begin{equation*}
			\limsup_{t\to\infty} \frac{\nu_b^*(n_t)}{\log\log{n_t}}\geq \frac{1}{\log{(\beta^{-1})}},
			\end{equation*}
			and this obviously implies \eqref{lowerbound}. 
		\end{enumerate}
	\end{proof}
	
	\subsection{$1$-shifted problem}
	
	In this section, we generalize a result of Wagstaff \cite{wagstaff1981iterating} for base 10, showing that for any base $b$, every positive integer $n$ reaches reach a fixed point or a cycle under iteration by $S_{1,b}$.	
	\begin{theorem}\label{thm:f1bconverges}
		Let $b \geq 2$. Then, for every positive integer $n$, the iterates of $S_{1,b}$
		starting at $n$ reach one of the following cycles:
		
		\begin{align*}
		&(10_2), & \text{ if }  b = 2;\\
		&(2,\dots,b-1,10_b) \text{ or } (1(b-2)_b),& \text{ if }  b \geq 3.
		\end{align*}
		
	\end{theorem}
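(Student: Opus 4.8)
The plan is to reduce the theorem to a single comparison lemma and then run two short inductions. Say that $n$ is \emph{exceptional} (in base $b$) if its base-$b$ expansion consists of a leading digit $Y\in\{1,\dots,b-1\}$ followed by $k-1$ copies of the digit $b-1$, where $k=\#(n)_b$; equivalently, $n=(Y+1)b^{k-1}-1$. The comparison lemma I would prove is: for every $b\ge 2$ and every integer $n\ge 1$, one has $S_{1,b}(n)\le n$ unless $n$ is exceptional, in which case $S_{1,b}(n)=(Y+1)b^{k-1}=n+1$; moreover $S_{1,b}(n)=n$ holds only for $n=2b-2$ (whose digits are $1,b-2$). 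I would prove this by strong induction on the number of digits $k$, using the identity $S_{1,b}(n)=(d_{k-1}+1)\,S_{1,b}(n')$, where $d_{k-1}$ is the leading digit of $n$ and $n'=n-d_{k-1}b^{k-1}<b^{k-1}$ is the number formed by the remaining digits (the degenerate case $n'=0$, i.e.\ $n=d_{k-1}b^{k-1}$, being trivial, as then $S_{1,b}(n)=d_{k-1}+1<n$). The cases $k=1$ and $k=2$ are immediate computations, the latter producing the unique fixed point $2b-2$. For $k\ge 3$: if $S_{1,b}(n')\le n'$ then $S_{1,b}(n)\le(d_{k-1}+1)n'<d_{k-1}b^{k-1}+n'=n$; otherwise, by the induction hypothesis $n'$ is exceptional, and then either the last $k-1$ digits of $n$ are all equal to $b-1$ (so that $n$ itself is exceptional and $S_{1,b}(n)=n+1$ exactly), or the leading digit of $n'$ is at most $b-2$, or $n'$ has fewer than $k-1$ digits, and in either of these last two sub-cases a one-line estimate on $(d_{k-1}+1)\,S_{1,b}(n')$ gives $S_{1,b}(n)<n$.

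Granting the lemma, the case $b\ge 3$ follows by strong induction on $n$, showing that the $S_{1,b}$-orbit of every $n\ge 1$ reaches the finite set $T=\{2,3,\dots,b-1,b\}\cup\{2b-2\}$. Indeed: if $n\le b$ then $n=1\mapsto 2\in T$ and every $n\in\{2,\dots,b\}$ already lies in $T$; if $n=2b-2$ it is the fixed point with digits $1,b-2$; if $n>b$, $n\ne 2b-2$ and $n$ is non-exceptional, the lemma gives $S_{1,b}(n)<n$ and we invoke the induction hypothesis for $S_{1,b}(n)$; and if $n>b$ is exceptional, then $S_{1,b}(n)=(Y+1)b^{k-1}$ (with $k=\#(n)_b$), whose digits are $Y+1,0,\dots,0$ when $Y\le b-2$ and $1,0,\dots,0$ when $Y=b-1$, so that $S_{1,b}^2(n)$ equals $Y+2\le b$ in the first case and $2$ in the second, hence lands in $T$. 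Finally one checks directly that $S_{1,b}$ cyclically permutes $\{2,\dots,b-1,b\}$ (since $d\mapsto d+1$ for $1\le d\le b-1$ and $10_b\mapsto 2$) and fixes $2b-2=1(b-2)_b$, so $T$ splits into exactly the two cycles named in the theorem and every orbit reaches one of them. The case $b=2$ is handled separately and more simply: there $S_{1,2}(n)=2^{\#1(n)_2}$ is a power of $2$, and any power of $2$ has exactly one binary digit equal to $1$, so $S_{1,2}^2(n)=2=10_2$ for every $n\ge 1$.

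The technical heart — and the step I expect to cost the most care — is the comparison lemma, precisely because the naive bound $S_{1,b}(n)\le (d_{k-1}+1)b^{k-1}$ is only barely below $n$ for numbers ending in a long run of digits $b-1$: it fails by exactly $1$ for the exceptional numbers, so the induction must be arranged so as to split off those numbers (and the fixed point $2b-2$) cleanly, and the boundary situations — leading digit equal to $b-1$, $n'=0$, the two-digit base case, and the digit-sum identity for $b=2$ — have to be tracked explicitly rather than absorbed into an asymptotic estimate. Once the lemma is in place, the argument uses no compactness or equidistribution input and the remainder is bookkeeping; in particular the classification of periodic points drops out for free, since any cycle must be contained in the finite set $T$ and $S_{1,b}$ restricted to $T$ is just the permutation described above.
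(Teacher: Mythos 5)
Your proposal is correct, and its overall architecture coincides with the paper's: both dispose of $b=2$ by noting that $S_{1,2}(n)$ is a power of two, both isolate the same exceptional family (numbers whose trailing digits are all $b-1$, on which the map increases by exactly $1$ and then collapses into the cycle $(2,\dots,b-1,10_b)$ one step later), and both run a descent on everything else. Where you genuinely differ is in how the decreasing inequality $S_{1,b}(n)<n$ is established for the remaining $n$. The paper bounds $S_{1,b}(n)$ from above by $(d_k+1)(d_j+1)b^{k-1}$, where $d_j$ is the most significant digit below the leading one that is $<b-1$, and then analyses when this bound fails to beat $n$; that analysis forces $n$ into the extra family $1(b-2)0\cdots0_b$, for which the exact value $S_{1,b}(n)=2(b-1)$ is computed separately, isolating the fixed point $1(b-2)_b$. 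You instead prove a clean standalone dichotomy by induction on the number of digits, via the factorization $S_{1,b}(n)=(d_{k-1}+1)\,S_{1,b}(n')$: either $n$ is exceptional and $S_{1,b}(n)=n+1$, or $S_{1,b}(n)\le n$ with equality exactly at $2b-2$. I checked your three sub-cases for exceptional $n'$ and they are exhaustive and each yields the required strict inequality (in particular the family $1(b-2)0\cdots0_b$ is absorbed into the generic case $S_{1,b}(n')\le n'$, since there $S_{1,b}(n')=b-1\le n'$). Your route costs a slightly longer case analysis inside the induction but buys a sharper, reusable comparison lemma and makes the uniqueness of the fixed point fall out of the equality analysis rather than a separate computation; both arguments are elementary and unconditional.
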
	
	\begin{proof}
		For ease of notation, let $f$ denote $S_{1,b}$ in this proof. First of all, notice that the theorem is trivial for $b = 2$, since in this case $f(n)$ is a power of $2$ for every $n$, and then $f^2(n) = 10_2$, which is the only fixed point of $f$. From this point on, we assume that $b \geq 3$.
		
		Assume first that $n$ is of the form $db^k-1$, where $2 \leq d \leq b$ (i.e., all the digits of $n$, possibly with the exception of the leading digit, are equal to $b-1$; the leading digit of $n$ is $d-1$; and $n$ has exactly $k$ digits). In this case, $f(n) = db^{k-1} = n+1$, and $2 \leq f(f(n)) \leq b = 10_b$, so this number belongs to the cycle $(2,3,\dots,10_b)$. 
		
		If $n$ is not of the form above, let $n = (d_kd_{k-1}\dots d_{0})_b$ be the representation of $n$ in base $b$, and let $j$ be the biggest index $i$ such that $i < k $ and $d_i < b-1$. We can bound $f(n)$ from above by $(d_k+1)(d_j+1)b^{k-1}$, which can be written as $d_kb^k+d_jb^{k-1}+b^{k-1}(1+d_k(d_j-(b-1)))$.
		
		On the other hand, we have that $n = \sum_{i=0}^k d_ib^i \geq d_kb^k + d_jb^{k+1}$. If the term $b^{k-1}(1+d_k(d_j-(b-1)))$ is negative, we have $f(n) < n$. As $d_j \leq b-2$, this term is nonnegative only if $d_j = b-2$ and $d_k = 1$. In this case, the bound for $f(n)$ becomes equal to $db^k+(b-2)b^{k-1}$. Furthermore, if $j < k-1$, then the lower bound on $n$ can be strengthened to $n \geq b^k + (b-1)b^{k+1} + (b-2)b^j$ and hence $f(n) < n$. So we must have $j = k-1$ to have $f(n) \geq n$. Also, if any digit of $n$ other than $d_k$ and $d_{k-1}$ is not zero, we have $n > db^k+(b-2)b^{k-1} \geq f(n)$. Therefore, $f(n) < n$ unless $n$ is of the form $1(b-2)000\dots0_b = b^k+(b-2)b^{k-1}$. In this case, $f(n) = 2(b - 1)$, which implies that $f(n) < n$ unless $k = 1$, which corresponds to the fixed point $n = 1(b-2)_b$. This proves that $f$ either reaches the fixed point $1(b-2)_b$ or keeps decreasing until it enters the cycle $(2,3,\dots,10_2)$.
		
	\end{proof}	
	In the case $b=3$, we are able to tell which integers reach the fixed point and which integers reach the cycle. Namely, we have the following result.	
	\begin{theorem}\label{t1b3cycles}
		For every $n \geq 1$, the sequence $(S^k_{1,3}(n))_{k \geq 1}$ reaches the cycle $(2,10_3)$ if and only if either $n$ or $2^{\#1(n)_3}$ lacks the digit 1 in its ternary expansion; otherwise, it reaches $(11_3)$.
	\end{theorem}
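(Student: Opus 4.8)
Throughout, $\#d(\cdot)$ and $\#(\cdot)$ refer to base-$3$ expansions. The plan is to reduce the dynamics of $S_{1,3}$ to iteration of the single auxiliary map $\phi(x):=\#1(2^x)$ on $\mathbb{N}$. Write $f=S_{1,3}$. The starting point is the identity $f(m)=\prod_i(d_i+1)=1^{\#0(m)}\,2^{\#1(m)}\,3^{\#2(m)}=2^{\#1(m)}3^{\#2(m)}$, valid for every $m\geq1$. Hence every value of an $f$-orbit past the first step has the form $2^a3^c$, and since appending trailing zeros changes neither $\#1$ nor $\#2$, one has $f(2^a3^c)=2^{\#1(2^a)}3^{\#2(2^a)}=f(2^a)$. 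Therefore the $f$-orbit of an arbitrary $n\geq1$ runs $n\mapsto 2^{\#1(n)}3^{\#2(n)}\mapsto f(2^{\#1(n)})\mapsto f^2(2^{\#1(n)})\mapsto\cdots$, so its tail coincides with the tail of the $f$-orbit of $2^{\#1(n)}$; by Theorem~\ref{thm:f1bconverges} both reach either the fixed point $11_3$ or the cycle $(2,10_3)$. Now $11_3=2^2\cdot3^0$ and $\phi(2)=\#1(2^2)=2$, so once a term $2^2 3^{c}$ appears the next iterate is $2^2 3^0=11_3$, which is fixed; consequently the $f$-orbit of $2^a$ reaches $11_3$ precisely when $2$ appears among $a,\phi(a),\phi^2(a),\dots$, and reaches the cycle otherwise. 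The theorem then follows once I show that, with $a=\#1(n)$, the value $2$ belongs to the $\phi$-orbit of $a$ if and only if $a\geq1$ and $2^a$ has a digit $1$, i.e., if and only if neither $n$ nor $2^{\#1(n)}$ lacks the digit $1$.

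I would handle the two ``lacks a digit $1$'' cases first, as they are immediate. If $n$ lacks the digit $1$, then $\#1(n)=0$ and, since $n\geq1$ forces a digit $2$, $f(n)=3^{\#2(n)}$, whose ternary form is a $1$ followed by zeros; hence $f^2(n)=2$ and the orbit enters the cycle. If $\#1(n)\geq1$ but $2^{\#1(n)}$ lacks the digit $1$, then $\phi(\#1(n))=0$ and the subsequent $\phi$-orbit is $\#1(n),0,1,0,1,\dots$ (note $\phi(0)=\#1(2^0)=\#1(1_3)=1$ and $\phi(1)=\#1(2^1)=\#1(2_3)=0$); this contains $2$ only if $\#1(n)=2$, which is impossible since $\phi(2)=2\neq0$. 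In both situations $2$ is absent from the $\phi$-orbit, so the iterates reach the cycle, exactly as the statement predicts.

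The substance lies in the remaining case: $n$ has a digit $1$ and $2^{\#1(n)}$ has a digit $1$. Put $a:=\#1(n)\geq1$, so $\phi(a)\geq1$; the goal is that the $\phi$-orbit of $a$ reaches $2$. I would isolate two elementary facts about ternary expansions of powers of $2$. First, $\#1(2^x)$ is \emph{even} for every $x\geq1$: since $3\equiv1\pmod2$, the number $2^x$ is congruent mod $2$ to its ternary digit sum $\#1(2^x)+2\,\#2(2^x)\equiv\#1(2^x)\pmod2$, while $2^x\equiv0\pmod2$. Second, for $x\geq3$ one has $\#1(2^x)\leq\#(2^x)=\lfloor x\log_3 2\rfloor+1<x$, a trivial digit count (equivalently, $2^x<3^{x-1}$ for $x\geq3$). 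Finally, $2^x\equiv(-1)^x\pmod3$, so for even $x$ the units ternary digit of $2^x$ is $1$, whence $\#1(2^x)\geq1$. Now $\phi(a)=\#1(2^a)$ is even and positive, hence $\geq2$; and if $\phi^k(a)\geq2$ --- thus even and positive --- then $2^{\phi^k(a)}$ has units digit $1$, so $\phi^{k+1}(a)=\#1\bigl(2^{\phi^k(a)}\bigr)\geq1$, which being even forces $\phi^{k+1}(a)\geq2$. Hence $\phi^k(a)\geq2$ for all $k\geq1$, while $\phi^{k+1}(a)<\phi^k(a)$ whenever $\phi^k(a)\geq3$; a strictly decreasing integer sequence bounded below by $2$ must hit $2$, which is a fixed point of $\phi$. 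Thus $2$ lies in the $\phi$-orbit of $a$, the $f$-orbit of $2^a$ --- and so of $n$ --- reaches $11_3$, and assembling the three cases proves the theorem.

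The step I expect to be the genuine obstacle is controlling the auxiliary map $\phi(x)=\#1(2^x)$: attacking it head-on runs into the notorious opacity of the ternary digits of $2^x$, the very phenomenon behind the Erd\H{o}s conjecture recalled in Section~\ref{sec:questions}. The escape is that one never needs to \emph{locate} the digit $1$'s, only to know the \emph{parity} of their number --- which the congruence $3\equiv1\pmod2$ supplies for free --- and, coupled with the crude bound $\#(2^x)<x$, this is exactly enough to keep the auxiliary orbit at level $\geq2$ and drive it into the fixed point $2$.
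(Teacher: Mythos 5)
Your proof is correct and follows essentially the same route as the paper's: both reduce via $f^2(n)=f(2^{\#1(n)})$ to tracking the map $x\mapsto\#1(2^x)$, and both hinge on the same two observations --- that an even number has an even count of ternary digits $1$, and that a power of $4$ ends in the ternary digit $1$. The only difference is cosmetic: you run an explicit descent of the exponent orbit (using $\#1(2^x)<x$ for $x\geq 3$) down to the fixed point $2$, where the paper packages the same descent as a strong induction on $n$ via $2^{\#1(n)}<n$.
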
	
	\begin{proof}
		Once more, let $f$ denote $S_{1,3}$. The result is clear for $n \in \{1,2,10_3,11_3\}$. Let $n \geq 12_3$. Notice that $f(n) = 2^{\#1(n)}3^{\#2(n)}$, so $f(n)$ ends in $\#2(n)$ zeros in base 3, and hence $f(f(n)) = f(2^{\#1(n)})$.
		
		If $\#1(n) = 0$, then $f(f(n)) = f(1) = 2$. Also, if $\#1(2^{\#1(n)}) = 0$, then $f^3(n) = f(f(2^{\#1(n)})) = f(2^{\#1(2^{\#1(n)})}) = f(1) = 2$.
		
		On the other hand, assume that both $\#1(n)$ and $\#1(2^{\#1(n)})$ are positive. We will prove by induction on $n$ (over those values such that $\#1(n) > 0$ and $2^{\#1(n)} > 0$) that, in this case, $(f^k(n))_{k \geq 1}$ reaches the cycle $(11_3)$.
		
		The result if trivial if $n \leq 11_3$. If $n > 11_3$, then $f(f(n)) = f(2^{\#1(n))})$. As $2^{\#1(n)} < n$, we can use the induction hypothesis to prove that $n$ reaches $(11_3)$ if we have $\#1(2^{\#1(n)}) > 0$ and $\#1(2^{\#1(2^{\#1(n)})}) > 0$. The first inequality is just part of the condition on $n$; the second comes from the fact that $\#1(2^{\#1(n)})$ is even (an even number must have an even number of digits 1 in its ternary expansion), and hence $2^{\#1(2^{\#1(n)})}$ is a power of four, and so it ends with a digit 1.
		
	\end{proof}	
	\begin{remark}
		By Conjecture \ref{conj:equidist}, the number of $n$ such that $\#1(2^n)_3  = 0$ is finite. A result of Narkiewicz \cite{narkiewicz1980note} says that the number of $n$ up to $N$ with this property is bounded by $1.62N^{\log_3 2}$, so in particular their density in the set of positive integers is zero (this fact also follows from a more general result of de Faria and Tresser \cite[Corollary~3.7]{de2014sloane}).
	\end{remark}	
	As for the persistence, similarly as in Theorem \ref{thm:erdosbase3}, we prove that Conjectures  \ref{conj:equidist} and \ref{conj:uniformdistr} imply that there are integers of arbitrarily large persistence for $S_{1,b}$.	
	\begin{theorem}\label{thm:t1b3pers}
		Conjecture \ref{conj:equidist} implies that there are integers of arbitrarily large persistence for the $1$-shifted problem in base $3$ and $4$, and Conjecture \ref{conj:uniformdistr} implies the same result for bases greater than $4$.
	\end{theorem}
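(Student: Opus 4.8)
The plan is to run, for each of the three ranges of $b$, the very same construction-and-chain argument used to prove Theorem \ref{thm:erdosbase3}, with the Erd\H{o}s-Sloane map $S_b^*$ replaced throughout by $S_{1,b}$. The three facts that make this transfer work are: a digit $0$ contributes the factor $0+1=1$ to $S_{1,b}$, so $S_{1,b}(bn)=S_{1,b}(n)$; the value $S_{1,b}(n)=\prod_{d=1}^{b-1}(d+1)^{\#d(n)}$ is a product of prime powers of primes $\le b$, and for every prime $p<b$ the exponent of $p$ in it is at least $\#(p-1)(n)$, because the digit $p-1$ contributes the factor $p$; and, once the equidistribution conjecture is fed in, each such exponent is $\ge(\tfrac1b-\varepsilon)\#(n)_b$. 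This last conclusion is exactly the hypothesis that the chain argument of Theorem \ref{thm:erdosbase3} consumes. The one ingredient that is no longer trivial in the shifted setting—a bound on the periodic points—is supplied by Theorem \ref{thm:f1bconverges}, which tells us both that the orbits converge and that $\mathrm{Per}(S_{1,b})=\{2,3,\dots,b\}\cup\{1(b-2)_b\}$, every element of which is at most $2b-2<b^2$.

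For $b=3$, take $q=3$, $F=\{2\}$, $a=1$ in Conjecture \ref{conj:equidist} and $\varepsilon=\tfrac16$, so $\#1(2^n)\ge\tfrac16(\log_3 2)n$ for $n$ large. Since $S_{1,3}(2^a3^c)=S_{1,3}(2^a)=2^{\#1(2^a)}3^{\#2(2^a)}$ (the factor $3^c$ only appends zeros), the orbit of $2^m$ is governed by $m_0=m$, $m_{j+1}=\#1(2^{m_j})$ with $m_{j+1}\ge\tfrac16(\log_3 2)m_j$ once $m_j$ exceeds the equidistribution threshold $N$; choosing $m$ with $(\tfrac16\log_3 2)^{t-1}m\ge\max\{N,3\}$ then forces $S_{1,3}^{j}(2^m)\ge 2^3=8$ for all $j<t$, hence none of these iterates lies in $\mathrm{Per}(S_{1,3})=\{2,3,4\}$ and $\nu_{1,3}(2^m)\ge t$. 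For $b=4$, apply Conjecture \ref{conj:equidist} twice, with $(q,F,a)=(4,\{3\},1)$ and $(4,\{3\},2)$, observe that $S_{1,4}(2^a3^c)$ equals $S_{1,4}(3^c)$ or $S_{1,4}(2\cdot 3^c)$ according to the parity of $a$ (because $4=2^2$ only appends zeros), take $\varepsilon=\tfrac18$, and run the identical chain on the exponent of $3$ starting from $3^m$; since $\mathrm{Per}(S_{1,4})=\{2,3,4,6\}$, all below $3^3$, the case closes the same way.

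For $b\ge 5$ I would follow the proof of Theorem \ref{thm:erdosbase3}(iii) essentially verbatim: apply Conjecture \ref{conj:uniformdistr} for every proper divisor $d$ of $b$ and every prime $r\mid b$, with $q=b$, $a=d$, and $F_r$ the primes less than $b$ other than $r$ (Bertrand's postulate, as in the proof of Theorem \ref{thm:erdosbase3}, furnishes a prime $<b$ that does not divide $b$, so at least one relevant exponent always clears the threshold); take $\varepsilon=\tfrac1{2b}$; and start from $n=\big(\prod_{p<b,\ p\nmid b}p\big)^m$. Using $S_{1,b}(bn)=S_{1,b}(n)$ to discard the powers of $b$ produced at each step—including a possible factor $b$ itself, when $b$ is prime—one checks that one application of $S_{1,b}$ multiplies the lower bound on the exponents of the primes not dividing $b$ by at least $\beta=\tfrac{\log_b 2}{2b}$, so these exponents in $S_{1,b}^{j}(n)$ are $\ge\beta^{j}m$. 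Picking $m$ with $\beta^{t-1}m\ge\max\{N,b\}$ gives $S_{1,b}^{j}(n)\ge 2^{\beta^{t-1}m}\ge 2^b\ge b^2$ for $j<t$; since every periodic point of $S_{1,b}$ is at most $2b-2<b^2$, we conclude $\nu_{1,b}(n)\ge t$.

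The only step with any real content beyond Theorem \ref{thm:erdosbase3} is the last one in each case: certifying that the large iterate produced is not itself periodic. For $S_b^*$ this was immediate since the periodic points are $1,\dots,b-1$; here it leans on Theorem \ref{thm:f1bconverges}, both to know the orbits converge at all and to locate the finitely many periodic points, all of which are smaller than $b^2$. A minor extra bookkeeping point, absent for $S_b^*$, is that $S_{1,b}$ can emit a factor of $b$ from the digit $b-1$, so one must keep applying $S_{1,b}(bn)=S_{1,b}(n)$ to stop the relevant prime powers from being absorbed—but this is the same device already used in Theorem \ref{thm:erdosbase3}(iii). So I expect no genuine obstacle: the result is a careful, but essentially routine, transcription of the proof of Theorem \ref{thm:erdosbase3}.
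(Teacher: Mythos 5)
Your proposal is correct and follows essentially the same route as the paper's own proof: the same choices of $(q,F,a)$ and $\varepsilon$ in each of the three cases, the same starting integers $2^m$, $3^m$, and $\bigl(\prod_{p<b,\,p\nmid b}p\bigr)^m$, the same exponent-chain induction transplanted from Theorem \ref{thm:erdosbase3}, and the same final appeal to Theorem \ref{thm:f1bconverges} to bound the periodic points (all at most $2b-2$, i.e., two digits) so that a large iterate cannot yet be periodic. No substantive difference to report.
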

	\begin{proof}
		We split the proof into three cases:
		\begin{enumerate}[(i)]
			
			\item $\textit{Base 3}$
			
			Put $f=S_{1,3}$. Conjecture \ref{conj:equidist} implies the following: for every $\varepsilon > 0$, there exists $N$ such that $2^m$ is $\varepsilon$-equidistributed whenever $m \geq N$. 
			To construct a number of persistence greater than $t$, notice that, if a number $a$ has exactly $x$ digits $1$ in its ternary expansion, then $f^2(a)=f(2^x)$. Take $\varepsilon = 1/6$ and let $N$ be the integer given by Conjecture \ref{conj:equidist} for this value of $\varepsilon$. Let $m$ be such that $m(1/6)^{t-1}(\log_3 2)^{t-1} \geq \max\{N,3\}$.
			
			We claim that the number $n=2^m$ has persistence larger than $t$. Let us denote, for $i \geq 1$ by $a_i$ and $b_i$, respectively, the numbers defined inductively in the following way: $2^m$ has $a_1$ digits $1$ and $b_1$ digits $2$ in its ternary expansion, and $f(2^{a_i})=2^{a_{i+1}}\cdot 3^{b_{i+1}}$ for every $i \geq 1$. As $m \geq N$, $2^m$ is $\varepsilon$-equidistributed, so we have $a_1 \geq (1/6)(\log_3 2)m$.
			This implies that $f^2(2^m)=f(2^{a_1}\cdot 3^{b_1})=f(2^{a_1})=2^{a_2}\cdot 3^{b_2}$. 
			As $a_1 \geq (1/6)\log_3 2 \cdot m \geq N$, the number $2^{a_1}$ is $\varepsilon$-equidistributed, and this in turn implies that $a_2 \geq (1/6)\log_3(2) \cdot a_1\geq (1/6)^2(\log_3{2})^2m$.
			Inductively, we have that $f^{t-1}(2^m) = 2^{a_{t-1}}\cdot 3^{b_{t-1}}$ where $a_{t-1} \geq (1/6)^{t-1}(\log_3 2)^{t-1}m \geq 3$, which implies that $n=2^m$ has persistence at least $t$, since Theorem \ref{thm:f1bconverges} shows that the elements of the cycle and the fixed point of $f$ have at most two digits.
			
			\item $\textit{Base 4}$

			We consider the number $n=3^m$, where $m(1/8)^{t-1}(\log_4{3})^{t-1} \geq \max\{M,4\}$, where $M$ is the maximum of the $N$ obtained applying Conjecture \ref{conj:equidist} with $(q,F,a)=(4,\{3\},1)$ and $(q,F,a)=(4,\{3\},2)$, in both cases with $\varepsilon = 1/8$, and the proof of item (i) applies.
			
			\item $\textit{Larger bases}$
			
			Finally, for $b \geq 5$, let $F$ be the set of primes smaller than $b$ and $F_r=F-\{r\}$. We apply Conjecture \ref{conj:uniformdistr} for each prime divisor $r$ of $b$ and each proper divisor $d$ of $b$, with $F_r$ being the set of primes considered, $q=b$ and $a=d$. Note that, by Bertrand's postulate (which states that, for every $n>1$, there is a prime $p$ such that $n<p<2n$), there is some prime $s$ such that $(b-1)/2<s<b$. In particular, $s$ does not divide $b$ (so that $F_r$ is nonempty for every prime $r$ dividing $b$). Taking the maximum of the $N$ given in each application of the conjecture with a fixed $\varepsilon > 0$, we get the following statement: for every $\varepsilon > 0$, there is $N$ such that $d\prod_{p_i \in F_r}p_i^{\alpha_i}$ is $\varepsilon$-equidistributed whenever $\alpha_i \geq N$ for some $i \in \{1,\dots,k\}$, where $d$ is a proper divisor of $b$ and $r$ is a prime divisor of $b$.
			Moreover, note that, for every $n$, one has $f(bn)=f(n)$, since the expansion of $bn$ and $n$ in base $b$ differ only by one digit $0$.
			
			We claim that $\nu_{1,b}(n) \geq t$, where $n=(\prod_{p_i \in F, p_i\nmid b}p_i)^m$ and $m$ satisfies the inequality $m(1/(2b))^{t-1}(\log_b 2)^{t-1} \geq \max\{N,b\}$, where $N$ is the integer given by the applications of Conjecture \ref{conj:uniformdistr} as above with $\varepsilon = 1/(2b)$.
			As $m \geq N$, $n$ is $\varepsilon$-equidistributed, whence, using a rough bound $(\prod_{p_i \in F, p_i\nmid b}p_i)^m \geq 2^m$, we have $f(n)=\prod_{p_i \leq b \text{ prime }}p_i^{\beta_i}$, with $\beta_i \geq (1/b-\varepsilon)\#(n)_b \geq (1/(2b))(\log_b 2)m$. The fact that $f(ba)=f(a)$ for every $a$ implies that $f(f(n))=f(d\prod_{p_i \in F_ r}p_i^{\alpha_i})$ for some proper divisor $d$ of $b$ and some prime $r$ dividing $b$. As $\alpha_i = \beta_i$ for every $p_i$ that does not divide $b$ (because no power of $p_i$ can be factored out into powers or divisors of $b$), we have $\alpha_i \geq (1/(2b))(\log_b 2)m \geq N$ for some $i \in \{1,\dots,k\}$. Then, the number $d\prod_{1\leq i \leq k}p_i^{\alpha_i}$ is $\varepsilon$-equidistributed, and hence we have $f(f(n))=\prod_{p_i \leq b \text{ prime }}p_i^{\beta'_i}$ with $\beta'_i \geq (1/(2b))\#(d\prod_{1\leq i \leq k}p_i^{\alpha_i})_b \geq(1/(2b))(\log_b 2)\alpha_i \geq (1/(2b))^2(\log_b 2)^2 \cdot m$ for every $i$, and the argument can be repeated. Inductively, the exponents of the $p_i$ in $f^{t-1}(n)$ are at least $(1/(2b))^{t-1}(\log_b 2)^{t-1} \cdot m$. In particular, $f^{t-1}(n) \geq 2^{(1/(2b))^{t-1}(\log_b 2)^{t-1} \cdot m} \geq 2^{b} \geq b^2$. Again, as the elements of the cycle and the fixed point of $f$ have at most two digits, this implies that $n$ has persistence at least $t$.
		\end{enumerate}	
	\end{proof}	
	An alternative proof of Theorem \ref{thm:t1b3pers} in the case $b=3$ would be to find an infinite sequence $(a_n)_{n \geq 0}$ such that $2^{a_n}$ has $a_{n-1}$ digits $1$ in base $3$ for every $n \geq 1$. In this case, the integer $2^{a_n}$ would have persistence equal to $n$ plus the persistence of $2^{a_0}$. The existence of such a sequence is a straightforward consequence of Conjecture \ref{conj:equidist}, but we conjecture it independently, as it may be much simpler to prove than the full statement of the original conjecture. A computational search shows that the initial terms of such a sequence could be 
	\begin{equation*}
	2, 4, 8, 24, 96, 350, 1580, 7520, 35600, 168980,
	\end{equation*}
	since $2^{168980}$ has $35600$ digits $1$, $2^{35600}$ has $7520$ digits $1$, and so on, and $2^2$ is a fixed point of $S_{1,3}$. In fact, there is some computational evidence in favor of the following stronger conjecture, which assures that one can find such a sequence starting from any sufficiently large even number:	
	\begin{conjecture}\label{conj:dig1pow2}
		There is $N$ such that, for every $n > N$, there is $m$ such that $2^{2m}$ has exactly $2n$ digits $1$ in base $3$.
	\end{conjecture}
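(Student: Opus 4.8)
For $m\ge 1$ set $g(m)=\#1\!\left(2^{2m}\right)_3=\#1(4^m)_3$. Since $4^m$ is even and $3\equiv 1\pmod 2$, the ternary digit sum $\#1(4^m)_3+2\,\#2(4^m)_3$ of $4^m$ is even, so $g(m)$ is always even; thus Conjecture \ref{conj:dig1pow2} is precisely the assertion that the image of $g$ contains every sufficiently large even integer. My plan is to combine a \emph{growth} statement with a \emph{gap} statement. The growth input, $g(m)=cm+o(m)$ with $c=\tfrac{2\log 2}{3\log 3}\approx 0.42$ (so in particular $g(m)\to\infty$), is immediate from Conjecture \ref{conj:equidist}: applying it with $q=3$, $F=\{2\}$, $a=1$ gives that the digit $1$ has frequency $\tfrac13$ along $(2^n)$, hence along the subsequence $n=2m$, while $\#(4^m)_3\sim m\log_3 4$.

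The reduction to a gap statement is a discrete intermediate--value argument. Suppose one could show $\lvert g(m+1)-g(m)\rvert\le 2$ for all $m\ge M$ (with $M\ge 1$). Fix an even integer $v\ge g(M)$ and let $m_1$ be the largest $m\ge M$ with $g(m)\le v$; this is well defined because $g(M)\le v$ and $g(m)\to\infty$. Then $g(m_1)\le v<g(m_1+1)\le g(m_1)+2\le v+2$ with $g(m_1)$ and $g(m_1+1)$ both even, which forces $g(m_1+1)=v+2$ and hence $g(m_1)=v$. So $g$ attains every even integer $\ge g(M)$, i.e.\ Conjecture \ref{conj:dig1pow2} holds with any $N$ such that $2N\ge g(M)$. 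Everything therefore reduces to understanding the single step $g(m+1)-g(m)$.

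For the step, write $4^{m+1}=4^m+3\cdot 4^m$, so the ternary expansion of $4^{m+1}$ is obtained by adding the expansion of $4^m=:x$ to its shift $3x$: at position $i$ one forms $x_i+x_{i-1}$ plus an incoming carry, and a carry started below passes through position $i$ exactly when $x_i+x_{i-1}\ge 2$. Consequently $g(m+1)-g(m)$ is a signed sum over $\Theta(m)$ positions whose terms depend on short windows of digits of $x$ — a bounded window away from carry cascades, a window as long as the cascade inside one — and whose average over $m$ is the constant $c$. Here the carries are not a lower--order correction: the no--carry part alone would produce a \emph{negative} drift, so the positive drift of $g$ is created entirely by the carries. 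Using the block version of Conjecture \ref{conj:equidist} one can control the non--cascade part through short--block frequencies and show that a carry cascade of length $L$ is exponentially rare (it requires $L$ consecutive adjacent digit--sums $\ge 2$), so the longest cascade in $4^m$, hence the largest single local correction, has length $O(\log m)$, giving $\lvert g(m+1)-g(m)\rvert=O(\log m)$.

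Bringing this $O(\log m)$ bound down to the constant $2$ demanded by the discrete intermediate--value step is, I expect, the main obstacle, and I suspect the bound $2$ is in fact false: the step is a massive cancellation of $\Theta(m)$ genuinely fluctuating terms down to $O(1)$, and there is no structural reason it cannot occasionally be $4$ or more, which could in principle let $g$ vault over infinitely many even values. What is really wanted is a \emph{local} limit description of $\#1(4^m)_3$ — far stronger than the density statement of Conjecture \ref{conj:equidist}, and squarely among the resistant problems alluded to in Section \ref{intro}, such as Erd\H{o}s's conjecture on the digit $2$ in $2^n$; exploiting the identity $4^m=(1+3)^m$, whose binomial expansion (via Lucas' and Kummer's theorems) pins down the low--order ternary digits of $4^m$ but nothing in the bulk, is another possible entry point. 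Failing the full conjecture, a realistic partial result — provable, I believe, from the block version of Conjecture \ref{conj:equidist} together with the cascade bound — is that $g$ attains \emph{every} even integer in intervals $[cm-C\log m,\,cm+C\log m]$ for a positive--density set of $m$, hence a positive proportion of all even integers, which already reinforces the evidence for Conjecture \ref{conj:dig1pow2} and for the alternative route to Theorem \ref{thm:t1b3pers} discussed above.
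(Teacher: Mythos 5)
The statement you set out to prove is itself a \emph{conjecture} in the paper: the authors give no proof of Conjecture \ref{conj:dig1pow2}, only computational evidence (the sequence $2,4,8,24,96,\dots$) and the remark that it is equivalent to the even-indexed subsequence of A036461 containing all sufficiently large even numbers. So there is no proof to compare yours against, and your proposal --- as you candidly acknowledge --- does not close the gap either. Your preliminary observations are sound: the parity argument (since $3\equiv 1\pmod 2$, the quantity $\#1(4^m)_3$ has the same parity as $4^m$, hence is even) is exactly the fact the paper uses inside the proof of Theorem \ref{t1b3cycles}, and your reformulation of the conjecture as surjectivity of $g(m)=\#1(4^m)_3$ onto all large even integers matches the paper's own remark. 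The growth estimate $g(m)\sim \frac{2\log 2}{3\log 3}\,m$ does follow from Conjecture \ref{conj:equidist} and is the same mechanism the paper exploits in Theorems \ref{thm:erdosbase3} and \ref{thm:t1b3pers}.

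The genuine gap is the step bound $\lvert g(m+1)-g(m)\rvert\le 2$ on which your discrete intermediate-value argument entirely rests. You cannot prove it, you suspect (reasonably) that it is false, and without it --- or without some substitute ruling out that $g$ vaults over a given even value for \emph{every} $m$ --- the argument says nothing about which even numbers are attained; linear average growth is perfectly compatible with the image of $g$ missing infinitely many even integers. A secondary problem: the auxiliary claims that a carry cascade of length $L$ is exponentially rare and that the longest cascade in $4^m$ has length $O(\log m)$ do not follow from the block version of Conjecture \ref{conj:equidist}. That conjecture gives, for each \emph{fixed} block length $l$, an asymptotic frequency $3^{-l}$ with no rate and no uniformity in $l$; it cannot be applied with $l$ growing like $\log m$, so even the $O(\log m)$ bound on $\lvert g(m+1)-g(m)\rvert$ is unjustified as stated. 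What the conjecture really needs --- and what neither you nor the paper supplies --- is local (as opposed to statistical) control of the ternary digits of $4^m$, which is of the same order of difficulty as the Erd\H{o}s digit problems mentioned in the introduction. Your proposed weakening (that $g$ attains a positive proportion of even integers) would be a meaningful partial result, but it too currently rests on the unproven cascade estimates.
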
	
	\begin{remark}
		Conjecture \ref{conj:dig1pow2} is equivalent to the assertion that the subsequence of terms of even order of \seqnum{A036461} contains all sufficiently large even numbers.
	\end{remark}	
	Just as we did for the Erd\H{o}s-Sloane persistence, we can estimate the maximal growth of $\nu_{1,b}(n)$ as a function of $n$ from above (unconditionally) and from below (using Conjecture \ref{conj:uniformdistr} as in Theorem \ref{thm:t1b3pers}). More precisely, we have the result stated below. In its proof, we shall use the following evident fact. If $F$ is any finite non-empty set and $\phi:F\to F$ is a self-map, then every $x\in F$ is eventually mapped to a (fixed or) periodic point under $\phi$, and the number of iterates that it takes for $x$ to reach the periodic cycle is obviously bounded by the cardinality of $F$.
	\begin{theorem}\label{thm:perst1}
		For each base $b\geq 3$, we have 
		\begin{equation}\label{shiftupperbound}
		\limsup_{n\to\infty} \frac{\nu_{1,b}(n)}{\log\log{n}}\leq \frac{2}{\log\left(\alpha^{-1}\right)},
		\end{equation}
		where $\alpha=\log_{b}{(b-1)}$. Moreover, if Conjecture \ref{conj:uniformdistr} holds, then we have
		\begin{equation}\label{shiftlowerbound}
		\limsup_{n\to\infty} \frac{\nu_{1,b}(n)}{\log\log{n}}\geq \frac{1}{\log{(\beta^{-1})}},
		\end{equation}
		where $\beta=\frac{\log_{b}{2}}{2b}$.
	\end{theorem}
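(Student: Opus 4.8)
The plan is to mirror the two-part structure of the proof of Theorem~\ref{thm:perserdos}, writing $f=S_{1,b}$ throughout. For the \emph{upper bound} \eqref{shiftupperbound}, the one new phenomenon to deal with is that $f$, unlike $S_b^*$, may fail to decrease the number of digits in a single step: a digit $b-1$ is shifted to the factor $b$, which \emph{adds} a digit rather than removing one. The observation I would isolate is that $f$ nonetheless contracts over \emph{two} steps. Writing $n$ in base $b$, let $c$ be the number of its digits equal to $b-1$ and let $M$ be the product of $(d+1)$ over the remaining digits $d$ of $n$; then $f(n)=M\cdot b^{c}$, so the base-$b$ expansion of $f(n)$ is that of $M$ followed by $c$ zeros. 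Each such trailing zero contributes a factor $(0+1)=1$ to the next iterate, which gives the identity $f^2(n)=f(M)$. Since $M\le (b-1)^{\#(n)}$ we get $\#(M)\le 1+\alpha\,\#(n)$ and hence $\#(f^2(n))=\#(f(M))\le 1+\#(M)\le \alpha\,\#(n)+2$, where $\alpha=\log_b(b-1)<1$.

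With this two-step estimate in hand, the rest of the upper bound is a transcription of part~(i) of the proof of Theorem~\ref{thm:perserdos} applied to $g=f^2$. Setting $k_i=\#(g^i(n))$ we have $k_{i+1}\le \alpha k_i+2$, hence $k_i<\alpha^i k_0+\tfrac{2}{1-\alpha}$, so after $i_0=\lceil \log_b k_0/\log_b(\alpha^{-1})\rceil$ applications of $g$ the digit count drops below a constant $D=D(b)$. Since $k_0\le 2\log_b n$ for $n\ge b$, this gives $i_0\le \log_b\log_b n/\log_b(\alpha^{-1})+O(1)$. By Theorem~\ref{thm:f1bconverges}, $\nu_{1,b}$ is finite and hence bounded by some $M_0<\infty$ on the finite set of integers with at most $D$ digits; since $g^{i_0}(n)=f^{2i_0}(n)$, we obtain $\nu_{1,b}(n)\le 2i_0+M_0\le \tfrac{2\log\log_b n}{\log(\alpha^{-1})}+O(1)$. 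Dividing by $\log\log n$ and letting $n\to\infty$ (using $\log\log_b n\sim\log\log n$) yields \eqref{shiftupperbound}, the factor $2$ being exactly the price of contracting once every two steps.

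For the \emph{lower bound} \eqref{shiftlowerbound} I would reuse the integers built in the proof of Theorem~\ref{thm:t1b3pers} in place of those of Theorem~\ref{thm:erdosbase3}, and then repeat, essentially verbatim, the logarithmic manipulation of part~(ii) of the proof of Theorem~\ref{thm:perserdos}. For each $t$, let $n_t$ be the integer constructed there ($n_t=2^{m_t}$ for $b=3$, $n_t=3^{m_t}$ for $b=4$, and $n_t=(\prod_{p<b\text{ prime},\,p\nmid b}p)^{m_t}$ for $b\ge 5$), with $m_t$ the smallest positive integer satisfying $m_t\gamma^{t-1}\ge\max\{b,N\}$, where $N$ comes from Conjecture~\ref{conj:uniformdistr} with $\varepsilon=\tfrac{1}{2b}$ and $\gamma$ is the contraction rate used there: $\gamma=\beta=\tfrac{\log_b 2}{2b}$ for $b=3$ and $b\ge 5$, and $\gamma=\tfrac{\log_4 3}{8}\ge\beta$ for $b=4$. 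Theorem~\ref{thm:t1b3pers} gives $\nu_{1,b}(n_t)\ge t$; minimality of $m_t$ gives $(m_t-1)\gamma^{t-1}<\max\{b,N\}$, and taking base-$b$ logarithms, solving for $t$, and substituting $\log_b m_t=\log_b\log_b n_t-\log_b\log_b(\prod p)$ exactly as before produces $\nu_{1,b}(n_t)>\tfrac{\log\log_b n_t}{\log(\gamma^{-1})}+(\text{const})$. Letting $t\to\infty$ gives $\limsup_{t}\tfrac{\nu_{1,b}(n_t)}{\log\log n_t}\ge\tfrac{1}{\log(\gamma^{-1})}\ge\tfrac{1}{\log(\beta^{-1})}$, which is \eqref{shiftlowerbound}.

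Thus the only genuinely new point is the two-step digit bound $\#(f^2(n))\le \alpha\,\#(n)+2$; everything else is bookkeeping imported from the earlier proofs, and the lower bound is entirely routine given Theorem~\ref{thm:t1b3pers}. I expect the verification of that two-step estimate (in particular the identity $f^2(n)=f(M)$, keeping careful track of the trailing-zero block $b^{c}$) to be the main place where care is needed. I note in passing that iterating the same idea — the identity $f^{j+1}(n)=f(M_j)$ with $M_j$ defined recursively from $M_{j-1}$ the way $M_1=M$ is defined from $n$, yielding $\#(M_{j+1})\le 1+\alpha\,\#(M_j)$ — would in fact give the sharper constant $\tfrac{1}{\log(\alpha^{-1})}$ in \eqref{shiftupperbound}; but the stated bound suffices here.
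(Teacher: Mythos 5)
Your argument is correct and follows essentially the same route as the paper's: the upper bound rests on the identical key identity $f^{2}(n)=f(P)$ (the paper's $P$ is your $M$, the product of $1+d$ over the digits $d\neq b-1$), giving contraction by $\alpha$ once every two steps and hence the factor $2$, while the lower bound reuses the integers $n_t$ from Theorem~\ref{thm:t1b3pers} and repeats the logarithmic manipulation of Theorem~\ref{thm:perserdos} exactly as the paper does. Your closing observation that iterating the identity to $f^{j+1}(n)=f(M_j)$ with $\#(M_{j+1})\leq 1+\alpha\,\#(M_j)$ would sharpen the constant in \eqref{shiftupperbound} to $1/\log(\alpha^{-1})$ also looks correct, and would improve on the bound as stated.
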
	
	\begin{proof}
		Let us write $f=S_{1,b}$ in this proof. We treat the upper and lower estimates separately.
		
		\begin{enumerate}
			\item[(i)] \emph{Upper bound}. Given $n>b$, write $n=(d_1d_2\cdots d_k)_b$ (where $k\leq 1+\log_{b}{n}$; note that this notation differs from the one in section \ref{sec:def}) and let $\ell\leq k$ be the number of digits $d_i$ that are equal to $b-1$. Then $f(n)=b^\ell\cdot P$, where $P=\prod_{d_i\leq b-2} (1+d_i)$. This in turn implies that $f^2(n)=f(P)$. But the number of digits of $P$ in base $b$ is at most 
			\begin{equation*}
			1+\log_b{P} = 1+ \sum_{d_i\leq b-2}\log_b{(1+d_i)} \leq 1+(k-\ell)\alpha\leq 1+k\alpha,
			\end{equation*}
			where $\alpha=\log_b{(b-1)}<1$. Hence we have
			\begin{equation*}
			f^2(n)=f(P)\leq b^{1+k\alpha}\leq b^{1+(1+\log_b{n})\alpha}=b^{1+\alpha}n^\alpha.
			\end{equation*}
			By induction, it follows that
			\begin{equation*}
			f^{2j}\leq \left(b^{1+\alpha}\right)^{1+\alpha+\alpha^2+\cdots+\alpha^{j-1}}n^{\alpha^{j}}
			< b^{(1+\alpha)/(1-\alpha)}n^{\alpha^{j}}, \forall j\geq 1.
			\end{equation*}
			Let $j_0$ be the smallest natural number such that $n^{\alpha^{j_0}}<2$, i.e.,
			\begin{equation}\label{jay0}
			j_0= \left\lceil \frac{\log\log{n} - \log\log{2}}{\log{(\alpha^{-1})}}\right\rceil.
			\end{equation}
			Then we have $f^{2j_0}(n)\in A =\{1,2,\ldots,M\}$, where $M=\left\lceil 2b^{(1+\alpha)/(1-\alpha)}\right\rceil$. We claim that $A$ is $f^2$-invariant, i.e., $f^2(A)\subseteq A$. Indeed, if $a\in A$ then 
			\begin{equation*}
			f^2(a)\leq b^{1+\alpha}a^{\alpha} \leq b^{1+\alpha} M^{\alpha} \leq b^{1+\alpha} \left(2b^{(1+\alpha)/(1-\alpha)}\right)^{\alpha} = 2^\alpha b^{(1+\alpha)/(1-\alpha)} < M,
			\end{equation*}
			and so $f^2(a)\in A$ as claimed. But now, by the simple remark preceding the statement of this theorem, every $a\in A$ is eventually periodic, and if $m\in \mathbb{N}$ is the smallest number such that $f^{2m}(a)\in \mathrm{Per}(f^2)\subseteq \mathrm{Per}(f)$, then $m\leq |A| = M$. Summarizing, we have proved that, starting from $n>b$: (i) after $2j_0$ iterates under $f$, we reach some $a_0\in A$; (ii) after $j_1\leq 2M$ further iterates, we reach a periodic cycle, i.e., $f^{j_1}(a_0)\in \mathrm{Per}(f)$. Therefore we have $\nu_{1,b}(n)\leq 2j_0 + 2M$, and from \eqref{jay0} we deduce that 
			\begin{equation*}
			\nu_{1,b}(n)\leq 2\left( \frac{\log\log{n} - \log\log{2}}{\log{(\alpha^{-1})}} \right) +2(M+1).
			\end{equation*}
			This shows that 
			\begin{equation*}
			\limsup_{n\to\infty} \frac{\nu_{1,b}(n)}{\log\log{n}}\leq \frac{2}{\log\left(\alpha^{-1}\right)}.
			\end{equation*}
			and this is precisely \eqref{shiftupperbound}. 
			
			\item[(ii)] \emph{Lower bound}. Here we proceed just as in the proof of the lower bound in Theorem \ref{thm:t1b3pers}. Once again, for each natural number $t$ we consider $n_t=\left(\prod_{p_i \text{ prime}, p_i<b, p_i \nmid b} p_i\right)^{m_t}$, where $m_t$ is \emph{the smallest} positive integer such that 
			\begin{equation*}
			m_t\left(\frac{1}{2b}\right)^{t-1}\left(\log_b{2}\right)^{t-1} \geq C=\max\{b,N\},
			\end{equation*}
			and where $N$ is given by Conjecture \ref{conj:uniformdistr} taking $\varepsilon=\dfrac{1}{2b}$. Then, as we saw in the proof of Theorem \ref{thm:t1b3pers}, the $1$-shifted persistence of $n_t$ is at least $t$, and the same computations performed in the proof of Theorem \ref{thm:perserdos} yield
			\begin{equation}\label{shiftnuestimate}
			\nu_{1,b}(n_t) \geq t > \frac{\log_{b}\log_{b}{n_t}}{\log_{b}{\left(\beta^{-1}\right)}} + K=\frac{\log\log_{b}{n_t}}{\log{\left(\beta^{-1}\right)}} + K,
			\end{equation}
			for some constant $K$. Dividing the resulting inequality in \eqref{shiftnuestimate} by $\log\log{n}_t$ and letting $t\to\infty$, we arrive at \eqref{shiftlowerbound} as desired.  
		\end{enumerate}
	\end{proof}

	\subsection{$2$-shifted problem}
	
	In this section, we show that Conjectures \ref{conj:equidist} and \ref{conj:uniformdistr} imply that every integer reaches a cycle or a fixed point under iteration by $S_{2,b}$. Before stating the result precisely, we start with a lemma.	
	\begin{lemma}\label{lem:forthmt2blarge}
		Let $b\geq 5$ be a positive integer. Then
		\begin{enumerate}
			\item $(b+1)!^{\frac{\log_b{(b+1)}}{b}}<b$;
			
			\item $(b+1)^{\log_b(b-1)}<b$;
			
			\item $(b+1)^{2\log_b(b-2)+\log_b(b+1)}<b^3$.
		\end{enumerate}  
	\end{lemma}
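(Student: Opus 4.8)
The plan is to take base-$b$ logarithms in all three inequalities, turning them into statements about sums and products of logarithms, and then to reduce those to elementary estimates. Two devices recur: the identity $x^{\log_b y}=y^{\log_b x}$ for $x,y>0$, which swaps a base with an exponent, and the AM--GM inequality in the form $uv\le\bigl(\tfrac12(u+v)\bigr)^2$, which collapses a product of two logarithms into the square of a single one. With these, part (2) is immediate: taking $\log_b$ it becomes $\log_b(b-1)\cdot\log_b(b+1)<1$, and by AM--GM the left side is at most $\bigl(\tfrac12\log_b((b-1)(b+1))\bigr)^2=\bigl(\tfrac12\log_b(b^2-1)\bigr)^2$, which is strictly less than $\bigl(\tfrac12\log_b b^2\bigr)^2=1$ since $b^2-1<b^2$. (This works verbatim for every $b\ge2$.)

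For part (3), the identity $x^{\log_b y}=y^{\log_b x}$ gives $(b+1)^{2\log_b(b-2)}=(b-2)^{2\log_b(b+1)}$, so the left-hand side of (3) equals $\bigl((b-2)^2(b+1)\bigr)^{\log_b(b+1)}$. Taking $\log_b$, multiplying by $3$, and writing $3\log_b(b+1)=\log_b\bigl((b+1)^3\bigr)$, the claim becomes $\log_b\bigl((b+1)^3\bigr)\cdot\log_b\bigl((b-2)^2(b+1)\bigr)<9$; by AM--GM it suffices that $\log_b\bigl((b+1)^4(b-2)^2\bigr)<6$, i.e.\ that $(b+1)^4(b-2)^2<b^6$. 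This is clear because $(b+1)^4(b-2)^2=\bigl((b+1)^2(b-2)\bigr)^2=(b^3-3b-2)^2<(b^3)^2$. For $b\ge5$ all numbers whose logarithms appear exceed $1$, so no step is in danger.

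Part (1) is the delicate one. Taking $\log_b$, the statement is equivalent to $\log_b(b+1)\cdot\log_b\bigl((b+1)!\bigr)<b$. The bound on $(b+1)!$ that one gets from AM--GM alone ($(b+1)!\le\bigl(\tfrac{b+2}{2}\bigr)^{b+1}$) turns out to be too weak at $b=5$, so I would first prove by induction the sharper estimate $(b+1)!<\bigl(\tfrac{b+1}{2}\bigr)^{b+1}$ for $b\ge5$: the base case is $6!=720<729=3^6$, and the inductive step reduces to $2\le\bigl(1+\tfrac1{b+1}\bigr)^{b+1}$, which holds since $\bigl(1+\tfrac16\bigr)^6>2$. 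This gives $\log_b\bigl((b+1)!\bigr)<(b+1)\log_b\tfrac{b+1}{2}$, so it remains to show $(b+1)\log_b(b+1)\cdot\log_b\tfrac{b+1}{2}<b$. Writing $\log_b(b+1)=1+\varepsilon$ and $\log_b\tfrac{b+1}{2}=1-\delta$ with $\varepsilon,\delta>0$ and $\varepsilon+\delta=\log_b2$, expansion turns this into $(b+1)\bigl(\log_b2-2\varepsilon+\varepsilon\delta\bigr)>1$, for which (dropping the positive $\varepsilon\delta$) it is enough that $(b+1)(\log_b2-2\varepsilon)>1$. Since $\varepsilon=\frac{\ln(1+1/b)}{\ln b}<\frac1{b\ln b}$ we get $\log_b2-2\varepsilon>\frac{b\ln2-2}{b\ln b}$, so everything reduces to the elementary inequality $(b+1)(b\ln2-2)>b\ln b$ for integers $b\ge5$; after dividing by $b$ this reads $b\ln2-\ln b>2-\ln2+\tfrac2b$, and the right side is below $1.71$ while the left side equals $5\ln2-\ln5>1.85$ at $b=5$ and is strictly increasing in $b$ (its successive differences being $\ln2-\ln(1+1/b)>0$).

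The only real obstacle is keeping enough slack in part (1): replacing the AM--GM bound on $(b+1)!$ by $\bigl(\tfrac{b+1}{2}\bigr)^{b+1}$ is exactly what creates the room needed to push the estimate through at the smallest admissible base, and indeed $(b+1)(b\ln2-2)>b\ln b$ already fails for $b=4$, so the hypothesis $b\ge5$ is used in an essential way. Parts (2) and (3), by contrast, are comfortably handled by AM--GM together with an elementary polynomial inequality.
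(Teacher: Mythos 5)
Your proof is correct, and it reaches all three inequalities by a genuinely different route from the paper. For part (2) the paper shows the equivalent inequality $\frac{\log(b-1)}{\log b}<\frac{\log b}{\log(b+1)}$ by checking that $x\mapsto\frac{\log x}{\log(x+1)}$ is increasing; your AM--GM step $\log_b(b-1)\log_b(b+1)\le\bigl(\tfrac12\log_b(b^2-1)\bigr)^2<1$ is an equally valid and arguably cleaner one-liner (and, as you note, works for all $b\ge2$). For part (3) the paper applies Jensen to get $2\log(b-2)+\log(b+1)<3\log(b-1)$ and then falls back on part (2), whereas you use the exponent-swapping identity $x^{\log_b y}=y^{\log_b x}$ plus AM--GM to reduce everything to the polynomial inequality $(b+1)^4(b-2)^2=(b^3-3b-2)^2<b^6$; both are fine, and yours has the advantage of ending in a purely algebraic verification. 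The real divergence is in part (1): the paper invokes the Stirling-type bound $n!\le e(n/e)^n\sqrt n$, the mean value theorem applied to $g(x)=x(\log x)^2$, and a monotonicity argument for an auxiliary function $h(b)$, while you prove the sharper-than-AM--GM estimate $(b+1)!<\bigl(\tfrac{b+1}{2}\bigr)^{b+1}$ by a short induction and then finish with the substitution $\log_b(b+1)=1+\varepsilon$, $\log_b\tfrac{b+1}{2}=1-\delta$, $\varepsilon+\delta=\log_b2$, reducing to $(b+1)(b\ln2-2)>b\ln b$, which you verify at $b=5$ and propagate by monotonicity. I checked the numerics at the boundary case ($6!=720<729$; $5\ln2-\ln5\approx1.856$ versus $2-\ln2+\tfrac25\approx1.707$) and the inductive step $2\le(1+\tfrac1{b+1})^{b+1}$; everything holds, and your observation that the plain AM--GM bound $\bigl(\tfrac{b+2}{2}\bigr)^{b+1}$ fails at $b=5$ correctly identifies where the slack is needed. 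Your argument is more elementary overall, avoiding both Stirling's formula and the mean value theorem, at the cost of the explicit induction and a slightly longer chain of reductions.
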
	
	\begin{proof}
		Taking logarithms and rearranging terms, the first inequality is equivalent to
		\begin{equation}\label{eq:lem15}
		b(\log{b})^2 - \log(b+1)!\cdot \log(b+1) > 0.
		\end{equation}
		We will use the following well-known upper bound for $n!$, valid for all positive integers $n$:
		
		\begin{equation*}
		n! \leq e\left(\frac{n}{e}\right)^n\sqrt{n}.
		\end{equation*}
		Applying this bound to the left-hand side of inequality (\ref{eq:lem15}), we get
		\begin{align*}
		b(\log{b})^2 - \log(b+1)!&\cdot \log(b+1)  \\ &\geq b(\log{b})^2-(b+1)(\log(b+1))^2+b\log(b+1)-\frac{(\log(b+1))^2}{2}.
		\end{align*}
		By the mean value theorem, $b(\log{b})^2-(b+1)(\log(b+1))^2 = -g'(c)$ for some $c \in (b,b+1)$, where $g(x)=x(\log{x})^2$. As $g'(x)=(\log{x})^2+2\log{x}$ is increasing, we get that $b(\log{b})^2-(b+1)(\log(b+1))^2 \geq -(\log(b+1))^2-2\log(b+1)$. This implies that
		
		\begin{align*}
		b(\log{b})^2 - \log(b+1)!\cdot \log(b+1) & \geq b\log(b+1)-\frac{3(\log(b+1))^2}{2}-2\log(b+1).\\
		\end{align*}
		
		Let $h(b) = b\log(b+1)-\frac{3(\log(b+1))^2}{2}-2(\log(b+1))$. It is readily checked that $h(5)> 0$. Moreover, $h'(b)=\frac{(b - 2) (\log(b + 1) + 1)}{b + 1} > 0$ for every $b > 2$. This implies that $h(b) > 0$ for all $b \geq 5$ and concludes the proof of the first item.
		
		The inequality of the second item is equivalent, taking logarithms and rearranging terms, to
		\begin{equation}\label{eq:log}
			\frac{\log(b-1)}{\log{b}}<\frac{\log{b}}{\log(b+1)}.
		\end{equation}
		Inequality (\ref{eq:log}) is a straightforward consequence of the fact that $f(x)=\frac{\log{x}}{\log(x+1)}$ is increasing for $x>1$, which follows immediately from $f'(x)=\frac{(x+1)\log(x+1)-x\log{x}}{x(x+1)(\log(x+1))^2}$.
		
		Finally, the third inequality is equivalent to 
		\begin{equation}\label{eq:log2}
			\log(b+1)(2\log(b-2)+\log(b+1))<3(\log{b})^2.
		\end{equation}
		This can be proved using that $\log$ is a concave function and applying Jensen's inequality
		to $2\log(b-2)+\log(b+1)$:
		\begin{equation*}
			2\log(b-2)+\log(b+1)<3\log(b-1).
		\end{equation*}
		The left-hand side of (\ref{eq:log2}) is, then, smaller than $3\log(b-1)\log(b+1)$. Inequality (\ref{eq:log}) implies that this is less than $3(\log{b})^2$ and concludes the proof.
	\end{proof}	
	\begin{theorem}\label{thm:f23converges}
		Conjecture \ref{conj:equidist} (resp., Conjecture \ref{conj:uniformdistr}) implies that for every $n \geq 1$, the iterates of $S_{2,3}(n)$ (resp., $S_{2,b}(n)$, for $b \geq 4$) reach a cycle or a fixed point.
	\end{theorem}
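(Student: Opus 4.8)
The plan is to show that every orbit $\bigl(S_{2,b}^k(n)\bigr)_{k\ge 0}$ is bounded; since it lives in $\mathbb{N}$, it is then eventually periodic, which is the assertion. The structural fact I would lean on is that $S_{2,b}(n)$ is always a product of integers from $\{2,3,\dots,b+1\}$, so $S_{2,b}(n)\le (b+1)^{\#(n)}$ and $S_{2,b}(n)=\prod_{p\le b+1}p^{e_p}$ over primes $p\le b+1$. Writing $f=S_{2,b}$, I would aim to produce constants $K=K(b)$ and $T=T(b)$ with $\#(f^K(n))<\#(n)$ whenever $\#(n)>T$; together with the crude estimate $\#(f(m))\le \#(m)\log_b(b+1)+1$ applied to the (finitely many) intermediate iterates, this confines every orbit to a fixed finite window of $\mathbb{N}$, giving eventual periodicity.

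To extract the decrease, I would first strip trailing zeros: write $f(n)=b^j r$ with $b\nmid r$. Since the base-$b$ expansion of $b^j r$ is that of $r$ followed by $j$ zeros, $f^2(n)=2^j f(r)$, and $b\nmid r$ forces some prime $p_0\mid b$ to divide $r$ only to a bounded power $a_0<b$. Applying Conjecture \ref{conj:uniformdistr} (Conjecture \ref{conj:equidist} when $b=3$) to each of the finitely many pairs $(p_0,a_0)$, with the admissible prime set $F=\{p\le b+1:p\ne p_0\}$ and taking the largest resulting threshold $N$, I get: $r$ is $\varepsilon$-equidistributed in base $b$ whenever some prime of $F$ divides $r$ to a power $\ge N$. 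I then split into two cases. If that happens, then $f(r)\le((b+1)!)^{(1/b+\varepsilon)\#(r)}$ (using $\prod_{d=0}^{b-1}(d+2)=(b+1)!$), and since $j+\#(r)\le\log_b f(n)+O(1)\le\log_b(b+1)\cdot\#(n)+O(1)$ while the coefficient of $\#(r)$ dominates that of $j$ (because $(b+1)!>2^b$), one is led to $\#(f^2(n))\le(1/b+\varepsilon)\log_b(b+1)\log_b((b+1)!)\cdot\#(n)+O(1)$. If instead every prime of $F$ divides $r$ to a power $<N$, then $r$ is bounded, $f^2(n)=2^j f(r)\le C\cdot 2^j$, and $b^j\le f(n)\le(b+1)^{\#(n)}$ yields $\#(f^2(n))\le\log_b(b+1)\log_b 2\cdot\#(n)+O(1)$.

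For $b\ge5$ these two bounds finish the job: the first coefficient being $<1$ is exactly item (1) of Lemma \ref{lem:forthmt2blarge}, and $\log_b(b+1)\log_b 2\le\log_b(b+1)\log_b(b-1)<1$ is item (2). The handful of configurations for which the bounds are not yet strictly below $\#(n)$ after two steps — roughly, $n$ whose digits are overwhelmingly $b-1$, $b-2$ or $b-3$, so that $r$ is essentially a power of $b+1$, of a prime dividing $b$, or of $b-1$ — I would dispose of with one further iteration and item (3) of the lemma. For $b=3$ and $b=4$, item (1) just fails (there $((b+1)!)^{\log_b(b+1)/b}>b$), so instead I would iterate: after one step $f^k(n)$ has the form $2^{x_k}3^{y_k}$ (for $b=3$) or $2^{x_k}3^{y_k}5^{z_k}$ (for $b=4$), and once $x_k$ is large enough for Conjecture \ref{conj:equidist} to make $2^{x_k}$ be $\varepsilon$-equidistributed, the induced map on the exponent vector is affine-linear up to $O(\varepsilon)$ with spectral radius $<1$; hence $\#(f^k(n))$ is contracted over a bounded number of steps until the orbit re-enters a bounded regime, and one concludes as before.

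The hard part is precisely that $f$ genuinely increases both $n$ and $\#(n)$ on strings of digits $b-1$ (there $f(n)=(b+1)^{\#(n)}>b^{\#(n)}>n$), so no single-step decrease is available and a multi-step contraction must be set up and verified; this is aggravated by the unbounded power of $2$ that trailing zeros feed into the next iterate, and by the need to $b$-reduce $f(n)$ before the equidistribution conjectures apply. The genuinely delicate inputs are Lemma \ref{lem:forthmt2blarge} for $b\ge5$ and the spectral-radius estimate for $b=3,4$; everything else is bookkeeping with $\varepsilon$'s and $O(1)$'s.
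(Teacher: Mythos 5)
Your proposal follows the same strategy as the paper's proof: write $f(n)=b^j r$ with $b\nmid r$, so that $f^2(n)=2^j f(r)$; feed the $b$-free part $r$ into the equidistribution conjectures (applied for each of the finitely many deficient prime powers $p_0^{a_0}$ with $p_0\mid b$, exactly as the paper does with its proper divisors $d$ and prime sets $F_p$); and extract a contraction of the digit count over a bounded number of iterates, with Lemma \ref{lem:forthmt2blarge} supplying the numerical inequalities for $b\geq 5$ and an explicit analysis of the exponent vector for $b=3,4$. The one genuine organizational difference is your case split for $b\geq 5$: you split according to whether some prime of $F$ divides $r$ to a power $\geq N$ (so $r$ is $\varepsilon$-equidistributed and the contraction coefficient is $(1/b+\varepsilon)\log_b(b+1)\log_b\left((b+1)!\right)<1$, i.e., item (1) of the lemma) or all such exponents are $<N$ (so $r$ is bounded, $f^2(n)\leq C\cdot 2^j$, and the coefficient is $\log_b 2\cdot\log_b(b+1)<1$). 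This dichotomy is exhaustive and both coefficients are genuinely below $1$, so --- contrary to your hedge --- there are no leftover configurations and item (3) of the lemma is not needed in your organization; the paper needs it only because it splits instead on the digit counts $n_{b-1}$, $n_{b-2}$, $n_{b-3}$ of $n$, which leaves a residual third case. Your version of the $b\geq 5$ argument is, if anything, cleaner. The least developed part of your proposal is $b=3,4$, which is where the paper spends most of its effort: the spectral-radius claim for the induced affine map on the exponent vector is correct (the relevant weighted norm $x\log_3 2+y$, resp.\ $x\log_4 2+y\log_4 3+z\log_4 5$, contracts by a factor bounded away from $1$), but to make it a proof you must, as the paper does over four (resp.\ three) explicit steps, check that the exponents remain above the equidistribution threshold $N$ at every intermediate step, and treat separately the regime where the exponent of $2$ (resp.\ of $3$ and $5$) stays below $N$ --- there the next iterate is $2^{y_k}\cdot O(1)$ and contracts directly, as in your bounded-$r$ case. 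With those details written out, the argument is complete.
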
	
	\begin{proof}
		We will prove the result for $b=3$, $b=4$ and $b \geq 5$ separately. In any case, to show that the sequence of iterates starting at any positive integer stabilizes, we will prove the following stronger statement: there exist integers $N_0$ and $k$ and a constant $0 < c_b < 1$ (which depends only on $b$) such that, for every $n \geq N_0$, $S_{2,b}^j(n) \leq n^{c_b}$ (or, equivalently, $S_{2,b}^j(n) \leq C\cdot n^{c'_b}$ for some constant $C$ independent of $n$ and $0 < c'_b < 1$) for some $1 \leq j \leq k$. In the cases $b=3$, $b=4$ and $b \geq 5$, 
		we will prove this statement with $k=4$, $k=3$ and $k=2$, respectively.

		\begin{enumerate}[(i)]
			
			\item  \textit{Base 3}
			
			First, put $f=S_{2,3}$ to simplify the notation. We will use the following instance of Conjecture \ref{conj:equidist}, which corresponds to $q=3$, $F=\{2\}$ and $a=1$: for every $\varepsilon > 0$, there exists $N$ such that $2^t$ is $\varepsilon$-equidistributed in base $3$ whenever $t \geq N$.
			
			Let $a_0, b_0, c_0$ denote, respectively, $\#0(n), \#1(n), \#2(n)$; and, for $k \geq 1$, let $a_k, b_k, c_k$ denote,  respectively, $\#0(2^{b_{k-2}+a_{k-1}+2c_{k-1}}), \#1(2^{b_{k-2}+a_{k-1}+2c_{k-1}}),
			\#2(2^{b_{k-2}+a_{k-1}+2c_{k-1}})$, where we put $b_{-1}=0$. With this notation, we have
			\begin{equation*}
			f^k(n)=2^{b_{k-2}+a_{k-1}+2c_{k-1}}\cdot 3^{b_{k-1}} 
			\end{equation*} 
			for every $k \geq 1$.

			Let us write $\alpha = \log_3 2 \approx 0.631$. Moreover, fix $\varepsilon = 0.001$ and write $\delta = 1/3-\varepsilon$. Let $N$ be such that $2^t$ is $\varepsilon$-equidistributed in base $3$ for every $t \geq N$.
			
			For any integer $n$, we have $f(n) = 2^{a_0+2c_0}\cdot 3^{b_0}$ and $f^2(n)=2^{b_0}f(2^{a_0+2c_0})$.
			
			Let $n \geq 3^{3M}$ be an integer, where $M \geq N/(\delta\alpha)^2$. We know that $a_0+b_0+c_0 \geq \log_3 n \geq 3M$. This implies that either $b_0 \geq 2M$ or $a_0+c_0 \geq M$.
			
			Suppose first that $a_0+c_0 < M$. Then $b_0 \geq 2M$, and using the trivial bound $f(m) \leq 4^{\log_3 m + 1}$, which holds for every $m$ since each of the at most $\log_3 m + 1$ digits of $m$ is mapped to a factor $2$, $3$ or $4$ in $f(m)$, we get, using that $3^{a_0+b_0+c_0-1} \leq n \leq 3^{a_0+b_0+c_0}$, that
			
			\begin{align*}
			\frac{f^2(n)}{n}&\leq \frac{2^{b_0}f(2^{a_0+2c_0})}{3^{a_0+b_0+c_0-1}}\\
			& \leq \frac{2^{b_0}\cdot 4^{\log_3(2^{a_0+2c_0})+1}}{3^{a_0+b_0+c_0-1}}\\
			&= c\cdot 3^{(\alpha-1) b_0+(2\alpha^2-1)a_0+(4\alpha^2-1)c_0}\\
			& \leq  c\cdot \left(3^{b_0}\right)^{\alpha-1+(4\alpha^2-1)/2}\\
			&\leq c\cdot \left(\frac{n}{3^M}\right)^{\alpha-1+(4\alpha^2-1)/2},
			\end{align*}
			for some positive constant $c$, since $1-2\alpha^2$, $1-\alpha > 0$ and $1-4\alpha^2 < 0$. As $\alpha-1+(4\alpha^2-1)/2<0$, this concludes the proof in this case.
			
			From now on, we may assume that $a_0+c_0 \geq M$. As $M \geq N/(\delta\alpha)^2 \geq N$, we know that $2^{a_0+2c_0}$ is $\varepsilon$-equidistributed, i.e., $a_1,b_1,c_1$ belong to $((1/3-\varepsilon)\alpha(a_0+2c_0),(1/3+\varepsilon)\alpha(a_0+2c_0))$. In particular, writing $\beta$ for $1/3+\varepsilon$, we have that $a_1,b_1,c_1$ are bounded from above by 
			\begin{equation*}
				\beta\alpha(a_0+2c_0).
			\end{equation*}			
			As $f^2(n)=2^{b_0+a_1+2c_1}\cdot 3^{b_1}$, we have $f^3(n) = 2^{b_1}f(2^{b_0+a_1+2c_1})$. As $a_1 \geq \delta\alpha(a_0+2c_0)\geq\delta\alpha M \geq N$, the number $2^{b_0+a_1+2c_1}$ is $\varepsilon$-equidistributed, i.e., the quantities $a_2, b_2, c_2$ belong to the interval $((1/3-\varepsilon)\alpha(b_0+a_1+2c_1),(1/3+\varepsilon)\alpha(b_0+a_1+2c_1))$, and hence are bounded from above by
			\begin{align*}
				\beta\alpha(b_0+a_1+2c_1)&\leq \beta\alpha(b_0+3\beta\alpha(a_0+2c_0))\\
				&=3\beta^2\alpha^2(a_0+2c_0)+\beta\alpha b_0.
			\end{align*}
			On the other hand, the numbers $a_2, b_2, c_2$ are greater than $\delta\alpha(b_0+a_1+2c_1)>\delta^2\alpha^2(a_0+2c_0) \geq N$, so $2^{a_2+2c_2+b_1}$ is $\varepsilon$-equidistributed. This implies that 
			\begin{align*}
			a_3,b_3,c_3 &\leq \beta\alpha(b_1+a_2+2c_2)\\
			&\leq\beta\alpha(\beta\alpha(a_0+2c_0)+9\beta^2\alpha^2(a_0+2c_0)+3\beta\alpha b_0)\\
			&=\beta^2\alpha^2(1+9\beta\alpha)(a_0+2c_0)+3\beta^2\alpha^2 b_0.
		    \end{align*}
		    Together, these estimates imply that
		    \begin{align*}
		    	\alpha(b_2+a_3+2c_3)+b_3&\leq\alpha(3\beta^2\alpha^2(a_0+2c_0)+\beta\alpha b_0)+\\
		    	&\qquad(3\alpha+1)(\beta^2\alpha^2(1+9\beta\alpha)(a_0+2c_0)+3\beta^2\alpha^2 b_0)\\
		    	&=\beta^2\alpha^2(3\alpha+(3\alpha+1)(1+9\beta\alpha))(a_0+2c_0)+\beta\alpha^2(1+3(3\alpha+1)\beta)b_0.
		    \end{align*}
			Finally, this bound implies that
			\begin{align*}
			\frac{f^4(n)}{n} & = \frac{2^{b_2+a_3+2c_3}\cdot 3^{b_3}}{3^{a_0+b_0+c_0-1}}\\
			&\leq 3\cdot\frac{3^{\alpha(b_2+a_3+2c_3)+b_3}}{3^{a_0+b_0+c_0}}\\
			&\leq 3\cdot\frac{3^{\beta^2\alpha^2(3\alpha+(3\alpha+1)(1+9\beta\alpha))(a_0+2c_0)+\beta\alpha^2(1+3(3\alpha+1)\beta)b_0}}{3^{a_0+b_0+c_0}}\\
			&= 3\cdot{3^{(\beta^2\alpha^2(3\alpha+(3\alpha+1)(1+9\beta\alpha))-1)a_0+(\beta\alpha^2(1+3(3\alpha+1)\beta)-1)b_0+(2\beta^2\alpha^2(3\alpha+(3\alpha+1)(1+9\beta\alpha))-1)c_0}}.
			\end{align*}
			This gives the result, since
			$\beta^2\alpha^2(3\alpha+(3\alpha+1)(1+9\beta\alpha)) < \frac{1}{2}$ and $\beta\alpha^2(1+3(3\alpha+1)\beta)<1$. 
			
			\item \textit{Base 4}
			
			We now consider base $4$. As usual, put $f=S_{2,4}$ to ease the notation. 
			We will prove the following: for every sufficiently large integer $n$, one of the numbers $f(n)$, $f^2(n)$, $f^3(n)$ is at most $cn^{c_b}$ for some $c>0$ and $0<c_b<1$. We will apply Conjecture \ref{conj:uniformdistr} twice, with $q=4$, $F=\{3,5\}$, $a=1$ and $q=4$, $F=\{3,5\}$, $a=2$ to get the following statement: for every $\varepsilon > 0$, there is $N$ such that $3^x\cdot 5^y$ and $2\cdot 3^x \cdot 5^y$ are $\varepsilon$-equidistributed whenever $x \geq N$ or $y \geq N$. 		
			
			For any integer $n$, if we put $a_0=\#0(n)$, $b_0=\#1(n)$, $c_0=\#2(n)$ and $d_0=\#3(n)$, we have $f(n) = 4^{\lfloor{a_0/2}\rfloor+c_0}\cdot 2^{{a'_0}}\cdot 3^{b_0}\cdot 5^{d_0}$ and $f^2(n)=2^{\lfloor{a_0/2}\rfloor+c_0}\cdot f(2^{{a_0}'}\cdot 3^{b_0}\cdot 5^{d_0})$, where ${x'}$ denotes the remainder of the integer $x$ modulo $2$.
			
			Let $\varepsilon = 0.001$ and let $n \geq 4^{4M}$ be an integer, where $M \geq N/(2(1/4-\varepsilon)\log_4{3})$. We know that $a_0+b_0+c_0+d_0 \geq \log_4 n \geq 4M$. This implies that either $b_0 \geq M$, or $d_0 \geq M$, or $a_0+c_0 \geq 2M$.
			
			Suppose first that $b_0 < M$ and $d_0 < M$. Then $a_0+c_0 \geq 2M$, and using the trivial bound $f(m) \leq 5^{\log_4 m + 1}$, which holds for every $m$ since each of the at most $\log_4 m + 1$ digits of $m$ is mapped to a factor $2$, $3$, $4$, or $5$ in $f(m)$, we get that
			
			\begin{align*}
			\frac{f^2(n)}{n}&\leq\frac{2^{\lfloor{a_0/2}\rfloor+c_0}\cdot f(2^{{a'_0}}\cdot 3^{b_0}\cdot 5^{d_0})}{4^{a_0+b_0+c_0+d_0-1}}\\
			& \leq 4\cdot\frac{2^{a_0/2+c_0}\cdot 5^{\log_4(2\cdot 3^{b_0}\cdot 5^{d_0})+1}}{4^{a_0+b_0+c_0+d_0}}\\
			&=c \cdot 4^{-3a_0/4-c_0/2+(\log_4{5}\log_4{3}-1)b_0+((\log_4{5})^2-1)d_0}\\
			&\leq c(M) \cdot (4^{a_0+c_0})^{-1/2}\\
			& \leq c(M) \cdot \left(\frac{n}{4^{2M}}\right)^{-1/2},
			\end{align*}
			where $c(M)$ is some constant dependent on $M$ only. This proves the claim in the first case.
			
			From now on, we assume that $b_0 \geq M$ or $d_0 \geq M$. As $M \geq N/(2(1/4-\varepsilon)\log_4{3})\geq N$, we know that $2^{{a'_0}}\cdot 3^{b_0}\cdot 5^{d_0}$ is $\varepsilon$-equidistributed, i.e., $\#d(2^{{a'_0}}\cdot 3^{b_0}\cdot 5^{d_0})$ belongs to the interval $((1/4-\varepsilon)\log_4(2^{{a'_0}}\cdot 3^{b_0}\cdot 5^{d_0}),(1/4+\varepsilon)\log_4(2^{{a'_0}}\cdot 3^{b_0}\cdot 5^{d_0}))$, for $d=0,1,2,3$. Put $a_1=\#0(2^{{a'_0}}\cdot 3^{b_0}\cdot 5^{d_0})$, $b_1=\#1(2^{{a'_0}}\cdot 3^{b_0}\cdot 5^{d_0})$, $c_1=\#2(2^{{a'_0}}\cdot 3^{b_0}\cdot 5^{d_0})$, and $d_1=\#3(2^{{a'_0}}\cdot 3^{b_0}\cdot 5^{d_0})$. In particular, $a_1$, $b_1$, $c_1$ and $d_1$ are bounded from above by $(1/4+\varepsilon)\log_4(2^{{a'_0}}\cdot 3^{b_0}\cdot 5^{d_0}) \leq (1/4+2\varepsilon)\log_4(3^{b_0}\cdot 5^{d_0})\leq \beta\alpha(b_0+d_0)$ for $M$ large enough (namely, for $(1/4+\varepsilon)\log_4(2\cdot 3^{M}\cdot 5^{M}) \leq (1/4+2\varepsilon)\log_4(3^{M}\cdot 5^{M})$ to hold), writing $\beta$ for $1/4+2\varepsilon$ and $\alpha$ for $\log_4{5}$.
			
			We have 
			\begin{equation*}f^2(n)=2^{\lfloor{a_0/2}\rfloor+c_0}\cdot f(2^{{a'_0}}\cdot 3^{b_0}\cdot 5^{d_0})
			=2^{\lfloor{a_0/2}\rfloor+c_0+a_1+2c_1}\cdot 3^{b_1} \cdot 5^{d_1},
			\end{equation*} 
			and then
			\begin{equation*}
			f^3(n)=f(2^{\lfloor{a_0/2}\rfloor+c_0+a_1+2c_1}\cdot 3^{b_1} \cdot 5^{d_1})=2^{\lfloor{(\lfloor{a_0/2}\rfloor+c_0+a_1+2c_1)/2}\rfloor}f(2^{(\lfloor{a_0/2}\rfloor+c_0+a_1+2c_1)'}\cdot 3^{b_1}\cdot 5^{d_1}).
			\end{equation*}
			Putting
			$a_2=\#0(2^{(\lfloor{a_0/2}\rfloor+c_0+a_1+2c_1)'}\cdot 3^{b_1}\cdot 5^{d_1})$, 
			$b_2=\#1(2^{(\lfloor{a_0/2}\rfloor+c_0+a_1+2c_1)'}\cdot 3^{b_1}\cdot 5^{d_1})$, 
			$c_2=\#2(2^{(\lfloor{a_0/2}\rfloor+c_0+a_1+2c_1)'}\cdot 3^{b_1}\cdot 5^{d_1})$, and 
			$d_2=\#3(2^{(\lfloor{a_0/2}\rfloor+c_0+a_1+2c_1)'}\cdot 3^{b_1}\cdot 5^{d_1})$, we have
			
			\begin{equation}\label{eq:est}
			f^3(n)\leq 2^{a_0/4+c_0/2+a_1/2+c_1+a_2+2c_2}\cdot 3^{b_2} \cdot 5^{d_2}.
			\end{equation}
			
			As $b_1 \geq (1/4-\varepsilon)\log_4(2^{{a'_0}}\cdot 3^{b_0}\cdot 5^{d_0}) \geq (1/4-\varepsilon)\log_4(3^{b_0}\cdot 5^{d_0}) \geq 2M(1/4-\varepsilon)\log_4{3}\geq N$, 
			the number $2^{(\lfloor{a_0/2}\rfloor+c_0+a_1+2c_1)'}\cdot 3^{b_1}\cdot 5^{d_1}$ is $\varepsilon$-equidistributed,
			i.e., $a_2$, $b_2$, $c_2$ and $d_2$ belong to  $((1/4-\varepsilon)\log_4(2^{(\lfloor{a_0/2}\rfloor+c_+a_1+2c_1)'}\cdot 3^{b_1}\cdot 5^{d_1}),(1/4+\varepsilon)\log_4(2^{(\lfloor{a_0/2}\rfloor+c_+a_1+2c_1)'}\cdot 3^{b_1}\cdot 5^{d_1}))$.
			
			This implies that $a_2$, $b_2$, $c_2$ and $d_2$ can be  bounded from above by the following expression: $(1/4+\varepsilon)\log_4(2^{(\lfloor{a_0/2}\rfloor+c_0+a_1+2c_1)'}\cdot 3^{b_1}\cdot 5^{d_1})\leq (1/4+2\varepsilon)\alpha(b_1+d_1)\leq 2\beta^2\alpha^2(b_0+d_0)$. Plugging these estimates for $a_1, \dots, d_1,a_2,\dots,d_2$ in (\ref{eq:est}), we get that
			
			\begin{align*}
			\frac{f^3(n)}{n} &\leq \frac{4^{a_0/8+c_0/4+(3/4\alpha\beta+3\alpha^2\beta^2+4\alpha^3\beta^2)(b_0+d_0)}}{4^{a_0+b_0+c_0+d_0-1}}\\
			&=4 \cdot 4^{-7a_0/8-3c_0/4+(3/4\alpha\beta+3\alpha^2\beta^2+4\alpha^3\beta^2-1)(b_0+d_0)}\\
			& \leq 4 \cdot n^{\theta},
			\end{align*}
			for some $0<\theta<1$ since $3\alpha\beta/4+3\alpha^2\beta^2+4\alpha^3\beta^2 < 1$.
			
			\item \textit{Larger bases}
			
			Finally, we prove the result for $b \geq 5$. Let $f$ denote $S_{2,b}$. We will prove that either $f(n) \leq c\cdot n^{c_b}$ or $f^2(n) \leq c \cdot n^{c_b}$ if $n$ is large enough, for some $c>0$ and $0<c_b<1$.
			
			We start applying Conjecture \ref{conj:uniformdistr} a few times: 
			for every prime $p$ dividing $b$ and for every proper divisor $d$ of $b$ (i.e., a divisor which is less than $b$, including $1$), we apply it for $q=b$, $a=d$, $F_p=\{r \text{ prime}:r \leq b+1, r \neq p\}$. 
			Taking the maximum of the $N$ obtained by each application of the conjecture, we get the following statement: 
			for every $\varepsilon > 0$, there is $N$ such that, for every proper divisor $d$ of $b$ and every prime divisor $p$ of $b$, the number $d\prod_{q_i \in F_p} q_i^{a_i}$ is $\varepsilon$-equidistributed if any of the $a_i$ is at least $N$.
			
			Let $\varepsilon > 0$ be such that $(b+1)!^{(\log_b(b+1))(1/b+\varepsilon)}<b$. Such an $\varepsilon$ exists by the first item of Lemma \ref{lem:forthmt2blarge} and by the fact that $\lim_{\varepsilon \to 0^+}(b+1)!^{\varepsilon\log_b(b+1)} = 1$. Moreover, let $n \geq b^{4M}$, with $M \geq N$, where $N$ is as in the paragraph above. For $i \in \{0,\dots,b-1\}$, let $n_i$ denote the number of digits $i$ in the base-$b$ expansion of $n$. We know that $\sum_{i=0}^{b-1}n_i \geq \log_b{n} \geq 4M$.
			
			By the definition of $f$, we have $f(n)=\prod_{i=0}^{b-1}(i+2)^{n_i}$. We may rewrite this number as $b^t\cdot d \cdot \prod_{q_i \in F_p}q_i^{\alpha_i}$, where $t \geq 0$ is an integer, $d$ is a proper divisor of $b$ and $p$ is a prime divisor of $b$. In particular, we have $f(f(n))=2^tf(d \cdot \prod_{q_i \in F_p}q_i^{\alpha_i})$. Note that we have $t \geq n_{b-2}$, since all the $n_{b-2}$ powers of $b$ are factored out to the term $b^t$.

			Suppose first that $n_{b-1} \geq M$ or $n_{b-3} \geq M$. Then, the number $d \cdot \prod_{q_i \in F_p}q_i^{\alpha_i}$ is $\varepsilon$-equidistributed, since no prime factor from $b+1$ or $b-1$ is factored out in $b^t$ or $d$ in the product $b^t\cdot d \cdot \prod_{q_i \in F_p}q_i^{\alpha_i}$, as $b$ is coprime with $b-1$ and $b+1$, and hence $\alpha_i\geq \max\{n_{b-3},n_{b-1}\}\geq M$ for some $i$. The number of occurrences of every digit from $0$ to $b-1$ in $f(n)$ belongs, then, to the interval $((1/b-\varepsilon)\log_b(d \cdot \prod_{q_i \in F_p}q_i^{\alpha_i}),(1/b+\varepsilon)\log_b(d \cdot \prod_{q_i \in F_p}q_i^{\alpha_i}))$. This implies that 
			
			\begin{align*}
			{f^2(n)} &\leq 2^t\cdot(2\cdots (b+1))^{(1/b+\varepsilon)\log_b(d \cdot \prod_{q_i \in F_p}q_i^{\alpha_i})}\\
			& = 2^t\cdot(b+1)!^{(1/b+\varepsilon)\log_b(\frac{1}{b^t}\prod_{i=0}^{b-1}(i+2)^{n_i})}\\
			& = \left(\frac{2}{(b+1)!^{(1/b+\varepsilon)}}\right)^t\cdot (b+1)!^{(1/b+\varepsilon)\sum_{i=0}^{b-1}n_i\log_b(i+2)}\\
			&\leq 1\cdot (b+1)!^{(1/b+\varepsilon)\log_b(b+1)\sum_{i=0}^{b-1}n_i}\\
			& \leq b^{c_b\sum_{i=0}^{b-1}n_i}\\
			& = c \cdot n^{c_b}
			\end{align*}			
			for some $c>0$, $0<c_b<1$, where the last inequality comes from the choice of $\varepsilon$.
			
			Suppose now, on the other hand, that $n_{b-1}<M$ and $n_{b-3}< M$. First, if $n_{b-2}<M$, as $M \leq \log_b(n)/4$ and $\sum_{i=0}^{b-1}n_i \leq \log_b{n}$, we have $\sum_{i=0}^{b-4}n_i \geq \log_b(n)/4$. Then,			
			\begin{align*}
			\frac{f(n)}{n} & =\frac{\prod_{i=0}^{b-1}(i+2)^{n_i}}{b^{\sum_{i=0}^{n-1}-1}}\\
			& \leq b \cdot \left(\frac{b-1}{b}\right)^{\sum_{i=0}^{b-3}n_i}\left(\frac{b+1}{b}\right)^{n_{b-1}}\\
			&\leq b\cdot  \left(\frac{b^2-1}{b^2}\right)^{\log_b(n)/4}\\
			&=b\cdot n^{\log_b\left(\frac{b^2-1}{b^2}\right)/4},
			\end{align*}
		as desired, since $\log_b\left(\frac{b^2-1}{b^2}\right)<0$.
		
		Finally, if $n_{b-1}, n_{b-3} < M \leq \log_b(n)/4$ and $n_{b-2} \leq M$, then, applying the trivial bound $f(m)\leq (b+1)^{1+\log_b{m}}$ and noting that $t \geq n_ {b-2}$ and $\sum_{i=0}^{n-4}n_i+n_{b-2} \geq \log_b(n)/2$, we get
		\begin{align*}
			\frac{f^2(n)}{n}&\leq\frac{2^t\cdot f(d \cdot \prod_{q_i \in F_p}q_i^{\alpha_i})}{b^{\sum_{i=0}^{b-1}n_i-1}}\\
			&\leq\frac{2^t \cdot(b+1)^{1+\log_b(d \cdot \prod_{q_i \in F_p}q_i^{\alpha_i})}}{b^{\sum_{i=0}^{b-1}n_i-1}}\\
			&=\frac{2^t \cdot(b+1)^{1+\log_b((\prod_{i=0}^{b-1}(i+2)^{n_i})/b^t)}}{b^{\sum_{i=0}^{b-1}n_i-1}}\\
			&\leq b(b+1)\cdot\left(\frac{2}{b+1}\right)^t\cdot\left(\frac{(b+1)^{\log_b(b-2)}}{b}\right)^{\sum_{i=0}^{b-4}n_i}\cdot\left(\frac{b+1}{b}\right)^{n_{b-2}}\\
			&\qquad \cdot\left(\frac{(b+1)^{\log_b(b-1)}}{b}\right)^{n_{b-3}}\cdot\left(\frac{(b+1)^{\log_b(b+1)}}{b}\right)^{n_{b-1}}\\
			&\leq b(b+1)\cdot\left(\frac{2}{b}\right)^{n_{b-2}}\cdot\left(\frac{(b+1)^{\log_b(b-2)}}{b}\right)^{\sum_{i=0}^{b-4}n_i} \cdot\left(\frac{(b+1)^{\log_b(b-1)}}{b}\right)^{n_{b-3}}\\
			&\qquad\cdot\left(\frac{(b+1)^{\log_b(b+1)}}{b}\right)^{n_{b-1}}.
		\end{align*}
		By the second item of Lemma \ref{lem:forthmt2blarge}, $\frac{(b+1)^{\log_b(b-1)}}{b} < 1$. This inequality, together with the simple fact that $(b+1)^{\log_b(b-2)} \geq 2$ for $b \geq 4$, implies 
		\begin{align*}
			\frac{f^2(n)}{n}&\leq b(b+1)\cdot\left(\frac{(b+1)^{\log_b(b-2)}}{b}\right)^{n_{b-2}+\sum_{i=0}^{b-4}n_i} \cdot\left(\frac{(b+1)^{\log_b(b+1)}}{b}\right)^{n_{b-1}}\\
			&\leq b(b+1) \left(\frac{(b+1)^{2\log_b(b-2)+\log_b(b+1)}}{b^3}\right)^{\log_b(n)/4},
		\end{align*}
		where we used that $n_{b-2}+\sum_{i=0}^{b-4} \geq \log_b(n)/2$ and $n_{b-1}\leq \log_b(n)/4$ in the last inequality. This completes the proof, since, by the third item of Lemma \ref{lem:forthmt2blarge}, the expression raised to $\log_b{n}$ in the last line above is smaller than $1$, whence $\frac{f^2(n)}{n}$ is bounded by $b(b+1)\cdot n^\gamma$ for some $\gamma<0$.
		\end{enumerate}
	\end{proof}	
	\begin{remark}\label{rem:loglog}
		The proof of Theorem \ref{thm:f23converges} gives that $S_{2,b}(n) \leq n^\gamma$ for some $\gamma<1$ if $n$ is large enough. As in Theorem \ref{thm:perst1}, this implies that the persistence of every number $n$ under $S_{2,b}$ is at most $c\log\log{n}$ for some constant $c$. It is not hard to see that, as before, there is some sequence of integers for which this is sharp up to a constant factor (i.e., there exists an increasing sequence $(n_k)_{k\geq 1}$ of integers such that the persistence of $n_k$ is at least $c'\log\log{n_k}$ for some $c'>0$ and every $k$).
	\end{remark}
	
	\subsection{The $(4,5)$ problem diverges}
	
	Using a proof similar to the proof of Theorem \ref{thm:f23converges}, one can show that the sequence of iterates of both $S_{3,4}$ and $S_{3,5}$ starting from every integer stabilizes (indeed, one can again prove that, in case $f=S_{3,4}$ or $f=S_{3,5}$, for every sufficiently large $n$, $f^j(n) \leq n$ for some $j \in \{1,2,3,4\}$). On the other hand, our next result shows that, assuming Conjecture \ref{conj:uniformdistr}, $S_{4,5}$ is the smallest instance where the opposite behavior occurs, namely the sequence of iterates starting from every sufficiently large integer diverges.	
	\begin{theorem}\label{thm:45diverges}
		Conjecture \ref{conj:uniformdistr} implies the following: there is an integer $n_0$ such that, for every $n \geq n_0$, the sequence of iterates $(S_{4,5}^k(n))_{k \geq 0}$ diverges to infinity.
	\end{theorem}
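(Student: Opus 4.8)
The plan is to exhibit a forward-invariant family $\mathcal{G}$ of integers on which $S_{4,5}$ strictly expands (in the sense of $\log$), and then to show that every sufficiently large integer lands in $\mathcal{G}$ after a bounded number of iterations. Write $S=S_{4,5}$. Since the digits $0,1,2,3,4$ shift to $4,5,6,7,8$, one has $S(m)=4^{m_0}5^{m_1}6^{m_2}7^{m_3}8^{m_4}=2^{2m_0+m_2+3m_4}\,3^{m_2}\,5^{m_1}\,7^{m_3}$, where $m_i=\#i(m)_5$; moreover, if $5\nmid m'$ then $S(5^{c}m')=4^{c}\,S(m')$, because multiplying by $5$ appends a zero digit in base $5$. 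The key numerical fact is $4\cdot5\cdot6\cdot7\cdot8=6720>3125=5^5$, so $\rho:=\tfrac15\log_5 6720>1$; also set $\sigma:=\log_5 4\in(0,1)$, so that $S(m)\ge 4^{\#(m)_5}\ge m^{\sigma}$ unconditionally, hence $S^{j}(m)\ge m^{\sigma^{j}}\to\infty$ as $m\to\infty$ for each fixed $j$.

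Fix $\varepsilon>0$ small enough that the finitely many strict inequalities needed below hold (it will suffice that $\tfrac{1/5+\varepsilon}{(1/5-\varepsilon)\log_5 6720}<\tfrac15$ and $\tfrac45(\tfrac15-\varepsilon)\log_5 6720+\tfrac15\log_5 4>1+5\varepsilon$), and let $N$ be the constant furnished by Conjecture \ref{conj:uniformdistr} with $q=5$, $F=\{2,3,7\}$, $a=1$; thus $2^{a}3^{b}7^{d}$ is $\varepsilon$-equidistributed in base $5$ whenever $\max(a,b,d)\ge N$. Pick a constant $C_0$ (depending on $N,\varepsilon$) large enough and set
\[
\mathcal{G}=\bigl\{\,m=2^{a}3^{b}5^{c}7^{d}\ :\ a\ge N,\ c\le\tfrac15\log_5 m,\ m\ge C_0\,\bigr\}.
\]
The core step is a lemma: if $m\in\mathcal{G}$ then $S(m)\in\mathcal{G}$ and $\log_5 S(m)\ge\lambda\log_5 m$ for a fixed $\lambda>1$. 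To prove it, write $S(m)=4^{c}\,S(P)$ with $P=2^{a}3^{b}7^{d}$; since $a\ge N$, $P$ is $\varepsilon$-equidistributed and $K:=\#(P)_5\in[\log_5 m-c,\ \log_5 m-c+1]$, so $K\ge\tfrac45\log_5 m$ because $c\le\tfrac15\log_5 m$. Reading off $S(m)=2^{a'}3^{b'}5^{c'}7^{d'}$: one gets $a'=2c+2\#0(P)_5+\#2(P)_5+3\#4(P)_5\ge 2(\tfrac15-\varepsilon)K\ge N$ once $C_0$ is large; $c'=\#1(P)_5\le(\tfrac15+\varepsilon)K$; and, since every digit $\delta$ of $P$ contributes a positive $\log_5(\delta+4)$, one has $\log_5 S(P)\ge(\tfrac15-\varepsilon)K\log_5 6720$ and $\log_5 S(m)\ge 2c\log_5 2+\log_5 S(P)$. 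Combining these with $c\le\tfrac15\log_5 m$ and $K\ge\tfrac45\log_5 m$ gives $\log_5 S(m)\ge\lambda\log_5 m$ with $\lambda=\tfrac45(\tfrac15-\varepsilon)\log_5 6720+\tfrac15\log_5 4>1$; then $c'\le(\tfrac15+\varepsilon)\log_5 m\le\tfrac15\log_5 S(m)$ by the choice of $\varepsilon$, and $S(m)\ge m^{\lambda}\ge C_0$. Iterating, $m\in\mathcal{G}$ forces $S^{j}(m)\in\mathcal{G}$ and $\log_5 S^{j}(m)\ge\lambda^{j}\log_5 m\to\infty$, so the whole orbit of $m$ diverges.

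It remains to show every sufficiently large $n$ reaches $\mathcal{G}$ within two steps; I would split into cases according to $\Sigma:=n_0+n_2+n_3+n_4$. If $\Sigma\ge c_1 N$ for a suitable absolute constant $c_1$ (chosen so that $2(\tfrac15-\varepsilon)c_1\log_5 2\ge1$ and $c_1\ge3$), then $S(n)=5^{n_1}P$ with $P=2^{2n_0+n_2+3n_4}3^{n_2}7^{n_3}$; the sum of the three exponents of $P$ is $\ge\Sigma\ge c_1 N$, so their maximum is $\ge N$, $P$ is $\varepsilon$-equidistributed, $S^{2}(n)=4^{n_1}S(P)$, and the computation of the core lemma shows $S^{2}(n)=2^{a'}3^{b'}5^{c'}7^{d'}$ with $a'\ge N$ and $c'\le\tfrac15\log_5 S^{2}(n)$; with $S^{2}(n)\ge n^{\sigma^{2}}\ge C_0$ for $n$ large, this gives $S^{2}(n)\in\mathcal{G}$. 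If instead $\Sigma<c_1 N$, then $n_1\to\infty$ as $n\to\infty$, and $S(n)=5^{n_1}\cdot Q$ with $Q=2^{e_2}3^{e_3}7^{e_7}$ a bounded integer, so $S^{2}(n)=4^{n_1}S(Q)=2^{2n_1+q_2}3^{q_3}5^{q_5}7^{q_7}$ with all $q_i$ bounded; then the exponent of $2$ is $\ge 2n_1\ge N$, the exponent of $5$ is the bounded quantity $q_5\le\tfrac15\log_5 S^{2}(n)$ (as $\log_5 S^{2}(n)\ge 2n_1\log_5 2\to\infty$), and $S^{2}(n)\ge4^{n_1}\ge C_0$, so again $S^{2}(n)\in\mathcal{G}$. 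Hence there is $n_0$ with $n\ge n_0\Rightarrow S^{2}(n)\in\mathcal{G}\Rightarrow S^{k}(n)\to\infty$.

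I expect the main obstacles to be (a) the bookkeeping in the core lemma — tracking precisely how $\varepsilon$-equidistribution of $P=2^{a}3^{b}7^{d}$ controls the four prime exponents of $S(m)$, and verifying that the inequality $c\le\tfrac15\log_5 m$ is genuinely preserved, which is exactly where the numerical slack $6720>5^5$ is spent — and (b) making the case analysis for reaching $\mathcal{G}$ exhaustive, i.e. handling correctly the degenerate digit distributions (essentially $n$ consisting almost entirely of $1$'s, which makes the first iterate nearly a pure power of $5$ and the second nearly a pure power of $4$). Pinning down the admissible range of $\varepsilon$ and the constants $c_1,C_0,\lambda$ is routine once the inequalities above are written out.
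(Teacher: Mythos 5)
Your proposal is correct and reaches the theorem by a genuinely different route from the paper. The paper works directly with the digit-count vectors of the iterates and, by propagating the equidistribution bounds through five consecutive applications of $S_{4,5}$ (with a case split on whether the count of the digit $1$ dominates), proves that $S_{4,5}^5(n)>n$ for all sufficiently large $n$, from which divergence follows. You instead isolate an explicitly forward-invariant set $\mathcal{G}$ of integers $2^a3^b5^c7^d$ with large $2$-exponent and $5$-exponent at most one fifth of the total length, prove a one-step expansion estimate $\log_5 S(m)\ge\lambda\log_5 m$ with $\lambda>1$ on $\mathcal{G}$, and show that every large $n$ enters $\mathcal{G}$ after two steps. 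Both arguments spend the same numerical slack, namely $4\cdot5\cdot6\cdot7\cdot8=6720>5^5$, and both invoke Conjecture \ref{conj:uniformdistr} with $q=5$, $F=\{2,3,7\}$, $a=1$; your version is structurally cleaner, needs only two burn-in iterates instead of five, and yields the stronger quantitative conclusion $\log_5 S^{j}(m)\ge\lambda^{j}\log_5 m$ on $\mathcal{G}$ (doubly exponential growth along the orbit). Your treatment of the degenerate case (near-repunits, for which the first iterate is nearly a pure power of $5$) via the identity $S(5^{c}m')=4^{c}S(m')$ plays the same role as the paper's second case $b_0>M^2$.

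One step must be written out more carefully. As stated, combining $\log_5 S(m)\ge 2c\log_5 2+(\tfrac15-\varepsilon)K\log_5 6720$ with $K\ge\tfrac45\log_5 m$ does not by itself yield $\lambda=\tfrac15\log_5 4+\tfrac45(\tfrac15-\varepsilon)\log_5 6720$: the term $2c\log_5 2$ cannot be bounded below by $\tfrac15\log_5 4\cdot\log_5 m$ because $c$ may be $0$, and the remaining term alone gives an exponent below $1$. The correct deduction uses $K\ge\log_5 m-c$: the resulting lower bound $c\log_5 4+(\tfrac15-\varepsilon)(\log_5 m-c)\log_5 6720$ is linear and decreasing in $c$ (since $(\tfrac15-\varepsilon)\log_5 6720>\log_5 4$), hence over the admissible range $0\le c\le\tfrac15\log_5 m$ it is minimized at the right endpoint, where it equals exactly $\lambda\log_5 m$. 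With that one line inserted, the core lemma, and with it the whole argument, is sound.
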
	
	\begin{proof}
		Put $f=S_{4,5}$. We will prove the following statement which obviously implies the theorem: there is $n_0$ such that, for every $n \geq n_0$, $f^5(n) > n$.
		
		We apply Conjecture \ref{conj:uniformdistr} for $q=5$, $a=1$ and $F=\{2,3,7\}$ to get the following statement: for every $\varepsilon > 0$, there is $N$ such that $2^x\cdot 3^y \cdot 7^z$ is $\varepsilon$-equidistributed whenever one of $x$, $y$, $z$ is at least $N$. In particular, the number $4^{x}\cdot 6^{y} \cdot 7^{z}\cdot 8^{w}$ is equidistributed whenever one of $x$, $y$, $z$ and $w$ is at least $N$.
		
		Take $\varepsilon = 0.001$, put $\delta=1/5-\varepsilon$ and let $n \geq 5^{4M+M^2}$, with $M\delta^3(\log_5{4})^3 \geq N$, where $N$ is the integer given by the application of Conjecture \ref{conj:uniformdistr} as in the paragraph above with this value of $\varepsilon$; and $M$ is large enough as for the last inequality in (\ref{eq:bigm}) to hold.
		
		Let $a_0, \dots, e_0$ denote, respectively, $\#0(n), \dots, \#4(n)$; and, for $k \geq 1$, let $a_k, \dots, e_k$ denote, respectively, $\#0(4^{b_{k-2}+a_{k-1}}\cdot 6^{c_{k-1}} \cdot 7^{d_{k-1}}\cdot 8^{e_{k-1}}), \dots, \#4(4^{b_{k-2}+a_{k-1}}\cdot 6^{c_{k-1}} \cdot 7^{d_{k-1}}\cdot 8^{e_{k-1}})$, where we put $b_{-1}=0$. With this notation, we have		
		\begin{equation*}
		f^k(n)=4^{b_{k-2}+a_{k-1}}\cdot 5^{b_{k-1}} \cdot 6^{c_{k-1}} \cdot 7^{d_{k-1}} \cdot 8^{e_{k-1}}
		\end{equation*}
		for every $k \geq 1$.
		
		Assume first that one of $a_0$, $c_0$, $d_0$, $e_0$ is at least $M$. As $M \geq N/\delta^3(\log_5{4})^3 > N$, this implies that $4^{a_{0}}\cdot 6^{c_{0}} \cdot 7^{d_{0}}\cdot 8^{e_{0}}$ is $\varepsilon$-equidistributed. In particular, we have		
		\begin{align*}
		a_1,\dots,e_1 &\geq  
		\delta\log_5(4^{a_{0}}\cdot 6^{c_{0}} \cdot 7^{d_{0}}\cdot 8^{e_{0}})\\
		&=\delta(a_0\log_5{4}+c_0\log_5{6}+d_0\log_5{7}+e_0\log_5{8}).
		\end{align*} 
		
		In turn, as $a_1,\dots,e_1 \geq \delta (a_0+c_0+d_0+e_0)\log_5{4} > N$, this implies that $4^{b_{0}+a_{1}}\cdot 6^{c_1} \cdot 7^{d_1}\cdot 8^{e_{1}}$ is $\varepsilon$-equidistributed, so
		\begin{align*}
		a_2,\dots,e_2 &\geq 
		\delta\log_5(4^{b_0+a_{1}}\cdot 6^{c_{1}} \cdot 7^{d_{1}}\cdot 8^{e_{1}})\\
		&=\delta((b_0+a_1)\log_5{4}+c_1\log_5{6}+d_1\log_5{7}+e_1\log_5{8})\\
		&=\delta\log_5{4}\cdot b_0+\delta^2\log_5{1344}(a_0\log_5{4}+c_0\log_5{6}+d_0\log_5{7}+e_0\log_5{8}).
		\end{align*} 
		
		As the choice of $M$ guarantees that the $a_2,\dots,e_2$ and $a_3,\dots,b_3$ are greater than $N$, the same reasoning can be applied two more times to get that		
		\begin{align*}
		a_3,\dots,e_3 &\geq
		\delta\log_5(4^{b_1+a_{2}}\cdot 6^{c_{2}} \cdot 7^{d_{2}}\cdot 8^{e_{2}})\\
		&=\delta((b_1+a_2)\log_5{4}+c_2\log_5{6}+d_2\log_5{7}+e_2\log_5{8})\\
		&\geq 
		\delta^2\log_5{1344}\log_5{4}\cdot b_0\\
		&\quad+\delta^2(\log_5{4}+\delta(\log_5{1344})^2)(a_0\log_5{4}+c_0\log_5{6}+d_0\log_5{7}+e_0\log_5{8})
		\end{align*} 
		and
		\begin{align*}
		a_4,\dots,e_4 &\geq
		\delta\log_5(4^{b_2+a_{3}}\cdot 6^{c_{3}} \cdot 7^{d_{3}}\cdot 8^{e_{3}})\\
		&=\delta((b_2+a_3)\log_5{4}+c_3\log_5{6}+d_3\log_5{7}+e_3\log_5{8})\\
		&\geq \delta^2((\log_5{4})^2+\delta\log_5{4}(\log_5{1344})^2)b_0\\
		&\quad + \delta^3\log_5{1344}(2\log_5{4}+\delta(\log_5{1344})^2)\cdot \\
		&\quad\cdot(a_0\log_5{4}+c_0\log_5{6}+d_0\log_5{7}+e_0\log_5{8}).
		\end{align*}
		
		Finally, this implies that 
		
		\begin{align*}
		\frac{f^5(n)}{n}& > \frac{4^{b_{3}+a_{4}}\cdot 5^{b_{4}} \cdot 6^{c_{4}} \cdot 7^{d_{4}} \cdot 8^{e_{4}}}{5^{a_0+b_0+c_0+d_0+e_0}} \\
		& \geq \left(\frac{4^{\delta^2\log_5{4}(\log_5{4}+\delta(\log_5{1344})^2)}\cdot 6720^{\delta^3\log_5{4}\log_5{1344}(2\log_5{4}+\delta(\log_5{1344})^2)}}{5}\right)^{a_0+c_0+d_0+e_0}\cdot\\
		&\quad\cdot\left(\frac{4^{\delta^2\log_5{1344}\log_5{4}}\cdot 6720^{\delta^2\log_5{4}(\log_5{4}+\delta(\log_5{1344})^2)}}{5}\right)^{b_0}\\
		& > 1,
		\end{align*}
		as a straightforward computation shows that each of the expressions inside the parenthesis are greater than $1$ (indeed, the first and the second expression are greater than $1.14$ and $1.06$, respectively).	
		
		Suppose now, on the other hand, that each of $a_0$, $c_0$, $d_0$, $e_0$ is less than $M$. This implies that $b_0 > M^2 > N$, which in turn implies that $4^{b_0+a_1}\cdot 6^{c_1}\cdot 7^{d_1}\cdot 8^{e_1}$ is $\varepsilon$-equidistributed. Hence, we have $a_2,\cdots,e_2 \geq \delta\log_5(4^{b_0+a_1}\cdot 6^{c_1}\cdot 7^{d_2}\cdot 8^{e_1}) \geq (\delta\log_5{4})b_0>N$. Again, this implies that $4^{b_1+a_2}\cdot 6^{c_2}\cdot 7^{d_2}\cdot 8^{e_2}$ is $\varepsilon$-equidistributed and $a_3,\dots,e_3 \geq \delta\log_5(4^{b_1+a_2}\cdot 6^{c_2}\cdot 7^{d_2}\cdot 8^{e_2}) \geq \delta^2\log_5{4}\log_5{1344}\cdot b_0 > N$. Finally, this implies that $a_4,\dots,e_4 \geq \delta\log_5(4^{b_2+a_3}\cdot 6^{c_3}\cdot 7^{d_3}\cdot 8^{e_3}) \geq \delta^2\log_5{4}(\log_5{4}+\delta(\log_5{1344})^2)b_0$, and then		
		\begin{align*}
		\frac{f^5(n)}{n}& > \frac{4^{b_3+a_4}\cdot 5^{b_4}\cdot 6^{c_4}\cdot 7^{d_4} \cdot 8^{e_ 4}}{5^{a_0+b_0+c_0+d_0+e_0}}\\
		&> \left(\frac{4^{\delta^2\log_5{4}\log_5{1344}}\cdot 6720^{\delta^2\log_5{4}(\log_5{4}+\delta(\log_5{1344})^2)}}{5}\right)^{b_0}\cdot \left(\frac{1}{5}\right)^{4M}\\
		&\geq \left(\frac{4^{\delta^2\log_5{4}\log_5{1344}}\cdot 6720^{\delta^2\log_5{4}(\log_5{4}+\delta(\log_5{1344})^2)}}{5}\right)^{M^2}\cdot \left(\frac{1}{5}\right)^{4M}\\
		& > 1,\stepcounter{equation}\tag{\theequation}\label{eq:bigm}
		\end{align*}
		for large $M$, as the expression being raised to $M^2$ is greater than $1$ (approximately $1.06$).		
	\end{proof}

	\subsection{Larger $t$ and $b$}
	
	In this section, we consider the behavior of the $t$-shifted problem in base $b$ when $t$ is bounded by a function of $b$. As one would expect, the equidistribution conjectures imply that, if $b$ is very large compared to $t$, then the sequence of iterates starting from any integer stabilizes in the $t$-shifted problem in base $b$. One the other hand, if $t$ is very close to $b$, then almost no sequence stabilizes. Our next results give some estimates on the ranges of $t$ and $b$ (both for small and large $b$) where each of those behaviors appear. Again, we start with a technical lemma.	
	\begin{lemma}\label{lem:forthmtb/4}
		Let $t$ and $b$ be positive integers such that $b\geq 5$ and $t \leq b/4$. Then
		\leavevmode
		\begin{enumerate}[(i)]
			\item $b^{\log_b(t)-1}\cdot\frac{(b+t-1)^{\log_b(b+t-1)}}{b}<1$;

			\item $\left(\frac{(b+t-1)!}{(t-1)!}\right)^{\frac{1}{b}\log_b(b+t-1)} < b.$
		\end{enumerate}
	\end{lemma}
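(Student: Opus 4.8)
The plan is to reduce each inequality, after taking base-$b$ logarithms, to an elementary numerical comparison, exploiting the hypothesis $t\le b/4$ in two ways: it yields $\log_b t\le 1-\log_b 4$ and also $b+t-1<\tfrac{5b}{4}$, and the slack these two bounds provide is exactly what is needed.

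For part (i), first note that $b^{\log_b t-1}=t/b$, so the claim is equivalent to $t\,(b+t-1)^{\log_b(b+t-1)}<b^2$. Writing $(b+t-1)^{\log_b(b+t-1)}=b^{(\log_b(b+t-1))^2}$ and using $t\le b/4$ together with $b+t-1<\tfrac{5b}{4}$ (so that $\log_b(b+t-1)<1+\log_b\tfrac54$, the map $x\mapsto x^{\log_b x}$ being increasing for $x\ge 1$), I would take $\log_b$ of both sides and collect terms; after a short simplification the required inequality becomes $(\log_b\tfrac54)^2<\log_b\tfrac{64}{25}$. Since $(\log_b\tfrac54)^2=(\ln\tfrac54)^2/(\ln b)^2$ while $\log_b\tfrac{64}{25}=(\ln\tfrac{64}{25})/\ln b$, this amounts to $\ln b>(\ln\tfrac54)^2/\ln\tfrac{64}{25}\approx 0.053$, which holds for all $b\ge 2$.

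For part (ii), the key observation is that $\frac{(b+t-1)!}{(t-1)!}=\prod_{j=t}^{b+t-1}j$ is a product of exactly $b$ consecutive integers, hence at most the $b$-th power of their arithmetic mean, namely $\bigl(t+\tfrac{b-1}{2}\bigr)^b$; since $t\le b/4$ this is at most $\bigl(\tfrac{3b}{4}\bigr)^b$. Therefore the left-hand side of (ii) is at most $\bigl(\tfrac{3b}{4}\bigr)^{\log_b(b+t-1)}$, and taking $\log_b$ reduces the claim to $\log_b(b+t-1)\cdot\log_b\bigl(\tfrac{3b}{4}\bigr)<1$. Bounding $\log_b(b+t-1)<1+\log_b\tfrac54$ and writing $\log_b\bigl(\tfrac{3b}{4}\bigr)=1-\log_b\tfrac43$ (which is positive and $<1$ for $b\ge 2$), this follows from $(1+u)(1-v)=1+(u-v)-uv<1$ whenever $0<u<v$, applied with $u=\log_b\tfrac54<\log_b\tfrac43=v$ (legitimate because $\tfrac54<\tfrac43$).

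The step I expect to be the main obstacle is choosing product bounds that are tight enough yet still elementary. In (ii) the naive estimate $\prod_{j=t}^{b+t-1}j\le(b+t-1)^b$ is worthless --- it would require proving $(b+t-1)^{\log_b(b+t-1)}<b$, i.e.\ $b+t-1<b$ --- so the AM--GM refinement together with the factor $1/4$ coming from $t\le b/4$ is essential; likewise in (i), the weaker hypothesis $t\le b$ already fails, so one cannot discard the $\log_b 4$ term. Everything else is a routine base-$b$ logarithm computation; one should still check the degenerate case $t=1$ separately (there $\log_b t=0$ and the two products are $b^{\log_b b}=b$ and $b!$), but it is covered by the same estimates.
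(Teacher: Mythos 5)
Your proof is correct and follows essentially the same route as the paper: part (i) is the identical reduction to $\log_b t+(\log_b(b+t-1))^2<2$ followed by the same numerical estimate, and part (ii) hinges on the same AM--GM bound $\frac{(b+t-1)!}{(t-1)!}\le\left(\frac{b+2t-1}{2}\right)^b<\left(\frac{3b}{4}\right)^b$ together with $b+t-1<\frac{5b}{4}$. The only cosmetic difference is the last step of (ii): the paper proves $\log(5b/4)\log(3b/4)<(\log b)^2$ via concavity of $\log\log$, while you prove the same inequality by expanding $(1+\log_b\frac54)(1-\log_b\frac43)<1$ using $\frac54<\frac43$; both are valid.
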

	
	\begin{proof}
		\begin{enumerate}[(i)]
			\item The inequality is equivalent, applying logarithms, to 			
			\begin{equation}\label{eq:firstequiv}
			\log{b}\cdot\log{t}+(\log(b+t-1))^2<2(\log{b})^2.
			\end{equation}
			The left-hand side of (\ref{eq:firstequiv}) is increasing with $t$, so bounded from above by $\log{b}\cdot\log(b/4)+(\log(5b/4))^2$. Expanding this expression, we get			
			\begin{align*}
			2(\log{b})^2+\log(25/64)\log{b}+(\log(5/4))^2,
			\end{align*}
			which is smaller than $2(\log{b})^2$ whenever $b	\geq e^{-\log(5/4)^2/\log(25/64)}\approx 1.05$.
				
			\item The inequality is equivalent, taking logarithms twice, to			
			\begin{equation}\label{eq:bound2equiv}
			\log\log(b+t-1)+\log\log\left(\frac{(b+t-1)!}{(t-1)!}\right)<\log{b}+2\log\log{b}.
			\end{equation}			
			By the inequality of arithmetic and geometric means, we have that			
			\begin{align*}
			\frac{(b+t-1)!}{(t-1)!}&=t(t+1)\cdots(b+t-1)\\&\leq \left(\frac{t+(t+1)+\cdots+(b+t-1)}{b}\right)^b\\
			&=\left(\frac{b+2t-1}{2}\right)^b\\
			&<\left(\frac{3b}{4}\right)^b,
			\end{align*} 
			as $t \leq b/4$.
			
			We also have $b+t-1< 5b/4$. Plugging these two estimates on the left-hand side of (\ref{eq:bound2equiv}) and using that $\log\log{x}$ is a concave and increasing function on its domain, we get that		
			\begin{align*}
			\log\log(b+t-1)+\log\log\left(\frac{(b+t-1)!}{(t-1)!}\right) &< \log\log(5b/4)+\log\log(3b/4)^b\\
			&=\log{b}+\log\log(5b/4)+\log\log(3b/4)\\
			&<\log{b}+2\log\log\left(\frac{5b/4+3b/4}{2}\right)\\
			& = \log{b}+2\log\log{b}.
			\end{align*}
		\end{enumerate}
	\end{proof}	
	\begin{theorem}\label{thm:b/4converges}
		Conjecture \ref{conj:uniformdistr} implies the following: for every prime number $b \geq 5$ and $t \leq b/4$, the sequence of iterates $(S_{t,b}^k(n))_{k \geq 1}$ stabilizes for every positive integer $n$.
	\end{theorem}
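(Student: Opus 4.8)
The plan is to adapt, in a streamlined form, the argument used for $b\ge 5$ in the proof of Theorem~\ref{thm:f23converges}, the simplification coming entirely from the hypothesis that $b$ is prime. It suffices to produce constants $C>0$ and $0<c_b<1$ and a threshold $N_0$ such that
\[
S_{t,b}^{2}(n)\le C\,n^{c_b}\quad\text{for all }n\ge N_0.
\]
Indeed, once this holds, $S_{t,b}^{2}(n)<n$ for all large $n$, so the orbit $(S_{t,b}^{2k}(n))_k$ strictly decreases until it drops below a fixed bound, and thereafter stays in a finite set; hence it is eventually periodic, and then so is $(S_{t,b}^{k}(n))_k$, which is exactly the assertion that every orbit stabilizes. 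We may assume $t\ge 2$, since $t=0$ is the classical Sloane map and $t=1$ is Theorem~\ref{thm:f1bconverges}.

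Write $f=S_{t,b}$ and $n=\sum_i n_i b^i$ with $n_i=\#i(n)$. Because $b$ is prime and $2\le t\le b/4$, the $b$ factors $i+t$ for $i=0,\dots,b-1$ form the block $t,t+1,\dots,b+t-1$ of $b$ consecutive integers lying in $(0,2b)$; hence exactly one of them is a multiple of $b$, namely $i+t=b$, attached to the digit $b-t$, and it equals $b$ exactly, while every other factor is coprime to $b$. Therefore
\[
f(n)=b^{\,n_{b-t}}\cdot P,\qquad P:=\prod_{q\in F}q^{\alpha_q},\qquad F:=\{\,q\ \text{prime}:q\le b+t-1\,\}\setminus\{b\},
\]
where $b^{\,n_{b-t}}$ is the exact power of $b$ dividing $f(n)$ and $\alpha_q\ge n_i$ whenever $q\mid i+t$ with $i\ne b-t$; moreover $f^{2}(n)=t^{\,n_{b-t}}f(P)$, since the base-$b$ expansion of $b^{\,n_{b-t}}P$ is that of $P$ followed by $n_{b-t}$ zeros, each contributing a factor $t$ to $f$. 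Now apply Conjecture~\ref{conj:uniformdistr} with $q=b$, $a=1$ (the only proper divisor of the prime $b$) and the set $F$ above, which omits $b$, the unique prime dividing $b$: for every $\varepsilon>0$ there is $N$ such that $P$ is $\varepsilon$-equidistributed in base $b$ whenever some $\alpha_q\ge N$. Choose $\varepsilon>0$ small enough that $\bigl((b+t-1)!/(t-1)!\bigr)^{(1/b+\varepsilon)\log_b(b+t-1)}<b$, which is possible by item~(ii) of Lemma~\ref{lem:forthmtb/4} and continuity, and set $M=N$.

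Then dichotomize. If $n_i\ge M$ for some $i\ne b-t$, pick a prime $q\mid i+t$; then $q\in F$ and $\alpha_q\ge n_i\ge N$, so $P$ is $\varepsilon$-equidistributed and each digit occurs fewer than $(1/b+\varepsilon)(1+\log_b P)$ times in it, whence $f(P)\le\bigl((b+t-1)!/(t-1)!\bigr)^{(1/b+\varepsilon)(1+\log_b P)}$. Substituting $\log_b P=\sum_i n_i\log_b(i+t)-n_{b-t}$ and using $(b+t-1)!/(t-1)!=t(t+1)\cdots(b+t-1)\ge t^{b}$ to absorb the factor $t^{\,n_{b-t}}$ (since then $t^{\,n_{b-t}}\le\bigl((b+t-1)!/(t-1)!\bigr)^{(1/b+\varepsilon)n_{b-t}}$), together with $\sum_i n_i\le 1+\log_b n$, one is left with $f^{2}(n)\le C\,n^{c_b}$, where $c_b=(1/b+\varepsilon)\log_b(b+t-1)\log_b\!\bigl((b+t-1)!/(t-1)!\bigr)<1$ by the choice of $\varepsilon$. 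If instead $n_i<M$ for every $i\ne b-t$, then $\log_b P=\sum_{i\ne b-t}n_i\log_b(i+t)<(b-1)M\log_b(b+t-1)$ is bounded by a constant depending only on $b$ and $t$, so $f(P)$ is too (e.g., by the trivial bound $f(m)\le(b+t-1)^{1+\log_b m}$), and hence $f^{2}(n)=t^{\,n_{b-t}}f(P)\le C'\,t^{\,n_{b-t}}\le C''\,n^{\log_b t}$ with $\log_b t<1$ because $t\le b/4<b$. Taking $c_b$ to be the larger of the two exponents and $C$ the larger of the two constants gives the displayed inequality, and with it the theorem.

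The only point I expect to require real care is the choice of $\varepsilon$, i.e., item~(ii) of Lemma~\ref{lem:forthmtb/4}; this is precisely where the hypothesis $t\le b/4$ is spent, through the bound $(b+t-1)!/(t-1)!\le(3b/4)^{b}$ and the concavity of $\log\log$. Everything else is routine bookkeeping; a less economical organization in the style of Theorem~\ref{thm:f23converges}, tracking the largest digits $b-1$ and $b-t-1$ separately, would also bring in item~(i) of Lemma~\ref{lem:forthmtb/4} to control an intermediate subcase, but the dichotomy above sidesteps it. What makes the argument simple is the primality of $b$: it forces $a=1$ to be the only shift to track in the conjecture and, crucially, guarantees that each factor $i+t$ with $i\ne b-t$ is coprime to $b$, so that no prime other than $b$ is ever hidden inside the power of $b$ split off from $f(n)$ — the feature that, for composite $b$, forces the more elaborate digit-by-digit analysis of Theorem~\ref{thm:f23converges}.
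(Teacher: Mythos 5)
Your proof is correct and follows the same overall strategy as the paper's: the same factorization $f(n)=b^{\,n_{b-t}}P$ with $b\nmid P$ (using the primality of $b$), the same application of Conjecture \ref{conj:uniformdistr} with $q=b$, $a=1$ and $F$ the primes up to $b+t-1$ other than $b$, the same dichotomy on whether some digit $i\neq b-t$ occurs at least $M$ times, and the same key inequality, item (ii) of Lemma \ref{lem:forthmtb/4}, to force the exponent $c_b<1$. You do, however, make two genuine simplifications in the bookkeeping. First, in the equidistributed case you absorb $t^{\,n_{b-t}}$ into the factorial quotient via $t^b\le (b+t-1)!/(t-1)!$, so that all of $f^2(n)$ is bounded by a single power of $(b+t-1)!/(t-1)!$ whose exponent telescopes to $(1/b+\varepsilon)\bigl(1+\sum_i n_i\log_b(i+t)\bigr)$; the paper instead carries the factor $\bigl(b^{\log_b t-1}\bigr)^{n_{b-t}}$ separately and argues that each of the two bases is less than $1$. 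Second, and more substantively, in the complementary case you observe that $P$, and hence $f(P)$, is bounded by a constant depending only on $b$, $t$, $M$, so that $f^2(n)\le C''\,n^{\log_b t}$ follows from $n_{b-t}\le 1+\log_b n$ and $\log_b t<1$ alone; this sidesteps item (i) of Lemma \ref{lem:forthmtb/4}, which the paper needs because its estimate only exploits $\sum_{i\neq b-t}n_i\le n_{b-t}$ rather than the absolute bound $\sum_{i\neq b-t}n_i<(b-1)M$, and therefore must offset a factor exceeding $1$ against the decaying power of $b$. Your version spends the hypothesis $t\le b/4$ only through item (ii) and renders item (i) superfluous for this theorem; the remaining steps (the exact power of $b$ split off, the identity $f^2(n)=t^{\,n_{b-t}}f(P)$, and the descent-to-a-finite-set conclusion) match the paper's.
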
	
	\begin{proof}
		The proof follows the ideas of the proof of Theorem \ref{thm:f23converges}. Let $t$ and $b$ be integers such that $3 \leq t \leq b/4$ (the cases $t=1$ and $t=2$ were covered by Theorems \ref{thm:f1bconverges} and \ref{thm:f23converges}, respectively). Again, we will prove that, for every sufficiently large integer $n$, either $f(n) \leq n^{c_b}$ or $f^2(n) \leq n^{c_b}$, where $f=S_{t,b}$ and $0<c_b<1$.
		
		We apply Conjecture \ref{conj:uniformdistr} with $q=b$, $a=1$, $F=\{p \text{ prime}:p \leq b+t-1, p \neq b\}$ to get the following statement: 
		for every $\varepsilon > 0$, there is $N$ such that the number $\prod_{p_i \text{ prime}:p_i \leq b+t-1, p_i \neq b}p_i^{a_i}$ is $\varepsilon$-equidistributed if any of the $a_i$ is at least $N$. 
		Taking the maximum of the $N$ obtained by each application of the conjecture, we get the following statement: 
		for every $\varepsilon > 0$, there is $N$ such that, for every proper divisor $d$ of $b$ and every prime divisor $p$ of $b$, the number $d\prod_{q_i \text{ prime}, q_i \leq b+t-1, q_i \neq p}q_i^{a_i}$ is $\varepsilon$-equidistributed if any of the $a_i$ is at least $N$.
		
		Let $\varepsilon> 0$ be so small as to satisfy 
		\begin{equation*}
			\frac{1}{b}\cdot\left(\frac{(b+t-1)!}{(t-1)!}\right)^{(\frac{1}{b}+\varepsilon)\log_b(b+t-1)}<1,
		\end{equation*}
		 which is possible by the second item of Lemma \ref{lem:forthmtb/4}. Moreover, let $n \geq b^{2(b-1)M}$, with $M \geq N$, where $N$ is as in the paragraph above. For $i \in \{0,\dots,b-1\}$, let $n_i$ denote the number of digits $i$ in the base-$b$ expansion of $n$. We know that $\sum_{i=0}^{b-1}n_i \geq \log_b{n} \geq 2(b-1)M$. This implies that either $n_i \geq M$ for some $i\neq b-t$ or $n_{b-t} \geq (b-1)M$.
		
		By the definition of $f$, we have $f(n)=\prod_{i=0}^{b-1}(i+t)^{n_i}$, where $n_i$ is the number of digits $i$ of $n$ in base $b$. Note that $\prod_{i=0}^{b-1}(i+t)^{n_i}=b^{n_{b-t}}\cdot\prod_{i=0,i\neq b-t}^{b-1}(i+t)^{n_i}$, and that $b$ does not divide the second product. Then, we have $f^2(n)=t^{n_{b-t}}\cdot f(\prod_{i=0,i\neq b-t}^{b-1}(i+t)^{n_i})$
		
		Suppose first that $n_i \geq M$ for some $i\neq b-t$. This implies that the number $\prod_{i=0,i\neq b-t}^{b-1}(i+t)^{n_i}$ is $\varepsilon$-equidistributed, i.e., the number of occurrences of every digit from $0$ to $b-1$ belongs to the interval $((1/b-\varepsilon)\log_b(\prod_{i=0,i\neq b-t}^{b-1}(i+t)^{n_i}),(1/b+\varepsilon)\log_b(\prod_{i=0,i\neq b-t}^{b-1}(i+t)^{n_i}))$. Hence, as $n \geq b^{\sum_{i=0}^{b-1}n_i-1}$, it follows that 		
		\begin{align*}
		\frac{f^2(n)}{n} &\leq \frac{1}{n} \cdot t^{n_{b-t}}\cdot(t\cdots (t+b-1))^{(\frac{1}{b}+\varepsilon)\log_b(\prod_{i=0,i\neq b-t}^{b-1}(i+t)^{n_i})}\\
		& = \frac{1}{n} \cdot b^{n_{b-t}\log_b{t}}\cdot\left(\frac{(b+t-1)!}{(t-1)!}\right)^{(\frac{1}{b}+\varepsilon)\sum_{i=0,i\neq b-t}^{b-1}{n_i}(\log_b(i+t))}\\
		& \leq \frac{1}{n} \cdot b^{n_{b-t}\log_b{t}}\left(\left(\frac{(b+t-1)!}{(t-1)!}\right)^{(\frac{1}{b}+\varepsilon)\log_b(b+t-1)}\right)^{\sum_{i=0,i\neq b-t}^{b-1}n_i}\\
		& \leq b \cdot \left(b^{\log_b{t}-1}\right)^{n_{b-t}}\left(\frac{1}{b}\cdot\left(\frac{(b+t-1)!}{(t-1)!}\right)^{(\frac{1}{b}+\varepsilon)\log_b(b+t-1)}\right)^{\sum_ {i=0,i\neq b-t}^{b-1}n_i}.\stepcounter{equation}\tag{\theequation}\label{eq:bound}
		\end{align*}
		
		The choice of $\varepsilon$ implies that the expression inside the second parenthesis in the last line of (\ref{eq:bound}) is $b^{\gamma}$ for some $\gamma < 0$, which completes the proof of this case, as $\log_b(t)-1<0$ and then (\ref{eq:bound}) is bounded from above by $b\cdot b^{\gamma'\sum_{i=0}^{b-1}n_i}\leq b\cdot n^{\gamma'}$ for some $\gamma' < 0$.
		
		Suppose now, on the other hand, that $n_i < M$ for every $i\neq b-t$. This implies, as $\sum_{i=0}^{b-1}n_i \geq 2(b-1)M$, that $n_{b-t}\geq (b-1)M\geq\sum_{i=0,i\neq b-t}^{b-1}n_i $, and hence $n_{b-t}\geq \log_b(n)/2 \geq \sum_{i=0,i\neq b-t}^{b-1}n_i$. Applying a trivial bound $f(m) \leq (b+t-1)^{1+\log_b{m}}$, we get that		
		\begin{align*}
		\frac{f^2(n)}{n}&=\frac{1}{n}\cdot t^{n_{b-t}}f\left(\prod_{i=0,i\neq b-t}^{b-1}(i+t)^{n_i}\right)\\
		& \leq  \frac{1}{n}\cdot b^{n_{b-t}\log_b{t}}\cdot (b+t-1)^{1+\sum_{i=0}^{b-1}\log_b(i+t)n_i}\\
		& \leq 2b^2\cdot b^{(\log_b{t}-1)n_{b-t}}\cdot \left(\frac{(b+t-1)^{\log_b(b+t-1)}}{b}\right)^{\sum_{i=0,i\neq b-t}^{b-1}n_i}\\
		& \leq 2b^2 \cdot \left(b^{\log_b(t)-1}\cdot\frac{(b+t-1)^{\log_b(b+t-1)}}{b}\right)^{\log_b(n)/2}\\
		&=2b^2\cdot n^\gamma,
		\end{align*}
		where, by the first item of Lemma \ref{lem:forthmtb/4}, $b^{\log_b(t)-1}\cdot\frac{(b+t-1)^{\log_b(b+t-1)}}{b}<1$, whence $\gamma < 0$. This concludes the proof.		
	\end{proof}
	
	The estimates in the proof of Theorem \ref{thm:b/4converges} can be applied asymptotically in $b$ (instead of for every $b \geq 4$) using Stirling's formula (instead of a precise inequality for every $n$)  to improve the constant $1/4$ in to approximately $0.316$ for large $b$. Namely, the following result, whose proof is very similar to the proof of Theorem \ref{thm:b/4converges} and will be omitted, holds:	
	\begin{theorem}\label{thm:tsmallconverges}
		Conjecture \ref{conj:uniformdistr} implies the following: let $c_0$ be the solution of $2\log(1+c)+c\log(1+1/c)=1$ in the interval $(0,1)$ ($c_0 \approx 0.315999$). Then, for every $c \leq c_0$, there is $b_0$ with the following property: for every prime number $b \geq b_0$ and $t \leq c b$, the sequence of iterates $(S_{t,b}^k(n))_{k \geq 1}$ stabilizes for every positive integer $n$.
	\end{theorem}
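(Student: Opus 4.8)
The plan is to rerun the argument of Theorem~\ref{thm:b/4converges} essentially word for word, replacing only the two elementary inequalities of Lemma~\ref{lem:forthmtb/4} --- which were engineered so as to hold for \emph{every} $b\geq 5$ --- by asymptotic counterparts that are weaker for small $b$ but sharp as $b\to\infty$ and hold as soon as $t/b\leq c<c_0$. As there, one may assume $t\geq 3$, the cases $t\in\{1,2\}$ being Theorems~\ref{thm:f1bconverges} and~\ref{thm:f23converges}; and the primality of $b$ is used exactly as before, to ensure that the only prime dividing $b$ is $b$ itself (so $F=\{p\text{ prime}:p\leq b+t-1,\ p\neq b\}$ is a nonempty set of primes coprime to $b$, to which Conjecture~\ref{conj:uniformdistr} applies with $q=b$) and that the power of $b$ dividing $f(n)=\prod_{i=0}^{b-1}(i+t)^{n_i}$ comes only from the single digit $b-t$.

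First I would fix $c<c_0$ and record that $\varphi(x):=2\log(1+x)+x\log(1+1/x)$ has $\varphi'(x)=\tfrac{1}{1+x}+\log(1+1/x)>0$ on $(0,\infty)$ and $\varphi(0^+)=0$; thus $\varphi$ is strictly increasing on $(0,1)$, so $c_0$ is the unique zero of $\varphi(x)-1$ there and $\varphi(t/b)\leq\varphi(c)=:1-\eta$ with $\eta>0$ for every $t\leq cb$. The crux is the following consequence of Stirling's formula: summing $\log j$ over $t\leq j\leq b+t-1$ and comparing with $\int_t^{b+t}\log x\,dx$ gives, writing $\tau=t/b$,
\begin{equation*}
\log\!\left(\tfrac{(b+t-1)!}{(t-1)!}\right)=b\log b+bA+O(\log b),\qquad A=(1+\tau)\log(1+\tau)-\tau\log\tau-1 ,
\end{equation*}
and, since $\log_b(b+t-1)=1+\tfrac{\log(1+\tau)}{\log b}+O\!\left(\tfrac{1}{b\log b}\right)$ while $A+\log(1+\tau)=\varphi(\tau)-1$,
\begin{equation*}
\tfrac1b\,\log_b(b+t-1)\cdot\log_b\!\left(\tfrac{(b+t-1)!}{(t-1)!}\right)=1+\tfrac{\varphi(\tau)-1}{\log b}+o\!\left(\tfrac{1}{\log b}\right),
\end{equation*}
with the error term $o(1/\log b)$ uniform in $t\leq cb$. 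Since the right-hand side is $\leq 1-\tfrac{\eta}{\log b}+o(1/\log b)<1$ once $b$ is large, there is $b_0=b_0(c)$ such that $\bigl(\tfrac{(b+t-1)!}{(t-1)!}\bigr)^{\frac1b\log_b(b+t-1)}<b$ for all primes $b\geq b_0$ and all $3\leq t\leq cb$; and, for each such $b$ and $t$, by continuity the same holds with exponent $(\tfrac1b+\varepsilon)\log_b(b+t-1)$ for a small enough $\varepsilon=\varepsilon(b,t)>0$ --- this is the substitute for part~(ii) of Lemma~\ref{lem:forthmtb/4}. For part~(i), taking logarithms to base $b$ turns $b^{\log_b t-1}(b+t-1)^{\log_b(b+t-1)}/b<1$ into $\log_b t-2+(\log_b(b+t-1))^2<0$, whose left-hand side equals $\tfrac{\log(\tau(1+\tau)^2)}{\log b}+o\!\left(\tfrac{1}{\log b}\right)$; as $\tau(1+\tau)^2\leq c_0(1+c_0)^2<1$ for all $\tau\leq c<c_0$, this is negative once $b\geq b_0$ (enlarging $b_0$ if necessary).

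With these two inequalities available, the rest is verbatim from Theorem~\ref{thm:b/4converges}: apply Conjecture~\ref{conj:uniformdistr} with $q=b$, $a=1$ and $F$ as above to get, for each $\varepsilon>0$, an $N$ beyond which $\prod p_i^{a_i}$ is $\varepsilon$-equidistributed whenever some $a_i\geq N$; write $f(n)=b^{n_{b-t}}\prod_{i\neq b-t}(i+t)^{n_i}$, whence $f^2(n)=t^{n_{b-t}}f\!\bigl(\prod_{i\neq b-t}(i+t)^{n_i}\bigr)$; and split into the case that some $n_i$ with $i\neq b-t$ is $\geq M$ --- where $\prod_{i\neq b-t}(i+t)^{n_i}$ is $\varepsilon$-equidistributed and the bound \eqref{eq:bound}, now read with the asymptotic form of part~(ii), yields $f^2(n)\leq b\cdot n^{c_b}$ for some $c_b<1$ --- and the case that all such $n_i$ are $<M$ --- where $n_{b-t}\geq\log_b(n)/2\geq\sum_{i\neq b-t}n_i$, and the trivial bound $f(m)\leq(b+t-1)^{1+\log_b m}$ with the asymptotic form of part~(i) yields $\tfrac{f^2(n)}{n}\leq 2b^2\cdot n^{\gamma}$ for some $\gamma<0$. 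Either way $f^j(n)\leq C\cdot n^{c'}$ for some $j\in\{1,2\}$, constant $C$, and $c'<1$ once $n$ is large, and one concludes as in Theorem~\ref{thm:b/4converges} that every orbit stabilizes.

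The only work beyond Theorem~\ref{thm:b/4converges} is the first step, and that is where the (mild) obstacle lies. One must arrange the Stirling expansion so that the ``bad'' factor $\tfrac1b\bigl(\tfrac{(b+t-1)!}{(t-1)!}\bigr)^{(1/b+\varepsilon)\log_b(b+t-1)}$ in \eqref{eq:bound} is displayed as $1$ plus a term governed by $\varphi(\tau)-1$, and then check that all error terms --- from Stirling ($O(\log b)$ inside $\log$, hence $O(1)$ inside $\log_b$), from $\log_b(b+t-1)-1=O(1/\log b)$, and from the perturbation by $\varepsilon$ --- are $o(1/\log b)$ uniformly in $t\leq cb$, so that the slack $\eta=1-\varphi(c)>0$ (which exists precisely because $c<c_0$) absorbs them for all primes $b\geq b_0(c)$. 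The monotonicity of $\varphi$ on $(0,1)$ is what both makes $c_0$ well-defined and reduces the uniform-in-$t$ statement to its worst case $\tau\approx c$; everything else is the bookkeeping already carried out in Theorem~\ref{thm:b/4converges}.
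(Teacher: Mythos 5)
Your proposal is correct and is precisely the argument the paper has in mind (the paper omits this proof, saying only that one reruns Theorem \ref{thm:b/4converges} with Stirling's formula in place of the uniform-in-$b$ inequalities of Lemma \ref{lem:forthmtb/4}): your expansion correctly isolates $\varphi(\tau)-1=2\log(1+\tau)+\tau\log(1+1/\tau)-1$ as the coefficient of $1/\log b$ in the analogue of part (ii), which is exactly what singles out $c_0$, while the analogue of part (i) is non-binding since $\tau(1+\tau)^2\leq c_0(1+c_0)^2<1$. The one caveat is that this asymptotic argument genuinely requires the strict bound $c<c_0$ (so that the slack $\eta=1-\varphi(c)>0$ absorbs the $o(1/\log b)$ error terms uniformly over $t\leq cb$), which you correctly impose but which means the endpoint $c=c_0$ appearing in the theorem's statement is not literally covered by this (or, presumably, the authors' intended) proof.
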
	
	\begin{remark}
	What we stated in 
		Remark \ref{rem:loglog} holds for Theorems \ref{thm:b/4converges} and \ref{thm:tsmallconverges} as well, i.e., the persistence is bounded by $c\log\log{n}$ for every $n$ and some $c$, and there is an increasing sequence of integers $(n_k)_{k\geq 1}$ with persistence at least $c'\log\log{n_k}$ for some $c'>0$ and every~$k$.
	\end{remark}
	
	On the other end of the spectrum, we have a divergence result, which we state now, after a technical lemma.	
	\begin{lemma}\label{lem:forthmtlarge} Let $c_0$ be the solution $2\log{c}+\log(c+1)+c\log(1+1/c) = 1$ in the interval $(0,1)$ ($c_0 \approx 0.865722$). Then, for every $c > c_0$, there is $b_0$ with the following property: Let $t$ and $b$ be integers, with $b \geq b_0$ and $t \geq cb$. If we put $\delta = 1/b-1/b^2$, then the following inequalities hold:
		\leavevmode
		\begin{enumerate}[(i)]
			\item $\left(\frac{(t-1+b)!}{(t-1)!}\right)^{\delta\log_b{t}} > b;$
			
			\item $t^{\delta\log_b{t}} \cdot \left(\frac{(t-1+b)!}{(t-1)!}\right)^{(b-2)\delta^2(\log_b{t})^2} > b.$
		\end{enumerate}
	\end{lemma}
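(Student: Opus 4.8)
The plan is to remove the dependence on $t$ by an elementary monotonicity observation, and then to expand everything asymptotically in $b$ using Stirling's formula, so that each of (i) and (ii) is reduced to the positivity of an explicit concave function of $c$. First, note that for a fixed base $b$ both left-hand sides are nondecreasing in $t$: the quantity $\frac{(t-1+b)!}{(t-1)!}=\prod_{j=0}^{b-1}(t+j)$ increases with $t$, and so do $\log_b t$ and $t^{\delta\log_b t}$ (both base and exponent grow). Hence it suffices to prove (i) and (ii) for the smallest admissible value $t=\lceil cb\rceil$, for which $\tau:=t/b\to c$ as $b\to\infty$ and $\tau$ stays in a fixed compact interval.

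The only analytic input is the estimate, obtained from $\log\Gamma(x)=x\log x-x+O(\log x)$ (equivalently, by comparing $\sum_{j=0}^{b-1}\log(t+j)$ with $\int_{t-1}^{t+b-1}\log$),
\[
\log\frac{(t-1+b)!}{(t-1)!}=b\log b+b\,A(\tau)+O(\log b),\qquad A(\tau):=(\tau+1)\log(\tau+1)-\tau\log\tau-1,
\]
valid uniformly for $\tau$ in any fixed compact subinterval of $(0,\infty)$; together with $\log_b t=1+\frac{\log\tau}{\log b}$ and $\delta=\frac{b-1}{b^2}$, this is everything we need.

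Taking logarithms, (i) reads $\delta(\log_b t)\,\log\frac{(t-1+b)!}{(t-1)!}>\log b$. Substituting the expansion and collecting terms, the left-hand side equals $\log b+\bigl(\log c+A(c)\bigr)+o(1)$ as $b\to\infty$; writing $H(c):=\log c+\log(c+1)+c\log(1+1/c)$, one checks $\log c+A(c)=H(c)-1$, that $H'(c)=\tfrac1c+\log(1+1/c)>0$, and that $H(c_0)=1-\log c_0>1$ since $0<c_0<1$, so $H(c)-1\geq-\log c_0>0$ for all $c\geq c_0$ and (i) follows for $b$ large. For (ii), taking logarithms gives $\delta(\log_b t)\log t+(b-2)\delta^2(\log_b t)^2\log\frac{(t-1+b)!}{(t-1)!}>\log b$; the first summand is $\delta(\log t)^2/\log b=O(\tfrac{\log b}{b})=o(1)$, while in the second we have $(b-2)\delta^2(\log_b t)^2=\tfrac{(b-2)(b-1)^2}{b^4}\bigl(1+\tfrac{\log\tau}{\log b}\bigr)^2$, so that multiplying by the Stirling expansion the $\log b$ term survives with remaining constant $2\log c+A(c)$. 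Thus the left-hand side of (ii) equals $\log b+\bigl(2\log c+A(c)\bigr)+o(1)$; since $2\log c+A(c)=2\log c+\log(c+1)+c\log(1+1/c)-1$ vanishes at $c_0$ by definition and has positive derivative $\tfrac2c+\log(1+1/c)$, it is strictly positive for every $c>c_0$, and choosing $b_0=b_0(c)$ so large that the $o(1)$ error is dominated by this constant yields (ii).

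The main obstacle is purely bookkeeping: one must make the $O(\cdot)$ and $o(1)$ terms in the two displayed asymptotics fully explicit and uniform in $\tau$ so as to exhibit a concrete threshold $b_0(c)$ — precisely the role played by the finite inequalities of Lemmas \ref{lem:forthmt2blarge} and \ref{lem:forthmtb/4}, but now with Stirling's formula in place of elementary estimates. The one delicate point is that (ii) is an \emph{equality} at $c=c_0$ (that is how $c_0$ is defined), so the leading $\log b$ terms on the two sides must be shown to cancel exactly and the sign is then governed by the $O(1)$ constant $2\log c+A(c)$; this is why $b_0$ is allowed to depend on $c$ and must blow up as $c\downarrow c_0$, whereas for (i) a single $b_0$ works for all $c\geq c_0$.
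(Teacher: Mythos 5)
Your proof is correct and takes essentially the same route as the paper's: both apply Stirling's formula to reduce (i) and (ii) to the positivity of $\log c+\log(c+1)+c\log(1+1/c)-1$ and $2\log c+\log(c+1)+c\log(1+1/c)-1$ respectively, with the latter vanishing exactly at $c_0$. Your preliminary monotonicity-in-$t$ reduction to $t=\lceil cb\rceil$ and the repackaging via $A(\tau)$ and $H(c)$ are only organizational differences from the paper's direct term-by-term bounds using $t\geq cb$.
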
	
	\begin{proof}
		We will prove that the logarithm to base $b$ of the two expressions on the left-hand sides of \textit{i)} and \textit{ii)} are greater than $1$ for large $b$. For that purpose, we use the following logarithmic form of the well-known Stirling approximation: $$\displaystyle \log{n!} = n\log{n}-n+O(\log{n}).$$ 
		
		\begin{enumerate}[(i)] 
			\item We have 
			\begin{align*}
			\log_b&\left(\frac{(t-1+b)!}{(t-1)!}\right)^{\delta\log_b{t}} =\\ &= \delta\frac{\log{t}}{(\log{b})^2} \cdot\big((t-1+b)\log(t-1+b)-(t-1)\log(t-1)\\
			&\quad-b+O(\log{b})\big) \\
			& = \delta\frac{\log{t}}{(\log{b})^2} \cdot(b\log(t-1+b)+(t-1)\log(1+b/(t-1))\\
			&\quad-b+O(\log{b}))\\
			&\geq \delta\frac{\log{cb}}{(\log{b})^2}\cdot(b\log((c+1)b)+cb\log(1+1/c)-b+O(\log{b}))\\
			&\geq \delta b\left(1+\frac{\log{c}+\log(c+1)+c\log(1+1/c)-1}{\log{b}}+O\left(\frac{1}{b\log{b}}\right)\right)\\
			&\geq 1+\frac{\log{c}+\log(c+1)+c\log(1+1/c)-1}{\log{b}}+O\left(\frac{1}{b}\right),
			\end{align*}
			since $\delta b=1-1/b$. This expression is greater than $1$ for large $b$ as we have $\log{c}+\log(c+1)+c\log(1+1/c)-1> \log{c_0}+\log(c_0+1)+c_0\log(1+1/c_0)-1 > 2\log{c_0}+\log(c_0+1)+c_0\log(1+1/c_0)-1 = 0$.
			
			\item We apply the bound obtained in the first part of the proof to get
			\begin{align*}
			\log_b&\left(t^{\delta\log_b{t}} \cdot \left(\frac{(t-1+b)!}{(t-1)!}\right)^{(b-2)\delta^2(\log_b{t})^2}\right) \\ &\quad= \delta\left(\frac{\log{t}}{\log{b}}\right)^2+(b-2)\delta\frac{\log{t}}{\log{b}}\log_ b\left(\frac{(t-1+b)!}{(t-1)!}\right)^{\delta\log_b{t}}\\
			&\quad\geq \left(\frac{1}{b}-\frac{1}{b^2}\right)\left(1+\frac{\log{c}}{\log{b}}\right)^2+\left(1-\frac{2}{b}\right)\left(1+\frac{1}{b}\right)\left(1+\frac{\log{c}}{\log{b}}\right)\cdot\\
			&\quad\quad\cdot\left(1+\frac{\log{c}+\log(c+1)+c\log(1+1/c)-1}{\log{b}}+O\left(\frac{1}{b}\right)\right)\\
			&\quad=1+\frac{2\log{c}+\log(c+1)+c\log(1+1/c)-1}{\log{b}}+O\left(\frac{1}{b}\right),
			\end{align*}
			which is greater than $1$ for large $b$ since $2\log{c}+\log(c+1)+c\log(1+1/c)-1> 2\log{c_0}+\log(c_0+1)+c_0\log(1+1/c_0)-1 = 0$.
		\end{enumerate}
	\end{proof}	
	\begin{theorem}\label{thm:divergencelarge}
		Conjecture \ref{conj:uniformdistr} implies the following: let $c_0$ be the solution $2\log{c}+\log(c+1)+c\log(1+1/c) = 1$ in the interval $(0,1)$ ($c_0 \approx 0.865722$). Then, for every $c > c_0$, there is $b_0$ with the following property: for every prime number $b \geq b_0$ and positive integer $t \geq c b$, the sequence of iterates $(S_{t,b}^k(n))_{k \geq 1}$ diverges for every sufficiently large integer $n$.
	\end{theorem}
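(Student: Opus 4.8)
\emph{Proof proposal.} The plan is to imitate the divergence argument of Theorem~\ref{thm:45diverges} (and the ``larger bases'' part of Theorem~\ref{thm:b/4converges}) and show that, once $b$ is large enough and $n$ is sufficiently large, $S_{t,b}^3(n)>n$; this clearly forces divergence. Write $f=S_{t,b}$ and note that we may assume $c<1$ and $cb\le t\le b-1$, since $t\ge b$ gives $f(n)\ge S_{b,b}(n)>n$ trivially. Because $b$ is prime and $t\le b-1$, the integer $b$ is the unique multiple of $b$ among $t,t+1,\dots,t+b-1$, and $\gcd(t,b)=1$. Hence, writing $f^k(n)=b^{z_k}w_k$ with $\gcd(w_k,b)=1$ for $k\ge1$: the digit $b-t$ is the only one contributing a factor $b$ to $f(w_k)$, and the $z_k$ trailing zeros of $b^{z_k}w_k$ each contribute a factor $t$, which together with the fact that $t^{z_k}$ and $\prod_{i\ne b-t}(i+t)^{\#i(w_k)}$ are coprime to $b$ gives the clean recursion
\[
z_{k+1}=\#(b-t)(w_k),\qquad w_{k+1}=t^{z_k}\prod_{i\ne b-t}(i+t)^{\#i(w_k)},
\]
with $w_1=\prod_{i\ne b-t}(i+t)^{\#i(n)}$ and $z_1=\#(b-t)(n)$. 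Every $w_k$ ($k\ge1$) is then a product of the (fewer than $2b$) primes $\le t+b-1$ distinct from $b$.

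Next I would apply Conjecture~\ref{conj:uniformdistr} \emph{once}, with $q=b$, $a=1$, $F=\{p\text{ prime}:p\le t+b-1,\ p\ne b\}$ (a non-empty set containing none of the primes dividing $b$) and $\varepsilon=1/b^2$, so that $\delta:=1/b-\varepsilon=1/b-1/b^2$ matches Lemma~\ref{lem:forthmtlarge}: there is $N$ such that any such $w_k$ with a prime exponent $\ge N$ is $\varepsilon$-equidistributed, whence $\#i(w_k)>\delta\,\#(w_k)_b$ for every digit $i$. Since $w_k<((2b)!)^N$ when all its exponents are $<N$, there is a threshold $T=O_b(N)$ such that $\log_b w_k\ge T$ alone forces $w_k$ to be equidistributed. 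Now fix $b_0$ so large that both inequalities of Lemma~\ref{lem:forthmtlarge} hold for all primes $b\ge b_0$ and all $t\ge cb$, choose $M$ large in terms of $b$ and $N$ (large enough for all the crude inequalities below), and set $n_0=b^{M^2+bM}$. For $n\ge n_0$ I split into two cases according to whether some digit $\ne b-t$ occurs at least $M$ times in $n$.

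In the first case $w_1$ already has a prime exponent $\ge M\ge N$, hence is equidistributed; pushing the recursion two more steps (using $\#(w)_b\ge\log_b w$, $i+t\ge t$, and checking that $\log_b w_2$ clears the threshold $T$) yields an estimate of the shape
\[
\log_b S_{t,b}^3(n)\ \ge\ A\cdot\#(b-t)(n)\ +\ B\cdot\!\!\sum_{i\ne b-t}\!\!\#i(n),
\]
where $A=\delta(\log_b t)\log_b\tfrac{(t+b-1)!}{(t-1)!}$ and $B=\delta(\log_b t)^2+\delta^2(\log_b t)\log_b\tfrac{(t+b-1)!}{b(t-1)!}\log_b\tfrac{(t+b-1)!}{(t-1)!}$. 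Item~(i) of Lemma~\ref{lem:forthmtlarge} gives $A>1$, and since $\log_b\tfrac{(t+b-1)!}{b(t-1)!}=\sum_{i\ne b-t}\log_b(i+t)\ge(b-1)\log_b t>(b-2)\log_b t$, item~(ii) gives $B>1$; as $\#(b-t)(n)+\sum_{i\ne b-t}\#i(n)=\#(n)_b>\log_b n$, this yields $S_{t,b}^3(n)>n$. In the second case all digits $\ne b-t$ are rare, so $P:=\#(b-t)(n)>M^2$ dominates: $w_1$ is bounded in terms of $b,M$, $f(n)$ is essentially $b^{P}$, and $f^2(n)=t^{P}\cdot(\text{bounded})$ is a huge power of a prime dividing $t$, so $w_2$ is equidistributed and $\log_b S_{t,b}^3(n)\ge\log_b f(w_2)>\delta P(\log_b t)\log_b\tfrac{(t+b-1)!}{(t-1)!}=A\cdot P$, which again beats $\#(n)_b\le P+(b-1)M$ by item~(i) because $P>M^2$ swamps the $\le(b-1)M$ other digits.

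The main obstacle is conceptual and is exactly what the technical lemma isolates: the ``bad'' digit $b-t$ maps to $b$ and then collapses, each collapse trading a factor $b$ for only a factor $t<b$, so a \emph{single} iterate of $f$ can shrink $n$; one needs three iterates for the multiplicative growth produced by equidistribution---a factor of order $\bigl(\tfrac{(t+b-1)!}{(t-1)!}\bigr)^{1/b}$ per digit---to overcome the collapse, and $c_0$ is precisely the value of $t/b$ at which the resulting inequalities $A>1$, $B>1$ first hold for all large $b$. Assembling the two cases, the equidistribution ``triggers'' (translating ``$\log_b w_k$ large'' into ``some prime exponent $\ge N$''), and the constants $M$, $n_0$ into a coherent argument is the bulk of the routine work; establishing Lemma~\ref{lem:forthmtlarge} via Stirling's formula---already carried out above---is what pins down $c_0$.
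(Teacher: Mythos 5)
Your proposal is correct and follows essentially the same route as the paper's own proof: the same decomposition $f^k(n)=b^{z_k}w_k$ with $b$ prime forcing the clean recursion, the same single application of Conjecture~\ref{conj:uniformdistr} with $\varepsilon=1/b^2$, the same case split on whether some digit other than $b-t$ occurs at least $M$ times, and the same use of items (i) and (ii) of Lemma~\ref{lem:forthmtlarge} to get $f^3(n)>n$. The only cosmetic difference is that you package the final estimate as $\log_b f^3(n)\ge A\,\#(b-t)(n)+B\sum_{i\ne b-t}\#i(n)$ with $A,B>1$, whereas the paper writes the equivalent bound $f^3(n)>b^{\sum n_i}\ge n$ directly.
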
	
	\begin{proof}
		Fix $c>c_0$. Let $b_0$ be the integer given by Lemma \ref{lem:forthmtlarge} for this $c$, and let $t$, $b$ be integers such that $t \geq cb$ and $b \geq b_0$.
		
		We apply Conjecture \ref{conj:uniformdistr} with $q=b$, $a=1$, $F=\{p \text{ prime}:p \leq b+t-1, p \neq b\}$ to get the following statement: 
		for every $\varepsilon > 0$, there is $N$ such that the number $\prod_{p_i \text{ prime}:p_i \leq b+t-1, p_i \neq b}p_i^{a_i}$ is $\varepsilon$-equidistributed if any of the $a_i$ is at least $N$.
		
		Put $f=S_{t,b}$. We can write $f(n)= \prod_{i=0}^{b-1} (i+t)^{n_i} = b^{n_{b-t}} \cdot \prod_{i=0, i \neq b-t}^{b-1} (i+t)^{n_i}$. As $b$ is a prime number, $b \nmid \prod_{i=0, i \neq b-t}^{b-1} (i+t)^{n_i}$, and then $f^2(n) = t^{n_{b-t}}\cdot f(\prod_{i=0, i \neq b-t}^{b-1} (i+t)^{n_i})$. Let $n'_i$ denote the number of digits $i$ in $\prod_{i=0, i \neq b-t}^{b-1} (i+t)^{n_i}$. Then we may rewrite $f^2(n) = t^{n_{b-t}}\cdot \prod_{i=0}^{b-1}(i+t)^{n'_i}=b^{n'_{b-t}}\cdot t^{n_{b-t}} \cdot \prod_{i=0,i\neq b-t}^{b-1} (i+t)^{n'_{i}}$.
		
		Let $\varepsilon = 1/b^2$ and put $\delta=1/b-\varepsilon$. Let $M \geq N/(\delta \log_b{2})$, where $N$ is the integer given by the application of Conjecture \ref{conj:uniformdistr} as above with $\varepsilon = 1/b^2$, and $M$ is large enough as to satisfy that (\ref{eq:condition}) is greater than $1$. Let $n \geq b^{(b-1)M+M^2}$ be an integer. We will prove that $f^3(n) > n$, which implies that the sequence of iterates of $f$ starting from any integer at least $b^{(b-1)M+M^2}$ diverges.		
		
		Either $n_i \geq M$ for some $i \neq b-t$ or $n_{b-t} \geq M^2 \geq M \geq N$. In the first case, this implies that $n'_i \geq \delta\log_b(\prod_{i=0, i \neq b-t}^{b-1} (i+t)^{n_i}) \geq (\delta \log_b{2})\sum_{i=0, i \neq b-t}^{b-1} n_i \geq (\delta \log_b{2})M \geq N$. In any case, the number $t^{n_{b-t}} \cdot \prod_{i=0,i\neq b-t} (i+t)^{n'_{i}}$ is $\varepsilon$-equidistributed. This means that every digit from $0$ to $b-1$ appears at least $\delta\log_b(t^{n_{b-t}} \cdot \prod_{i=0,i\neq b-t} (i+t)^{n'_{i}}) = \delta(n_{b-t}\log_b{t}+\sum_{i=0,i=b-t}^{b-1}n'_i\log_b(i+t))$ times in this number, and hence
		\begin{align*}
		f^3(n) & = f\bigg(b^{n'_{b-t}}\cdot t^{n_{b-t}} \cdot \prod_{i=0,i\neq b-t}^{b-1} (i+t)^{n'_{i}}\bigg)\\
		& = t^{n'_{b-t}}\cdot f\bigg(t^{n_{b-t}} \cdot \prod_{i=0,i\neq b-t}^{b-1} (i+t)^{n'_{i}}\bigg)\\
		&\geq t^{n'_{b-t}}(t(t+1)\dots(b+t-1))^{\delta(n_{b-t}\log_b{t}+\sum_{i=0,i \neq b-t}^{b-1}n'_i\log_b(i+t))}\\
		&\geq t^{n'_{b-t}}\cdot\left(\frac{(t-1+b)!}{(t-1)!}\right)^{\delta(n_{b-t}+\sum_{i=0,i\neq b-t}^{b-1}n'_i)\log_b{t}}\tag{\theequation}\label{eq:lowerboundf3}.
		\end{align*}
		
		Suppose first that  $n_i \geq M$ for some $i \neq b-t$. Then $\prod_{i=0, i \neq b-t}^{b-1} (i+t)^{n_i}$ is $\varepsilon$-equidistributed, which implies that $n'_i \geq \delta\log_b(\prod_{i=0, i \neq b-t}^{b-1} (i+t)^{n_i}) \geq \delta\log_b{t}\sum_{i=0, i \neq b-t}^{b-1} n_i$ for every $1 \leq i \leq b-1$. In this case, bound (\ref{eq:lowerboundf3}) implies, together with Lemma \ref{lem:forthmtlarge}, that
		
		\begin{align*}
		f^3(n) &\geq\left(t^{\delta\log_b{t}} \cdot \left(\frac{(t-1+b)!}{(t-1)!}\right)^{(b-2)\delta^2(\log_b{t})^2}\right)^{\sum_{i=0,i\neq b-t}^{b-1}n_i}\cdot \left(\frac{(t-1+b)!}{(t-1)!}\right)^{(\delta\log_b{t}) n_{b-t}}\\
		& > b^{\sum_{i=0,i\neq b-t}^{b-1}n_i+n_{b-t}}\\
		& \geq n.
		\end{align*}
		
		Finally, if $n_i < M$ for every $i \neq b-t$, then $n_{b-t} \geq M^2$. Moreover, we have $n'_i \leq \log_b(\prod_{i=0, i \neq b-t}^{b-1} (i+t)^{n_i}) < b\log_b(2b)M$ for every $0 \leq i \leq b-1$, and then, using (\ref{eq:lowerboundf3}), we get that
		\begin{align*}
		\frac{f^3(n)}{n} &\geq \frac{1}{n} \cdot t^{n'_{b-t}}\cdot\left(\frac{(t-1+b)!}{(t-1)!}\right)^{\delta(n_{b-t}+\sum_{i=0,i\neq b-t}^{b-1}n'_i)\log_b{t}} \\
		& \geq \frac{1}{n}\cdot \left(\frac{(t-1+b)!}{(t-1)!}\right)^{(\delta\log_b{t})n_{b-t}}\\
		& > \left(\frac{1}{b}\cdot\left(\frac{(t-1+b)!}{(t-1)!}\right)^{\delta\log_b{t}}\right)^{n_{b-t}}\cdot b^{-\sum_{i=0,i\neq b-t}^{b-1}n_i}\\
		& \geq \left(\frac{1}{b}\cdot\left(\frac{(t-1+b)!}{(t-1)!}\right)^{\delta\log_b{t}}\right)^{M^2} \cdot b^{-bM} \stepcounter{equation}\tag{\theequation}\label{eq:condition} \\
		&> 1,
		\end{align*}
		if $M$ is large enough (since, by Lemma \ref{lem:forthmtlarge}, the base of $M^2$ in the last line of (\ref{eq:condition}) is greater than $1$). This concludes the proof.
		
	\end{proof}	
	\begin{remark}
		As we saw before (in Theorems \ref{thm:b/4converges} and \ref{thm:tsmallconverges}), if one applies bounds for $n!$ that hold for every $n$ instead of the asymptotic Stirling formula, it is possible to get a result of the following form, with $c$ being a constant greater than $c_0$ in Theorem \ref{thm:divergencelarge}: let $b \geq 7$ be a prime and $t \geq cb$. Conjecture \ref{conj:uniformdistr} implies that, for every sufficiently large integer $n$, the sequence of iterates $(S_{t,b}^k(n))_{k\geq 1}$ diverges to infinity.
	\end{remark}

	\section{Concluding remarks}
	
	As we made clear from the beginning, most of the main results in the present paper (Theorems \ref{thm:erdosbase3}, \ref{thm:t1b3pers}, \ref{thm:f23converges}, \ref{thm:45diverges}, \ref{thm:b/4converges} and \ref{thm:divergencelarge}) are conditional on the validity of Conjecture \ref{conj:equidist} or \ref{conj:uniformdistr}. There is ample experimental evidence in favor of Conjecture \ref{conj:equidist}, and further support would be provided if one could prove our main results unconditionally. Conversely, of course, if any one of these unconditional statements were proved to be false, then Conjecture \ref{conj:equidist} (or \ref{conj:uniformdistr}) would have to be false. However, given the computational evidence and robust heuristics available \cite{de2014sloane}, we feel confident that Conjectures \ref{conj:equidist} and  \ref{conj:uniformdistr} must be true.
	
	\section{Acknowledgments}
	
	We wish to thank the referee for several remarks and suggestions that led to a considerable improvement of our exposition.
	
	\bibliographystyle{jis}
	\bibliography{ref}
	
	\bigskip
	\hrule
	\bigskip
	
	\noindent 2010 {\it Mathematics Subject Classification}:
	Primary 	11A63; Secondary 11A67.

	\noindent \emph{Keywords: }persistence of a number, Sloane's conjecture, products of digits, shifted Sloane problem, equidistribution of digits.
	
	\bigskip
	\hrule
	\bigskip
	
	\noindent (Concerned with sequences
	\seqnum{A036461},
	\seqnum{A335808},
	\seqnum{A335824}, and
	\seqnum{A031346}.
	)
	
	\bigskip
	\hrule
	\bigskip

	\noindent
	Return to
	\htmladdnormallink{Journal of Integer Sequences home page}{http://www.cs.uwaterloo.ca/journals/JIS/}.
	\vskip .1in

\end{document}